\documentclass[11pt,twoside,a4paper]{article}  
\usepackage{amssymb,amsfonts,amsmath,amsbsy,theorem,amscd,dina42e,verbatim,hyperref}
\usepackage[T1]{fontenc}
\usepackage{color}
\pagestyle{headings}
\newcommand{\dd}{\text{\it \dj\hspace{1pt}}}

\newcommand{\ol}[1]{\overline{#1}}

\numberwithin{equation}{section}
\newcommand{\C}{\ensuremath{\mathbb{C}}}
\newcommand{\R}{\ensuremath{\mathbb{R}}}

\newcommand{\Rn}{\ensuremath{\mathbb{R}^n}}

\newcommand{\N}{\ensuremath{\mathbb{N}}}

\newcommand{\F}{\ensuremath{\mathcal{F}}}

\newcommand{\dist}{\operatorname{dist}}

\newcommand{\supp}{\operatorname{supp}}
\newcommand{\eps}{\ensuremath{\varepsilon}}

\newcommand{\habil}[1]{}
\newtheorem{thm}{Theorem}[section]
\newtheorem{cor}[thm]{Corollary}

\newtheorem{lem}[thm]{Lemma}

\newtheorem{defn}[thm]{Definition}

\newtheorem{prop}[thm]{Proposition}

\newtheorem{hypo}[thm]{Hypothesis}
\newtheorem{claim*}{Claim}

\theorembodyfont{\rmfamily}
\newtheorem{rem}[thm]{Remark}


\newtheorem{example}[thm]{Example}
\newenvironment{proof*}[1]{\noindent{\bf Proof
#1:}}{\hspace*{\fill}\rule{1.2ex}{1.2ex}\\ }
\newenvironment{proof}{\noindent{\bf
Proof:\,}}{\hspace*{\fill}\rule{1.2ex}{1.2ex}\\ }




\newcommand{\crp}{\overline{\mathbb R}_+}

\newcommand{\rn}{{\mathbb R}^n}
\newcommand{\rnp}{{\mathbb R}^n_+}

\newcommand{\rnpm}{\mathbb R^n_\pm}
\newcommand{\crnp}{\overline{\mathbb R}^n_+}

\newcommand{\crnpm}{\overline{\mathbb R}^n_\pm}
\newcommand{\comega}{\overline\Omega }

\newcommand{\ang}[1]{\langle {#1} \rangle}
\newcommand{\Op}{\operatorname{OP}}

\newcommand{\simto}{\overset\sim\rightarrow}

\newcommand{\rp}{ \mathbb R_+}

\begin{document}
\begin{titlepage}
\title{Maximal $L_p$-regularity for $x$-dependent fractional heat equations
  with Dirichlet conditions} 
\author{Helmut~Abels\footnote{Fakult\"at f\"ur Mathematik,  
Universit\"at Regensburg,  
93040 Regensburg, Germany, E-mail {\tt helmut.abels@ur.de}}~ and Gerd~Grubb\footnote{Department of Mathematical Sciences,
Copenhagen University,
Universitetsparken 5,
 DK-2100 Copenhagen, Denmark.
E-mail {\tt grubb@math.ku.dk} 
}
}
\end{titlepage}
\maketitle

\begin{abstract}
We prove optimal regularity results in $L_p$-based function spaces in
space and time for a large class of linear parabolic equations with a
nonlocal elliptic operator in bounded domains with limited
smoothness. Here the nonlocal operator is given by a strongly elliptic
and even pseudodifferential operator $P$ of order $2a$ ($0<a<1$) with
nonsmooth $x$-dependent coefficients.
 {This includes the prominent case of the fractional Laplacian
  $(-\Delta )^a$, as well as elliptic operators $(-\nabla \cdot
  A(x)\nabla +b(x))^a$}. The proofs are based on general
results on maximal $L_p$-regularity and its relation to
$\mathcal{R}$-boundedness of the resolvent of the associated
(elliptic) operator. Finally, we apply these results to show existence
of strong solutions locally in time for a class of nonlinear nonlocal
parabolic equations, which include a fractional nonlinear diffusion
equation and a fractional porous medium equation after a
transformation. The nonlinear results are new in the case of domains with boundary; the linear results are so when $P$ is $x$-dependent
  nonsymmetric.
 \end{abstract}

 \noindent
 {\bf Key words:} Fractional Laplacian; even pseudodifferential
 operator;
  Dirichlet problem;
  nonsmooth coefficients;  maximal regularity;
  nonlinear nonlocal parabolic equations; fractional porous medium equation.  \\
 {\bf MSC (2020):}  Primary: 35S15, 35R11, Secondary: 35K61, 35S16, 47G30, 60G52

\section{Introduction}

 {The present paper studies the heat equation for  a nonlocal
operator $P$  of order $2a\in(0,2)$ (strongly elliptic and even),
\begin{equation}  \label{eq:1.1} 
\begin{split}
\partial_tu+Pu&=f\quad \text{ on }\Omega \times I ,\ I=\,(0,T)\,,\\
u&=0\quad \text{ on }(\mathbb R^n\setminus\Omega )\times I, \\
u|_{t=0}&=0.    
\end{split}  
\end{equation}

Linear operators $P$ of fractional order, such as the fractional Laplacian
$(-\Delta )^a$ and its generalizations, have been much in
focus in recent years, both in Analysis and in Probability and Finance. In contrast to
differential operators (always of integer order) they are
{\em nonlocal} (do not preserve the support of a function), which makes
them more difficult to handle. There are  
generally two types of definitions that are used. One is the
definition as a {\em singular integral operator}
\begin{equation}
  Pu(x)=PV\int_{\rn}(u(x)-u(x+y))K(y)\,dy,\label{eq:1.1a}
\end{equation}
where the kernel function $K(y)$ for $(-\Delta )^a$ equals
$c|y|^{-n-2a}$; they are generators of L\'evy processes. The other is
the definition as a {\em pseudodifferential
operator}
\begin{equation}
  Pu(x)=\mathcal
F^{-1}_{\xi \to x}\bigl(p(x,\xi )(\mathcal F u)(\xi )\bigr)=\operatorname{OP}(p)u(x),\label{eq:1.1b}
\end{equation}
where  $\mathcal F$ stands for
the Fourier transform; here $p(x,\xi )$ equals $|\xi
|^{2a}$ in the case of $(-\Delta )^a$; note that $|\xi |^{2a}= \mathcal
F(c|y|^{-n-2a})$.
The generalizations of \eqref{eq:1.1a} allow even functions $K(y)$
with less smoothness in $y$; here boundedness above
and below in
comparison with $|y|^{-n-2a}$ is usually assumed (a limited number of studies exist including $x$-dependence).
The generalizations based on \eqref{eq:1.1b} need specific smoothness
assumptions, particularly in $\xi $; however the theory
allows $x$-dependence in a systematic way. The two types have a
considerable overlap. The pseudodifferential methods made it possible
to determine the precise domain of the operator subject to a Dirichlet
condition  \cite{G15,AG23,G23} 

In the following we shall develop results
that primarily rely on pseudodifferential methods, but we shall also
take recourse to probabilistic results at a certain point.}

Optimal regularity results for solutions of linear parabolic equations
 {such as \eqref{eq:1.1}}
are essential for the construction of regular solutions of
corresponding nonlinear parabolic evolution equations with the aid of
the contraction mapping principle. Of particular importance are
results for $L_q$-based Sobolev type function space for general $q\in
(1,\infty)$ (not necessarily $q=2$) since in applications to nonlinear
equations one uses Sobolev type embeddings for $q$ sufficiently large.
This topic is intensively studied for parabolic differential
equations. But in the case of nonlocal operators in domains with
boundary there are only few results. This is of a particular challenge
since results on elliptic regularity in the standard spaces often
fail. 

Estimates of the solutions of \eqref{eq:1.1}  in $L_q$-based function spaces were shown by the second author \cite{G18a,G18b,G23} for $1<q<\infty$ in the case when $P$
is symmetric and  translation-invariant. The results were restricted to this case since the proofs relied on  a Markovian property
obtainable in that case. However, interior estimates (and global
estimates on $\R^{n+1}$) could be shown by
another method in $x$-dependent cases \cite{G18a}. We note that the works
\cite{G18a,G18b} assumed $\Omega $ to be $C^\infty $.

After the extension  in Abels-Grubb \cite{AG23} of the general
treatment of boundary problems for $P$  to cases with nonsmooth
domains $\Omega $, the heat equation results have been followed up in
\cite{G23}, the case $q\ne 2$ still limited to symmetric operators with the Markovian property.

In the present work we address the question of solvability of \eqref{eq:1.1} for
$x$-dependent operators $P=\operatorname{OP}(p(x,\xi ))$ in an
$L_q$-setting ($1<q<\infty $), when both $p$ and $\Omega $ are
nonsmooth. The symbols are assumed to be classical, strongly elliptic
and {\em even} (this is short for an alternating symmetry property of
the homogeneous terms \eqref{eq:3.1}), and the resolvent estimates are
obtained for a large class of nonsymmetric operators not necessarily
having the Markovian property. This includes the important example
$P=L^a$, where $L=-\nabla\cdot A(x)\nabla+b(x)$, $A(x)$ being a smooth $(n\times
n)$-matrix with positive lower bound, and $\operatorname{Re}b(x)\ge 0$; $A(x)$ is assumed real for
$x\in\partial\Omega $. Related operators are treated in a general framework on compact
boundaryless manifolds by Roidos and Shao \cite{RS22}.

We draw on several tools: The interior regularity is
obtained by the general strategy introduced in \cite{G18a} where a
symbolic calculus is set up for symbols with an extra parameter in the
style of \cite{GK93,G95,G96}, allowing the construction of a symbolic inverse (here
nonsmooth results may be included by a simple approximation). Another tool
is that the
resolvent estimates at the boundary can be obtained from the
$x$-independent case by the technique
presented in \cite[Section 6]{AG23}: Here the forward operator
$P-\lambda $ is compared to its principal part ``frozen'' at a
boundary point $x_0$, and an estimate can be obtained in a small
neighborhood of $x_0$ by a scaling that flattens the symbol of $P$ and
the boundary.

Still another tool to obtain sharp solvability properties in
$L_q$-spaces, is to aim for
$\mathcal R$-bounds on the resolvent of the Dirichlet realization of $P$
on $\Omega $. This has to the best of our knowledge not been attempted
before for 
these fractional-order problems. Here we use the
theory laid out in e.g.\ Denk-Hieber-Pr\"uss \cite{DHP03} and
Pr\"uss-Simonett \cite{PS16}.

The main linear results are:

\begin{thm}\label{thm:1.1} Let  $0<a<1$, $\tau >2a$,  $1<q<\infty $,
  and let $\Omega $ be bounded with  $C^{1+\tau
}$-boundary. Let
$P=\operatorname{OP}(p)$ with symbol $p(x,\xi )\in C^\tau
S^{2a}(\rn\times\rn)$, strongly elliptic and even, and
assume that the principal symbol $p_0(x,\xi )$ is real positive at
each boundary point   $x\in \partial\Omega$. Denote the
$L_q$-Dirichlet realization by $P_D$.

Then the resolvent $(P_D-\lambda )^{-1}$ exists for $\lambda $
in a set $V_{\delta ,K}$ with $0<\delta <
\frac\pi 2$, $K\ge 0$,
\begin{equation}
  \label{eq:1.2}
V_{\delta,K} =\{\lambda \in
\C\setminus \{0\}\mid   \operatorname{arg}\lambda \in [\tfrac \pi
2-\delta , \tfrac {3\pi }2+\delta ],
|\lambda |\ge K\},
\end{equation}
 and the operator family $\{\lambda
(P_D-\lambda )^{-1} \mid \lambda \in V_{\delta ,K} \}\subset \mathcal
L(L_q(\Omega ))$ is $\mathcal R$-bounded.  
\end{thm}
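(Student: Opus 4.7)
The plan is to construct, for $\lambda \in V_{\delta,K}$ with $K$ sufficiently large, a right parametrix $Q_\lambda$ of $P_D-\lambda$ by patching together local model resolvents via a smooth partition of unity, and then to invert the remaining error by a Neumann series, the whole construction being carried out inside the category of $\mathcal{R}$-bounded operator families. Choose a finite open cover of $\overline\Omega$ consisting of interior sets $U_j$ and sets centred at boundary points $x_j\in\partial\Omega$, together with a subordinate partition $\sum_j\phi_j^2=1$ near $\overline\Omega$ and cutoffs $\psi_j\in C_c^\infty(U_j)$ equal to $1$ on $\supp\phi_j$.

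On the interior patches I would use the parameter-dependent symbol calculus of \cite{GK93,G95,G96}, extended to $C^\tau S^{2a}$ coefficients by approximation as in \cite{G18a}. Strong ellipticity together with the positivity of $p_0$ at the boundary ensures that $p(x,\xi)-\lambda$ is parameter-elliptic on a sector containing $V_{\delta,K}$ (after adding a large positive constant to $P$ if necessary, absorbed into the shift in $K$). This produces a symbolic inverse $Q_\lambda^{(j)}$ of order $-2a$ with the full parameter gain. The $\mathcal{R}$-boundedness of $\{\lambda Q_\lambda^{(j)}\}\subset \mathcal{L}(L_q(\rn))$ then follows from the Mikhlin-type $\mathcal{R}$-boundedness theorem of Weis applied to the resulting parameter-dependent Fourier multipliers, as in the abstract framework of \cite{DHP03,PS16}.

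For the boundary patches I would invoke the freeze-and-scale argument from \cite[Section 6]{AG23}: locally flatten $\partial\Omega$ near $x_j$, freeze the principal symbol at $x_j$, and rescale so that $P_0(x_j,D)-\lambda$ becomes the resolvent of a fixed even, strongly elliptic, $x$-independent operator on $\mathbb R^n_+$ with the homogeneous Dirichlet condition $u=0$ on $\{x_n\le 0\}$. Because $p_0(x_j,\xi)$ is real positive, the sector $V_{\delta,K}$ lies in the resolvent set of this model operator, and its resolvent is $\mathcal{R}$-bounded by a direct $\mu$-transmission parameter-calculus computation on $\mathbb R^n_+$, the estimate being uniform in the scaling parameter so that undoing the scaling yields a local boundary parametrix $Q_\lambda^{(j)}$ whose family $\{\lambda Q_\lambda^{(j)}\}$ is still $\mathcal{R}$-bounded.

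Assembling $Q_\lambda=\sum_j \psi_j Q_\lambda^{(j)}\phi_j$, one gets $(P_D-\lambda)Q_\lambda=I+R_\lambda$, where $R_\lambda$ collects the commutator terms $[P,\psi_j]Q_\lambda^{(j)}\phi_j$ together with the errors coming from flattening and from the $C^\tau$-approximation of $p$. Each such term is of strictly lower pseudodifferential order in the parameter than $\{\lambda Q_\lambda^{(j)}\}$, so that its $\mathcal{R}$-bound on $V_{\delta,K}$ tends to zero as $K\to\infty$; choosing $K$ so large that the $\mathcal{R}$-bound of $R_\lambda$ is $<\tfrac12$ makes $(I+R_\lambda)^{-1}=\sum_{k\ge 0}(-R_\lambda)^k$ an $\mathcal{R}$-bounded family (this is the standard fact that Neumann series preserves $\mathcal{R}$-boundedness), and the global resolvent $(P_D-\lambda)^{-1}=Q_\lambda(I+R_\lambda)^{-1}$ together with its $\mathcal{R}$-bounded family $\{\lambda(P_D-\lambda)^{-1}\}$ follows. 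The main obstacle will be carrying the full $\mathcal{R}$-bound — rather than just a uniform operator-norm bound — through the freeze-and-scale step at the boundary: since the Markovian/semigroup shortcut used in \cite{G18b,G23} is unavailable in the nonsymmetric case, one must verify Weis-type multiplier hypotheses directly for the parameter-dependent half-space Dirichlet realization of an arbitrary even strongly elliptic $x$-independent symbol, and check that these hypotheses survive the anisotropic rescaling and the $C^\tau$-freezing uniformly in $x_j$.
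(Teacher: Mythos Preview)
Your overall architecture --- localize via a partition of unity, use parameter-dependent calculus in the interior, freeze-and-scale near the boundary, patch, and close by a Neumann series carried out in the $\mathcal{R}$-bounded category --- matches the paper's strategy (Theorem~5.8 together with Propositions~5.4 and~5.7). The differences are in how the boundary model step is executed, and this is exactly where your proposal has a gap.

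You propose to obtain $\mathcal{R}$-boundedness of the frozen-coefficient Dirichlet resolvent on $\mathbb{R}^n_+$ by a ``direct $\mu$-transmission parameter-calculus computation'' and Weis-type multiplier hypotheses, and you correctly flag this as the main obstacle. The paper does \emph{not} attempt this. Instead it exploits the hypothesis that $p_0(x_0,\xi)$ is \emph{real} positive at boundary points in the following way (Proposition~5.2): the frozen operator $\overline{P}=\Op(p_0(x_0,\cdot))$ is a real, even, homogeneous symbol, so on a \emph{bounded} auxiliary $C^{1+\tau}$-domain $\Sigma$ (constructed to agree with $\Omega$ near $x_0$) its Dirichlet realization is associated with a Dirichlet form in the sense of Fukushima--Oshima--Takeda, hence Markovian; Lamberton's theorem then gives maximal $L_q$-regularity and thus $\mathcal{R}$-sectoriality by the Weis/Denk--Hieber--Pr\"uss characterization. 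No half-space parameter-calculus for $\mathcal{R}$-bounds is ever invoked. The freeze-and-scale step then perturbs from $\overline{P}$ to $P_z$ on $\Sigma_z$ using the $\mathcal{R}$-bounded perturbation result (Proposition~4.5~$2^\circ$), and scales back to produce the local operator $P_1$ on a bounded $\Sigma_1$.

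So the Markovian shortcut is not merely ``unavailable in the nonsymmetric case'' and therefore to be replaced: it is precisely the device the paper uses, made available by the reality assumption on $p_0$ at $\partial\Omega$ and by passing to bounded auxiliary domains rather than to $\mathbb{R}^n_+$. Your route would require establishing $\mathcal{R}$-boundedness for the fractional-order half-space Dirichlet problem directly; that is not in the literature for the $\mu$-transmission setting and would be a substantial independent result. Unless you can supply that, the boundary step of your argument is incomplete. A secondary difference is structural: the paper already knows $(P_D-\lambda)^{-1}$ exists (from the $L_2$ variational theory transferred to $L_q$) and represents it via the local resolvents $(P_{i,D}-\lambda)^{-1}$, collecting commutators into $T_\lambda$ and summing $\sum_k T_\lambda^k R_{0,\lambda}$; you instead build a right parametrix first. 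Either orientation works, but note that the paper's remainder $T_\lambda$ acts on $u$, not on $f$, and its $\mathcal{R}$-bound decay is obtained through the interpolation estimates of Theorem~4.7~$3^\circ$ rather than through a parameter-order count.
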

 {\begin{rem}
  The assumption that the principal symbol $p_0(x,\xi )$ is real positive at
each boundary point $x\in \partial\Omega$ is made for technical reasons. The proof is based on localization and perturbation arguements, where a maximal regularity result for constant coefficient operators with real and positive principle part is the starting point, cf.~Proposition~\ref{prop:5.2} below. 
\end{rem}}

The domain of $P_D$ is a so-called $a$-transmission space
$H_q^{a(2a)}(\comega)$ \cite{G15},\cite{AG23}, denoted $D_q(\comega)$
for short.

\begin{thm}\label{thm:1.2}
Assumptions as in Theorem~\ref{thm:1.1}. Let
$1<p<\infty $.

For any
$f\in L_p(I;L_q(\Omega))$, any $T>0$, the heat equation \eqref{eq:1.1} has  a unique solution $u\in
C^0(\overline{I};L_q(\Omega ))$  satisfying
\begin{equation}
  \label{eq:1.3}
u\in L_p(I;D_q(\overline \Omega ))\cap  H^1_p( I;L_q(\Omega
)).  
\end{equation}
\end{thm}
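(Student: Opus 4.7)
The plan is to deduce Theorem~\ref{thm:1.2} directly from Theorem~\ref{thm:1.1} via the abstract characterization of maximal $L_p$-regularity by $\mathcal R$-boundedness of the resolvent, due to Weis and laid out in \cite{DHP03,PS16}. The underlying space $L_q(\Omega)$ is UMD for $1<q<\infty$, which is precisely the framework in which $\mathcal R$-sectoriality is equivalent to maximal $L_p$-regularity.

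First, I would recast Theorem~\ref{thm:1.1} in the language of $\mathcal R$-sectoriality. The set $V_{\delta,K}$ is a sector of opening angle $\pi+2\delta$ around the negative real axis, with a disk of radius $K$ removed. Choose $K_0\ge K$ sufficiently large so that, for every $\mu$ in the open sector $\Sigma_{\pi/2+\delta}:=\{\mu\in\C\setminus\{0\}\mid |\arg\mu|<\pi/2+\delta\}$, one has $\lambda:=-\mu-K_0\in V_{\delta,K}$. Then the shifted operator $A:=P_D+K_0$ on $L_q(\Omega)$ with domain $\dom A=D_q(\overline\Omega)$ satisfies: $\Sigma_{\pi/2+\delta}\cup\{0\}\subset \rho(-A)$, and the family $\{\mu(\mu+A)^{-1}\mid \mu\in\Sigma_{\pi/2+\delta}\}$ is $\mathcal R$-bounded in $\mathcal L(L_q(\Omega))$. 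In other words, $A$ is $\mathcal R$-sectorial of angle $<\pi/2$ with $0\in\rho(-A)$.

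By Weis's theorem (see, e.g., \cite[Thm.~4.2]{DHP03} or \cite[Thm.~4.4.4]{PS16}), this $\mathcal R$-sectoriality implies that $A$ has maximal $L_p$-regularity on $(0,T)$ for every $T>0$: for every $g\in L_p(I;L_q(\Omega))$, the problem
\begin{equation*}
\partial_t v+Av=g\ \text{ on }I,\qquad v|_{t=0}=0
\end{equation*}
admits a unique solution $v\in L_p(I;D_q(\overline\Omega))\cap H^1_p(I;L_q(\Omega))$, together with the corresponding a~priori estimate. To return to \eqref{eq:1.1}, I would set $v(t):=e^{-K_0t}u(t)$; then $\partial_tu+P_Du=f$ is equivalent to $\partial_tv+Av=e^{-K_0t}f=:g$, and since $I$ is bounded, $g\in L_p(I;L_q(\Omega))$ iff $f$ is, with equivalent norms, and the regularity classes for $u$ and $v$ coincide. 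Uniqueness transfers in the same way.

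The continuity statement $u\in C^0(\overline I;L_q(\Omega))$ follows from the well-known trace/embedding
\begin{equation*}
L_p(I;D_q(\overline\Omega))\cap H^1_p(I;L_q(\Omega))\hookrightarrow C^0(\overline I;(L_q(\Omega),D_q(\overline\Omega))_{1-1/p,p})\hookrightarrow C^0(\overline I;L_q(\Omega)),
\end{equation*}
valid on bounded intervals. The only genuine substance in this argument is the verification that Theorem~\ref{thm:1.1} produces $\mathcal R$-sectoriality of angle strictly less than $\pi/2$, so that Weis's theorem applies; this is exactly what the opening angle $\pi+2\delta>\pi$ of $V_{\delta,K}$ provides. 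The main obstacle in the whole enterprise was establishing the $\mathcal R$-bound in Theorem~\ref{thm:1.1}; once that is available, Theorem~\ref{thm:1.2} follows by this essentially automatic abstract argument.
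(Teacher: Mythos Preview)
Your proof is correct and follows essentially the same route as the paper: shift $P_D$ by a large constant so that the $\mathcal R$-boundedness from Theorem~\ref{thm:1.1} yields $\mathcal R$-sectoriality of angle $<\pi/2$, invoke the Weis/Denk--Hieber--Pr\"uss characterization (the paper's Theorem~\ref{thm:4.4}) to obtain maximal $L_p$-regularity for the shifted operator, and then undo the shift on finite intervals. You give a bit more detail than the paper does (the explicit exponential change of variable and the trace embedding for $C^0(\overline I;L_q(\Omega))$), but the argument is the same.
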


This is maximal $L_p$-regularity, shown here for the first time for
nonsymmetric 
fractional-order Dirichlet problems with $x$-dependent symbols.

There is also a solvability result with a nonzero local Dirichlet condition, when
$q<(1-a)^{-1}$:

\begin{thm}\label{thm:1.3}
In addition to the assumptions in Theorem~\ref{thm:1.1}, let $\tau >2a+1$,
$q<(1-a)^{-1}$  and $1<p<\infty $. The nonhomogeneous heat problem
\begin{equation}
  \label{eq:1.4}
  \begin{split}
    \partial_tu+Pu&=f\text{ on }\Omega \times I ,\\
     \gamma _0(u/d_0^{a-1})&=\psi \text{ on }\partial\Omega \times I ,\\
u&=0\text{ on }(\R^n\setminus\Omega )\times I, \\
u|_{t=0}&=0,
  \end{split}
\end{equation}
has for  $f\in
L_p(I;L_q(\Omega ))$, $\psi \in L_p(I;B_{q,q}^{a+1-1/q}(\partial\Omega ))\cap  
H_p^1(  I; B_{q,q}^\varepsilon (\partial\Omega ))$ with $\psi
 (x,0)=0$ ($\varepsilon >0$), and any $T>0$ a unique
 solution $u$ satisfying
 \begin{equation*}
u\in L_p(I;H_q^{(a-1)(2a)}(\comega ))\cap   H_p^1(I; L_q(\Omega )).
\end{equation*}
\end{thm}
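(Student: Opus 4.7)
The plan is to reduce the nonhomogeneous problem to the homogeneous case treated in Theorem~\ref{thm:1.2} by subtracting a suitable lifting of the boundary datum $\psi$. The space $H_q^{(a-1)(2a)}(\comega)$ plays the role of an enlargement of $D_q(\comega)$: under the hypothesis $q<(1-a)^{-1}$ the map $u\mapsto\gamma _0(u/d_0^{a-1})$ is well-defined on this space, has range $B_{q,q}^{a+1-1/q}(\partial\Omega )$, and admits a bounded right inverse $K$, whose construction goes through in the $C^{1+\tau }$ setting with $\tau >2a+1$ by means of the transmission-space machinery of~\cite{AG23}.

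Setting $w(\cdot ,t):=K\psi (\cdot ,t)$, the spatial lifting $K$ commutes with $\partial _t$, so from $\psi \in L_p(I;B_{q,q}^{a+1-1/q}(\partial\Omega ))$ one obtains $w\in L_p(I;H_q^{(a-1)(2a)}(\comega))$ with $\gamma _0(w/d_0^{a-1})=\psi $, and the additional control $\psi \in H_p^1(I;B_{q,q}^\varepsilon (\partial\Omega ))$ translates via a complementary mapping property $K\colon B_{q,q}^\varepsilon (\partial\Omega )\to H_q^{s}(\Omega )$ for some small $s>0$ into $\partial _tw\in L_p(I;L_q(\Omega ))$. The compatibility $\psi (\cdot ,0)=0$ then yields $w(\cdot ,0)=0$. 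The mapping property of the even pseudodifferential operator $P$ on $(a-1)$-transmission spaces (cf.~\cite{G15,AG23}) yields $Pw\in L_p(I;L_q(\Omega ))$.

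Next, setting $v:=u-w$ and $g:=f-\partial _tw-Pw\in L_p(I;L_q(\Omega ))$, problem~\eqref{eq:1.4} is equivalent to the homogeneous Dirichlet heat problem~\eqref{eq:1.1} with right-hand side $g$ and unknown $v$: the vanishing boundary datum $\gamma _0(v/d_0^{a-1})=0$ upgrades $v$ from the larger $(a-1)$-transmission space to the $a$-transmission space $D_q(\comega)$. Theorem~\ref{thm:1.2} then provides a unique $v\in L_p(I;D_q(\comega))\cap H_p^1(I;L_q(\Omega ))$, and since $D_q(\comega)\hookrightarrow H_q^{(a-1)(2a)}(\comega)$, the function $u:=v+w$ possesses the required regularity and solves \eqref{eq:1.4}. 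Uniqueness follows by applying Theorem~\ref{thm:1.2} to the difference of two candidate solutions, whose boundary trace and initial value both vanish.

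The main technical obstacle is the construction of the lifting $w$ with the correct joint space--time regularity. Spatially one invokes the trace theory in $(a-1)$-transmission spaces, where the condition $q<(1-a)^{-1}$ is essential for the singular factor $d_0^{a-1}$ to be locally $L_q$-integrable near $\partial\Omega $ so that the quotient $u/d_0^{a-1}$ makes sense. Temporally one must extract $\partial _tw\in L_p(I;L_q(\Omega ))$ from only the modest regularity $H_p^1(I;B_{q,q}^\varepsilon )$ of $\partial _t\psi $; this forces the use of a \emph{single} lifting operator whose mapping properties are simultaneously adapted to the natural trace index $a+1-1/q$ and to the small index $\varepsilon $.
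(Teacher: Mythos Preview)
Your proposal is correct and follows essentially the same route as the paper: lift the boundary datum via a right inverse $K=K_{(0)}^{a-1}$ of the trace $\gamma_0^{a-1}$, subtract the lifting, and reduce to the homogeneous problem handled by Theorem~\ref{thm:1.2}; uniqueness then follows from the homogeneous result applied to the difference. One small imprecision: the complementary mapping property of $K$ at low boundary regularity does not land in $\ol H_q^{s}(\Omega)$ but in the transmission space $H_q^{(a-1)(a-1+1/q+\varepsilon)}(\comega)$, which under the hypothesis $q<(1-a)^{-1}$ embeds into $L_q(\Omega)$ (this is precisely where that restriction on $q$ enters), yielding the needed $\partial_t w\in L_p(I;L_q(\Omega))$.
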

 {\begin{rem}
  We note that the assumption $\psi \in L_p(I;B_{q,q}^{a+1-1/q}(\partial\Omega ))\cap  
H_p^1(  I; B_{q,q}^\varepsilon (\partial\Omega ))$ is not optimal. The statement of Theorem~\ref{thm:1.3} holds true for any $\psi$ in the trace space of $L_p(I;H_q^{(a-1)(2a)}(\overline{\Omega} ))\cap   H_p^1(I; L_q(\Omega ))$ with respect to $\gamma _0(\cdot /d_0^{a-1})$. But we do not have a characterization of this space for the time being. 
\end{rem}}

From Theorem \ref{thm:1.2} we moreover deduce nonlinear
results. Consider the parabolic problem
\begin{equation}\label{eq:1.5}
  \begin{aligned}
  \partial_tu+ a_0(x,u)Pu &= f(x,u)&\quad& \text{in }\Omega\times (0,T),\\
  u&=0 &\quad&\text{on } (\R^n\setminus \Omega)\times (0,T),\\
  u|_{t=0} &= u_1 &\quad& \text{in }\Omega,
  \end{aligned}
\end{equation}
for some $T>0$. 

\begin{thm}\label{thm:Nonlinear'}
   Let $\Omega $ be a bounded domain with  $C^{1+\tau
   }$-boundary for some $\tau >2a$, and let $1<p,q<\infty $ be such that
       $ (a+\tfrac1q)(1-\tfrac{1}p) -\tfrac{n}q >0$.
If $n=1$, assume moreover $\frac1q<a$. Let
$P$ be as in Theorem \ref{thm:1.1}.
 Moreover, for an open set 
$U\subset \R$  with $0\in U$,  let $a_0\in
C^{\max(1,\tau)}(\R^n\times U,\R)$ with $a_0(x,s)>0$ for all $s\in U$ and
$x\in\R^n$, let $f\colon \Rn\times U\to \R$ be continuous in  $(x,u)$ and locally Lipschitz in $u$, and let
 {$u_0 \in X_{\gamma,1}
  \cap
C^\tau(\overline{\Omega}) $ with $\overline{u_0(\Omega)}\subset
U$; here $X_{\gamma,1}:=
(L_q(\Omega),D_q(\overline\Omega))_{1-\frac1p,p}$.

Then there are $\eps_0,T>0$ such that for every
$u_1\in X_{\gamma,1}$} with
$\|u_0-u_1\|_{X_{\gamma,1}}\leq \eps_0$, the system
\eqref{eq:1.5} possesses a unique solution
\begin{equation*}
  u\in L_p((0,T);D_q(\overline{\Omega}))\cap H^1_p((0,T);L_q(\Omega)).
\end{equation*}
\end{thm}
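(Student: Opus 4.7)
I would apply the standard abstract quasilinear maximal $L_p$-regularity theorem of Pr\"uss-Simonett \cite{PS16} on the scale $X_0=L_q(\Omega)$, $X_1=D_q(\comega)$, and the trace space $X_\gamma=(X_0,X_1)_{1-1/p,p}$. The relevant solution space is
$$\mathbb{E}_T:=L_p(0,T;X_1)\cap H^1_p(0,T;X_0),$$
which embeds continuously into $C(\overline I;X_\gamma)$, and Theorem~\ref{thm:1.2} is the linear building block.

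Under the arithmetic condition $(a+\tfrac1q)(1-\tfrac1p)-\tfrac nq>0$ (sharpened by $\tfrac1q<a$ when $n=1$), one obtains a Sobolev embedding $X_\gamma\hookrightarrow C^\beta(\comega)$ for some $\beta>0$, and hence $\mathbb{E}_T\hookrightarrow C(\overline I\times\comega)$. This is what guarantees that every $v$ in a small $\mathbb{E}_T$-ball around a fixed extension $u_0^*\in\mathbb{E}_T$ of $u_0$ keeps its values uniformly inside the open set $U$, so that $a_0(x,v)$ and $f(x,v)$ are defined pointwise and $a_0(\cdot,v)$ stays uniformly bounded away from zero. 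Next, the frozen-coefficient operator $A_0:=a_0(\cdot,u_0(\cdot))P$ inherits maximal $L_p$-regularity from Theorem~\ref{thm:1.2}: by the regularity hypotheses, $a_0(\cdot,u_0(\cdot))\in C^\tau(\comega)$ with a positive lower bound, so the modified symbol $a_0(x,u_0(x))p(x,\xi)$ remains in $C^\tau S^{2a}(\R^n\times\R^n)$, strongly elliptic, even, and real-positive at the boundary (after a harmless $C^\tau$-extension of $a_0(\cdot,u_0(\cdot))$ from $\comega$ to $\R^n$).

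I would then rewrite \eqref{eq:1.5} in perturbative form as
$$\partial_tu+A_0u=F(u):=f(\cdot,u)+\bigl(a_0(\cdot,u_0)-a_0(\cdot,u)\bigr)Pu,\qquad u|_{t=0}=u_1,$$
and define $\Phi\colon\overline{B_\varepsilon(u_0^*)}\subset\mathbb{E}_T\to\mathbb{E}_T$ by $\Phi(v)=w$, where $w$ uniquely solves $\partial_tw+A_0w=F(v)$ with $w|_{t=0}=u_1$. Solvability of this linear problem with nonzero $u_1\in X_\gamma$ is obtained from the zero-data statement of Theorem~\ref{thm:1.2} in the standard way by subtracting a lift of $u_1$ into $\mathbb{E}_T$, which exists precisely because $X_\gamma$ is the trace space.

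The main obstacle will be the final step: verifying that $\Phi$ is a strict contraction on $\overline{B_\varepsilon(u_0^*)}$ for sufficiently small $T,\varepsilon,\varepsilon_0$. The key estimates combine the embedding $\mathbb{E}_T\hookrightarrow C(\overline I\times\comega)$ with the local Lipschitz properties of $a_0$ and $f$: one bounds $\|a_0(\cdot,u_0)-a_0(\cdot,v)\|_{L_\infty(I\times\Omega)}$ by the Lipschitz constant of $a_0$ times $\|u_0^*-v\|_{L_\infty}\lesssim\|u_0^*-v\|_{\mathbb{E}_T}$, pairs it with $\|Pv\|_{L_p(I;L_q)}\le C\|v\|_{\mathbb{E}_T}$, and uses smallness of $T$ to convert uniform-in-time bounds into small $L_p(I)$-bounds. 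Doing the same for differences $\Phi(v)-\Phi(v')$ forces the contraction factor below one, and Banach's fixed-point theorem then produces the unique solution $u\in\mathbb{E}_T$.
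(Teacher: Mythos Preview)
Your proposal is correct and follows essentially the same route as the paper: the same choice of $X_0=L_q(\Omega)$, $X_1=D_q(\comega)$, the same embedding $X_\gamma\hookrightarrow C^0(\comega)$ from the arithmetic condition, and the same reduction to maximal $L_p$-regularity of the frozen operator $a_0(\cdot,u_0(\cdot))P$ via Theorem~\ref{thm:1.2}. The only difference is packaging: the paper verifies the hypotheses of the abstract quasilinear existence theorem of K\"ohne--Pr\"uss--Wilke \cite[Theorem~2.1]{KPW10} (equivalently \cite[Theorem~5.1.1]{PS16}) and invokes it as a black box, whereas you unroll the contraction-mapping argument by hand in the style of Cl\'ement--Li~\cite{CL93}. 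Both are valid; the paper's version is shorter, yours is more self-contained.
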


This leads in particular to  solvability results for nonlinear
diffusion equations, including problems of the type of the
porous medium equation, see Corollary \ref{cor:Diffusion}ff.

Earlier works on \eqref{eq:1.1}
have mostly been concerned with $P=(-\Delta )^a$ and
$x$-independent singular integral operator generalizations. To mention a few: There are results on Schauder
estimates and H\"older properties, by e.g.\
Felsinger and Kassmann \cite{FK13}, Chang-Lara and Davila \cite{CD14},
Jin and Xiong \cite{JX15}; and quite precise results on regularity in anisotropic H\"older spaces
by Fernandez-Real and Ros-Oton \cite{FR17}, and Ros-Oton and Vivas \cite{RV18}. For $P=(-\Delta
)^a$, Leonori, Peral, Primo
and Soria \cite{LPPS15} showed $L_q(I;L_r(\Omega ))$ estimates; Biccari, Warma
and Zuazua \cite{BWZ18} showed 
$L_q(I;B^{2a}_{q,r,loc}(\Omega ))$-estimates for certain $r$, and
Choi, Kim and Ryu have in \cite{CKR23} shown
weighted $L_q$-esti\-ma\-tes. Results on $\rn$ with $x$-dependence
have been obtained by by Dong, Jung and Kim \cite{DJK23}.
 {Singular integral formulations with $x$-dependence are
  presented in a systematic way by Fern\'{a}ndez-Real and Ros-Oton in \cite{FR24}.}
As recalled
further above, we have earlier shown maximal $L_q$-regularity
results on $\rn$, and for domains $\Omega $ in the
translation-invariant symmetric case, \cite{G18a},
\cite{G23}. Roidos and Shao \cite{RS22} show maximal $L_p$-regularity for
operators like $(-\nabla\cdot \mathfrak a(x)\nabla)^a$ on compact boundaryless
manifolds. The latter includes nonlinear applications such as the fractional
porous medium equation; this is also treated in  V\'azques, de~Pablo, Quir\'{o}s and Rodr\'{\i}guez 
\cite{VPQR17} on $\rn$ in H\"older spaces.

\medskip

\noindent{\it Plan of the paper:} Section 2 recalls definitions of function
spaces and pseudodifferential operators. Section 3 presents our
hypotheses on $P$ and sets up the Dirichlet
realization. Section 4 collects the needed features
of $\mathcal R$-boundedness and their connection with maximal $L_p$-regularity. In Section 5, we deduce the main results on
resolvent estimates for the Dirichlet realization. In Section 6, this is applied to obtain
the linear results for time-dependent problems. Finally, in Section 7
we show some consequences for nonlinear evolution problems, including
  fractional nonlinear diffusion equations and  fractional porous medium equations.

\section{Prerequisites}
\subsection{Function spaces}

The space $C^k(\rn)\equiv C^k_b(\rn)$ consists  of $k$-times differentiable
functions with bounded norms $\|u\|_{C^k}=\sup_{|\alpha |\le k,x\in\rn}|D^\alpha
u(x)|$ ($k\in{\mathbb N}_0$), and the H\"older
spaces $C^\tau   (\rn)$, $\tau  =k+\sigma $ with $k\in{\mathbb N}_0$,
$0<\sigma <1$, also denoted $C^{k,\sigma } (\rn)$, consists of
functions $u\in C^k(\rn)$ with bounded norms
$\|u\|_{C^\tau  }=\|u\|_{C^k}+\sup_{|\alpha |= k,x\ne y}|D^\alpha
u(x)-D^\alpha u(y)|/|x-y|^\sigma $. The latter definition extends to
Lipschitz spaces $C^{k,1} (\rn)$. There are similar spaces over subsets
of $\rn$. 
There are also the H\"older-Zygmund spaces $C^s _*(\rn)\equiv B^s
_{\infty ,\infty }(\rn)$ defined for $s\in\R$ with good interpolation
properties, coinciding with $C^s(\rn)$ when $s\in\rp\setminus \N$.

The halfspaces $\rnpm$ are defined by
 $\rnpm=\{x\in
{\mathbb R}^n\mid x_n\gtrless 0\}$, with points denoted  $x=(x',x_n)$,
$x'=(x_1,\dots, x_{n-1})$; $\R^1_\pm$ is denoted $\R_\pm$. For a given real function $\zeta \in C^{1+\tau}
(\R^{n-1})$ (some $\tau >0$), we define the curved halfspace
$\rn_\zeta  $ by
\begin{equation}
  \label{eq:2.1}
\rn_\zeta = \{x\in\R^n\mid x_n>\zeta (x')\};
\end{equation}
it is
 a $C^{1+\tau }$-domain.

By a bounded $C^{1+\tau }$-domain $\Omega $  we mean the following:
$\Omega \subset\rn$ is open,
bounded and nonempty,
and every boundary point
$x_0$ has an open neighborhood $U$ such that, after a translation of
$x_0$ to $0$ and a suitable rotation, $U\cap \Omega $ equals $U\cap \rn_\zeta 
$ for a function $\zeta  \in C^{1+\tau }(\R^{n-1})$ with $\zeta
(0)=0$.

Restriction from $\R^n$ to $\rnpm$ (or from
${\mathbb R}^n$ to $\Omega $ resp.\ $\complement\comega= \rn \setminus \comega$) is denoted $r^\pm$,
 extension by zero from $\rnpm$ to $\R^n$ (or from $\Omega $ resp.\
 $\complement\comega$ to ${\mathbb R}^n$) is denoted $e^\pm$. (The
 notation is also used for $\Omega =\rn_\zeta  )$.) Restriction
 from $\crnp$ or $\comega$ to $\partial\rnp$ resp.\ $\partial\Omega $
 is denoted $\gamma _0$.

When $\Omega $ is a $C^{1+\tau }$-domain, we denote by $d(x)$ 
  a function  that is $C^{1+\tau } $ on $\comega$,
positive on $\Omega $ and vanishes only to the first order on
$\partial\Omega $ (i.e., $d(x)=0$ and $\nabla d(x)\neq 0$ for $x\in
\partial\Omega$). It is bounded above and below by the distance
$d_0(x)=\dist(x,\partial\Omega )$; see further details in \cite{AG23}.

Throughout the paper, $q$ satisfies $1<q<\infty $. The Bessel-potential spaces
$H^s_q({\mathbb R}^n)$ are defined
for $s\in{\mathbb R}$ by 
\begin{equation}
  \label{eq:2.2}
H_q^s(\R^n)=\{u\in \mathcal S'({\mathbb R}^n)\mid \F^{-1}(\ang{\xi }^s\hat u)\in
L_q(\R^n)\},
\end{equation}
where $\mathcal F$ is the Fourier transform  $\hat
u(\xi )=\mathcal F
u(\xi )= \int_{{\mathbb R}^n}e^{-ix\cdot \xi }u(x)\, dx$, and the
function $\ang\xi $ equals $(|\xi |^2+1)^{\frac12}$.  For  $q=2$, this is the scale of $L_2$-Sobolev spaces,
where the index $2$ is usually omitted. $\mathcal S'(\rn)$ is the Schwartz
space of temperate distributions, the dual space of $\mathcal S(\rn)$;
the space of rapidly decreasing $C^\infty $-functions. (The spaces can
be defined for other values of $q$, but some properties we need are
linked to $q\in (1,\infty )$.)

For $s\in {\mathbb
N}_0=\{0,1,2,\dots\}$, the spaces $H_q^s(\rn)$ are also denoted $W_q^s({\mathbb R}^n)$ or $W^{s,q}({\mathbb R}^n)$
in the literature. We moreover need to refer to the Besov
spaces  $B^s_{q,q}(\rn)$, also denoted $B^s_q(\rn)$,
that coincide with the
$W^s_q$-spaces when $s\in \rp\setminus \N$.
They necessarily enter in connection with
 boundary value problems in an $H^s_q$-context,
 because they are the correct range spaces for
trace maps $\gamma _ju=(\partial_n^ju)|_{x_n=0}$:
\begin{equation}
  \label{eq:2.3}
\gamma _j\colon \ol H^s_q(\rnp), \ol B^s_{q,q}(\rnp) \to
B_{q,q}^{s-j-\frac1q}({\mathbb R}^{n-1}), \text{ for }s-j-\tfrac1q >0,
\end{equation}
 (cf.\ \eqref{eq:2.4}), surjectively and with a continuous right inverse; see e.g.\ the overview in
the introduction to \cite{G90}. For $q=2$, the two scales $H_q^s$ and
$B_{q,q}^s$ are
identical, but for $q\ne 2$ they are related by strict inclusions: $
H^s_q\subset B^s_{q,q}\text{ when }q>2$, $H^s_q\supset B^s_{q,q}\text{ when
}q<2$.

In relation to $\Omega $, \eqref{eq:2.2} gives rise to 
 two scales of spaces for $s\in{\mathbb R}$:
 \begin{equation}
   \label{eq:2.4}
\begin{split}
\ol H_q^{s}(\Omega)&=\{u\in \mathcal D'(\Omega )\mid u=r^+U \text{ for some }U\in
H_q^{s}(\R^n)\}, \text{ the {\it restricted} space},\\
\dot H_q^{s}(\comega)&=\{u\in H_q^{s}({\mathbb R}^n)\mid \supp u\subset
\comega \},\text{ the {\it supported} space;}
\end{split}
 \end{equation}
here $\operatorname{supp}u$ denotes the support of $u$ (the complement
 of the largest open set where $u=0$). 
 $\ol H_q^s(\Omega )$ is in other texts often denoted  $H_q^s(\Omega )$  or
$H_q^s(\comega )$, and $\dot H_q^{s}(\comega)$ may be indicated with a
ring, zero or twiddle;
the current notation stems from H\"ormander \cite[App. B.2]{H85}.
For $1<q<\infty $, there is an identification of $\ol H_q^s(\Omega )$ with the dual space
of $\dot H_{q'}^{-s}(\comega)$, $\frac1{q'}=1-\frac1q\,$, in terms of a
duality extending the sesquilinear scalar product
$\ang{f,g}=\int_\Omega f\,\ol g\, dx$.

In discussions of heat operator problems it will sometimes be
convenient to refer to anisotropic Bessel-potential spaces over
$\R^{n+1}=\{(x,t)\mid x\in\rn, t\in\R\}$. With $d\in\rp$, we define
\begin{equation}
  \label{eq:2.5}
\{\xi ,\tau \}\equiv (\ang\xi ^{2d}+\tau ^2)^{1/(2d)},
\end{equation}
leading to the ``order-reducing'' operators (defined for all $s\in{\mathbb R}$)
$$
\Theta ^su=\Op(\{\xi ,\tau \}^s)u\equiv \F^{-1}_{(\xi ,\tau )\to
(x,t)}(\{\xi ,\tau \}^s\F_{(x,t)\to (\xi ,\tau )}u),
$$
 Then we define:
 \begin{equation}
   \label{eq:2.6}
H^{(s,s/d)}_q(\rn\!\times\!\mathbb R)=\Theta ^{-s}L_q
(\mathbb R^{n+1});
 \end{equation}
for $1< q<\infty $, $s\in\R$.  Note that
the case $s=0$ gives $L_q(\R^{n+1})$, and the case $s=d$ gives
\begin{equation}
  \label{eq:2.7}
H^{(d,1)}_q(\rn\!\times\!\mathbb R)= L_q(\R;H_q^d(\rn))\cap H^1_q(\R;
L_q(\rn)).
\end{equation}
More on these spaces in \cite{G18a}.

\subsection{Pseudodifferential operators  and transmission spaces}

Recall that
the pseudodifferential operator (briefly expressed: $\psi$do) $P$ on $\rn$ with symbol $p\colon \rn\times \rn\to \C$ is defined as 
\begin{equation}\label{eq:2.8}
(Pu)(x)=\mathcal
F^{-1}_{\xi \to x}\bigl(p(x,\xi )(\mathcal F u)(\xi )\bigr)=\operatorname{OP}(p)u(x),
\end{equation}
where  $(\mathcal F u)(\xi )= \hat
u(\xi )=
\int_{{\R}^n}e^{-ix\cdot \xi }u(x)\, dx
$ denotes the Fourier transform of $u$ for suitable $u\colon \rn\to \C$, and $\mathcal F^{-1}$ is the inverse Fourier transform.
Under suitable conditions on the symbol $p$, $P$ is well-defined for
$u\in \mathcal S(\rn)$, and the definition extends to much more general
spaces.
(Further details and references are given in \cite{AG23}, \cite{G23}.)

 For $\tau >0$, $m\in \R$, the
space    $C^\tau S^m_{1,0}(\R^{n}\times \rn)$ of $C^\tau $-symbols of
order $m$
 consists of functions
$p \colon  \R^{n}\times \R^n\to \C$ that are continuous with respect
to 
$(x,\xi)\in  \R^{n} \times \R^n$ and $C^\infty $ with respect to $\xi\in
\R^n$, such that for every $\alpha\in\N_0^n$  we have:
$\partial_\xi^\alpha p(x,\xi)$ is in $C^\tau (\R^{n})$ with respect to
$x$ and satisfies for all $\xi\in\R^n$, $\alpha \in {\mathbb N}_0^n$, 
\begin{equation}
  \label{eq:2.9}
  \|\partial_\xi^\alpha p(\cdot,\xi)\|_{C^\tau  (\R^{n})}\leq C_\alpha
  \ang{\xi}^{m-|\alpha|} ,
\end{equation}
for some $C_\alpha>0$. The symbol space is a Fr\'echet space with the semi-norms
\begin{equation}
  \label{eq:2.10}
  |p|_{k,C^\tau S^m_{1,0}(\R^{n}\times \R^n)}:= \max_{|\alpha|\leq k} \sup_{\xi\in\R^n} \ang{\xi}^{-m+|\alpha|}\|\partial_\xi^\alpha p(\cdot,\xi)\|_{C^\tau  (\R^{{n}})}\quad \text{for }k\in\N_0.
\end{equation}
For such symbols there holds:
\begin{equation}
  \label{eq:2.11}
    \Op(p)\colon H^{s+m}_q (\R^n)\to H^s_q(\R^n)\quad \text{for all
    }|s|<\tau,
    \end{equation}
where the operator norm for each $s$ 
is estimated by a semi-norm for some $k\in\mathbb{N}_0$ (depending on
$s$).

The space of $C^\infty $-symbols  $S^m_{1,0}({\mathbb R}^n\times{\mathbb
R}^n)$ of order $m\in{\mathbb R}$ equals  $\bigcap _{\tau >0}C^\tau
S^m_{1,0}({\mathbb R}^n\times{\mathbb R}^n)$.

The subspaces of \emph{classical} symbols  $C^\tau S^m(\R^{n}\times \rn)$ resp.\  $
S^m(\R^{n}\times \rn)$ consist of those functions in  $C^\tau S^m_{1,0}(\R^{n}\times \rn)$ resp.\  $
S^m_{1,0}(\R^{n}\times \rn)$ that moreover have
expansions into terms  $p_j$ homogeneous in $\xi $ of degree $m-j$ for
$|\xi |\ge 1$, all $j\in\N_0$, such that for all $\alpha
\in{\mathbb N}_0^n$, $ J\in{\mathbb N}_0$ there is some $C_{\alpha,J}>0$ satisfying
\begin{equation}
  \label{eq:2.12}
\|\partial_\xi ^\alpha \bigl(p(\cdot,\xi )-
{\sum}_{j<J}p_j(\cdot,\xi )\bigr)\|_{C^\tau  (\R^{n})}\leq C_{\alpha
,J}\ang{\xi}^{m-J-|\alpha|}\quad \text{for all }\xi\in\R^n.
\end{equation}

The operator $P=\Op(p)$ and the symbol $p$ are said to be \emph{elliptic}, when, for a sufficiently large $R>0$
there is a $c>0$ such that
$$
|p(x,\xi )|\ge c|\xi |^m \quad \text{for all }|\xi |\ge R, x\in\rn;
$$
this holds in the classical case if and only if (with some $c'>0$)
$$
|p_0(x,\xi )|\ge c'|\xi |^m \quad\text{for all }|\xi |\ge 1, x\in \rn.
$$

A special role in the theory is played by the \emph{order-reducing
operators}. There is a simple definition of operators $\Xi _\pm^t $ on
${\mathbb R}^n$, $t\in{\mathbb R}$,
\begin{equation}
  \label{eq:2.13}
\Xi _\pm^t =\operatorname{OP}(\chi _\pm^t),\quad \chi _\pm^t=(\ang{\xi '}\pm i\xi _n)^t ;
\end{equation}
 they preserve support
in $\crnpm$, respectively. The functions
$(\ang{\xi '}\pm i\xi _n)^t $ do not satisfy all the estimates for $S^{t }_{1,0}({\mathbb
R}^n\times{\mathbb R}^n)$, but  definition \eqref{eq:2.8} applies anyway. There is a more refined choice $\Lambda _\pm^t $
\cite{G90, G15}, with
symbols $\lambda _\pm^t (\xi )$ that do
satisfy all the estimates for $S^{ t }_{1,0}({\mathbb
R}^n\times{\mathbb R}^n)$; here $\overline{\lambda _+^t }=\lambda _-^{t }$.
The symbols have holomorphic extensions in $\xi _n$ to the complex
halfspaces ${\mathbb C}_{\mp}=\{z\in{\mathbb C}\mid
\operatorname{Im}z\lessgtr 0\}$; it is for this reason that the operators preserve
support in $\crnpm$, respectively. Operators with that property are
called ``plus'' resp.\ ``minus'' operators. There is also a pseudodifferential definition $\Lambda
_\pm^{(t )}$ adapted to the situation of a smooth domain $\Omega
$, cf.\ \cite{G15}. For nonsmooth domains, one applies the operators
$\Xi ^t_\pm$ in localizations where a piece of $\Omega $
is carried over to a piece of  $\rnp$.

It is elementary to see by the definition of the spaces $H_q^s(\R^n)$
in terms of Fourier transformation, that the operators define homeomorphisms 
for all $s$: $
\Xi^t _\pm\colon H_q^s(\R^n) \simto H_q^{s- t
}(\R^n)$, $  
\Lambda ^t _\pm\colon H_q^s (\R^n) \simto H_q^{s- t
} (\R^n)$.
The special
interest is that the ``plus''/''minus'' operators also 
 define
homeomorphisms related to $\crnp$ and $\comega$, for all $s\in{\mathbb R}$: 
$
\Xi ^{t }_+\colon \dot H_q^s(\crnp )\simto
\dot H_q^{s- t }(\crnp)$, $
r^+\Xi ^{t }_{-}e^+\colon \ol H_q^s(\rnp )\simto
\ol H_q^{s- t } (\rnp )$, with similar statements  for $\Lambda ^t_\pm$,
and for $
\Lambda^{(t )}_\pm$ relative to $\Omega $.
Moreover, the operators $\Xi ^t _{+}$ and $r^+\Xi ^{t }_{-}e^+$ identify with each other's adjoints
over $\crnp$, because of the support preserving properties.
There is a
similar statement for $\Lambda ^t_+$ and  $r^+\Lambda ^t_-e^+$, and for $\Lambda ^{(t )}_+$ and $r^+\Lambda ^{(
t )}_{-}e^+$ relative to the set $\Omega $.

The special {\it $\mu $-transmission spaces} were 
introduced by
H\"ormander \cite{H66} for $q=2$, and developed  in detail for
$1<q<\infty $ by Grubb \cite{G15}:
\begin{equation}
  \label{eq:2.14}
  \begin{split}
H_q^{\mu  (s)}(\crnp)&=\Xi _+^{-\mu  }e^+\ol H_q^{s- \mu 
}(\rnp)=\Lambda  _+^{-\mu  }e^+\ol H_q^{s- \mu 
}(\rnp)
,\quad \text{if }  s> \mu  -\tfrac1{q'},\\
H_q^{\mu  (s)}(\comega)&=\Lambda  _+^{(-\mu  )}e^+\ol H_q^{s- \mu 
}(\Omega ),\quad\text{if }  s> \mu  -\tfrac1{q'};
  \end{split}
\end{equation}
here $\mu >-1$.
With $\mu =a$, they are the appropriate solution spaces for homogeneous Dirichlet
problems for the operators of order $2a$ that we shall study. For
problems with a nonhomogeneous local Dirichlet condition they enter
with $\mu =a-1$. There holds $H_q^{\mu  (s)}(\comega)\subset H_q^{\mu
  (s')}(\comega)$ for $s>s'$.
In the
first line of \eqref{eq:2.14}, we have
\begin{equation}
  \label{eq:2.14a}
H_q^{\mu  (s)}(\crnp)=\Xi _+^{-\mu  }e^+\ol H_q^{s- \mu 
}(\rnp)=\Xi _+^{-\mu  }\dot H_q^{s- \mu 
}(\crnp)=\dot H_q^{s}(\crnp)
,\quad \text{if } \mu +\tfrac1q> s >\mu  -\tfrac1{q'}.
\end{equation}
On the other hand, when
$s>\mu +\tfrac1q$,
  $\Xi
_+^{-\mu }$ is applied to functions having a jump at $x_n=0$; this results in a singularity $x_n^\mu $ at $x_n=0$. 

The second line in \eqref{eq:2.14} is valid in the case of a $C^\infty
$-domain $\Omega $. In the case where $\Omega $ is $C^{1+\tau }$,
$\tau >0$, we have instead a definition using local coordinates, based on the
definition for the case of a curved halfspace $\R^n_\zeta $
\eqref{eq:2.1}. Here we use the diffeomorphism $F_\zeta $ mapping $\rn_\zeta $
to $\rnp$ and its inverse $F_\zeta ^{-1}$,
$$
F_\zeta (x)=(x',x_n-\zeta (x')),\quad F^{-1}_\zeta (x)=(x',x_n+\zeta (x')),\quad 
$$
defining, for $\mu -\frac1{q'}<t<1+\tau $,
$$
u\in H_q^{\mu (t)}(\ol {\R}_\zeta ) \iff u\circ F_\zeta ^{-1}\in H_q^{\mu (t)}(\crnp),
$$
with the inherited norm ($u\circ F_\zeta ^{-1}$ is also denoted  $F_\zeta ^{-1,*}u$).
For a bounded $C^{1+\tau }$-domain $\Omega $, every point
 $x_0\in \partial\Omega $ has a bounded open neighborhood $U\subset
 \rn$ and a $\zeta \in C^{1+\tau }(\R^{n-1})$, such that after a
 suitable rotation, $\Omega \cap U=\rn_\zeta \cap U$.
 $H_q^{\mu (t)}(\comega)$ is now defined (cf.\ \cite[Def.\ 4.3]{AG23})
  as the set
 of functions $u\in H^t_{loc}(\Omega )$ such that for each $x_0$, with
 a $\varphi \in C_0^\infty (U)$ with $\varphi \equiv 1$ in a
 neighborhood of $x_0$, $(\varphi u)\circ F_\zeta ^{-1}\in
 H_q^{\mu (t)}(\crnp)$ (in the rotated situation).

A norm on $H_q^{\mu (t)}(\comega)$ can be defined as follows: There is a
cover of $\comega$ by bounded open sets $\{U_0,U_1,\dots, U_J\}$, and a partition
of unity $\{\varrho _j\}_{0\le j\le J}$ (with $ \varrho _j\in
C_0^\infty ( U_j, [0,1])$, satisfying $\sum_{0\le j\le J}\varrho _j=1$ on
$\comega$), where the $U_j$
for $j\ge 1$ are neighborhoods of  points $x_j\in \partial\Omega $
with $\Omega \cap U=\rn_{\zeta _i}\cap U$ (after a rotation), $\zeta
_i\in C^{1+\tau }(\rn)$, as described above. Moreover, $\partial\Omega
\subset \bigcup_{1\le j\le J}U_j $ and 
$ \ol U_0\subset \Omega $. Then 
\begin{equation}
  \label{eq:2.15}
\|u\|_{H_q^{\mu (t)}(\comega)}= \bigl(\sum_{1\le j\le J}\|(\varrho _ju)\circ F_{\zeta _j}^{-1}\|^q_{H_q^{\mu (t)}(\crnp)}+\|\varrho _0u\|^q_{H^t_q(\rn)}\bigr)^{\frac1q}
\end{equation}
is a norm on ${H_q^{\mu (t)}(\comega)}$. (This way to define norms over
curved spaces is
recalled e.g.\ in \cite[Sect.\ 8.2]{G09}.)

Further properties of the $\mu $-transmission spaces are described in
detail in \cite{G15}, \cite{G19}, \cite{AG23} and \cite{G23}.

\section{The Dirichlet realization}
Our main hypothesis on $P$ is: 
\begin{hypo}\label{hypo:3.1} 
 Let  $0<a<1$, $\tau >2a$, and $P=\Op(p)$,
where $p\in C^\tau
S^{2a}(\rn\times\rn)$ (is a classical $C^\tau $-symbol of order $2a$).
Moreover, $P$ is strongly elliptic, i.e., $\operatorname{Re}p_0(x,\xi
)\ge c|\xi |^{2a}$ with $c>0$ for $|\xi |\ge 1$, and has the evenness
property:
\begin{equation}
  \label{eq:3.1}
  p_j(x,-\xi )=(-1)^jp_j(x,\xi )  \text{ for all }
  j\in\N_0,\; |\xi |\ge 1,\; x\in\rn.
  \end{equation}
\end{hypo}
\begin{rem}\label{rem:3.2}
One of the convenient properties of the pseudodifferential calculus is that for
elliptic problems, the interior regularity of solutions is dealt with,
once and for all: When $P$ is classical elliptic (i.e., $p_0(x,\xi )\ne
0$ for $|\xi |\ge 1$) of order $m$, then for any open set $\Omega
\subset \rn$, $Pu|_{\Omega }\in
H^s_{q,loc}(\Omega )$ implies  $u|_{\Omega }\in
H^{s+m}_{q,loc}(\Omega )$.
In the case $\tau =\infty $, this holds for $s\in\R$ and was shown already
by Seeley in \cite{S65a} (see also \cite{S65b}) in $H^s_q$-spaces, and it extends to
all scales of function spaces, where pseudodifferential operators are continuous, as indicated in
\cite{G14}. For finite $\tau $ and, say,  $u\in
H^{m-\tau+\varepsilon}_q(\rn)$ for some $\varepsilon>0$, it
follows for $-\tau < s\leq \tau $ e.g.\ from Theorem~9 in Marschall \cite{M92}
after $P$ is reduced to order zero and a standard localization
procedure is applied.
\end{rem}

Now some words on the special case where the symbol is real, $x$-independent and has no lower-order terms,
$p(x,\xi )=p_0(\xi )$. Denote by $p^h\colon \rn\to \C, \xi \mapsto p^h(\xi)$ the homogeneous function
on $\rn$ coinciding with $p_0$ for $|\xi |\ge 1$. The operator
$P^h=\Op(p^h)$ is then a complexified version of the real singular integral operator
$\mathcal L$
studied in many works on generalizations of the fractional Laplacian
(cf.\ e.g.\ Ros-Oton et al.\ \cite{R16}, \cite{RSV17}):
\begin{align}
  \nonumber
  \mathcal Lu(x)&=PV\int_{\rn}(u(x)-u(x+y))K(y)\,dy\\\label{eq:3.2}
                &=\int_{\rn}(u(x)-\tfrac12(
                  u(x+y)+u(x-y)))K(y)\,dy.
                  \end{align}
Here $K\colon \rn\setminus \{0\}\to \C$ is homogeneous of degree $-n-2a$ and smooth in $\rn\setminus \{0\}$ when $p^h$ is so, and even:
$K(-y)=K(y)$ for all $y\neq 0$. In the rotation-invariant case, $\mathcal L=(-\Delta )^{a}$
when $K(y)=c_{n,a}|y|^{-n-2a}$ for a suitable $c_{n,a}>0$. And more generally, this singular integral
definition coincides with our pseudodifferential definition of $P^h$,
when $K=(\mathcal F^{-1}p^h)|_{\rn \setminus \{0\}}$. Note here that $P^h$ differs from $P_0=\Op(p_0(\xi ))$ by
the operator $\mathcal R=\Op(r)$, where $r=p^h-p_0$ is bounded
and supported for $|\xi |\le 1$, hence $\mathcal R$ maps e.g.\ $H_q^s(\rn)\to
H_q^t(\rn)$ for all $s,t\in\R$; it is smoothing. So mapping properties
and regularity results for $\mathcal L$ follow from those for $P_0$
(or $P^h$).

Let $\Omega \subset \rn$ be open and bounded
with a $C^{1+\tau }$-boundary. The homogeneous Dirichlet problem for
$P$ is, for a given function $f$ on $\Omega $ to find  $u$ such that
\begin{equation}
  \label{eq:3.3}
Pu=f\text{ on }\Omega ,\quad u=0\text{ on }\rn\setminus \Omega.    
\end{equation} 
(More precisely, one can write $r^+Pu$ instead of ``$Pu$ on $\Omega
$''.)

From the sesquilinear form $s(u,v)$ obtained by closure on $ \dot
H^a(\overline\Omega )$ of
\begin{equation}
  \label{eq:3.4}
s(u,v)=\int_{\Omega }Pu\,\bar v\, dx,\quad u,v\in C_0^\infty (\Omega
), 
\end{equation}
one defines the Dirichlet realization $P_{D,2}$ in $L_2(\Omega )$ by
the Lax-Milgram lemma. For a general $1<q<\infty $, one likewise 
defines a Dirichlet realization $P_{D,q}$ of $P$ in
$L_q(\Omega )$, namely as the operator acting like $r^+P$ with domain 
 $
D(P_{D,q})=\{u\in \dot H_q^a(\overline\Omega )\mid r^+Pu\in L_q(\Omega
)\}$.
It is shown in \cite{G15} for $\tau =\infty $, \cite{AG23}
for general $\tau >2a$, that these operators have nice solvability
properties,
and their domains are found to equal $a$-transmission spaces
\begin{equation}
  \label{eq:3.5}
D(P_{D,q})=\{u\in \dot H_q^a(\overline\Omega )\mid r^+Pu\in L_q(\Omega
)\}=H_q^{a(2a)}(\comega).  
\end{equation}
By the observations around \eqref{eq:2.14a},
\begin{equation}
  \label{eq:3.5a}
H_q^{a(2a)}(\comega)=\dot H_q^{2a}(\comega)\text{ when }a<\tfrac1q;\quad
H_q^{a(2a)}(\comega)\subset \dot H_q^{a+\frac1q-\varepsilon
}(\comega)\text{ when }a
\ge \tfrac1q,  
\end{equation}
any $\varepsilon >0$. Moreover,  $H_q^{a(2a)}(\comega)$ is when $a>\tfrac1q$ contained in
$\dot H_q^{2a}(\comega)+d^ae^+\ol H^a_q(\Omega )$; recall that $d(x)\sim
\operatorname{dist}(x,\partial\Omega )$.
There is also an exact description
when $1+\frac1q>a>\frac1q$, namely: When $\tau =\infty $, $H_q^{a(2a)}(\comega)=\dot
H_q^{2a}(\comega)+d^aK_0B_{q,q}^{a-\frac1q}(\partial\Omega )$ by \cite{G19},
where $K_0$ is a Poisson operator and $B_{q,q}^{a-\frac1q}(\partial\Omega )$
is a Besov space; and this holds in local coordinates when $\tau $ is finite.
For brevity, we shall use the notation
\begin{equation}
  \label{eq:3.6}
D_q(\comega)= H_q^{a(2a)}(\comega). 
\end{equation}

In the following, we mostly consider a fixed $q$, and denote $P_{D,q}=P_D$.
$P_D$ has a resolvent that is compact in $L_q(\Omega )$, and as accounted
for in  \cite{G23}, the spectrum is contained in a convex sectorial region
opening to the right. Hence the resolvent set $\varrho (P_D)$ contains an obtuse sectorial
region $V_{\delta ,K}$ (the complement of a ``keyhole region''). Here we define  $V_{\delta ,K}$ 
for $0<\delta <\delta _0$ where $0<\delta _0\le
\frac\pi 2$, and $K\ge 0$, by
\begin{equation}
  \label{eq:3.7}
V_{\delta,K} =\{\lambda \in
\C\setminus \{0\}\mid   \operatorname{arg}\lambda \in [\tfrac \pi
2-\delta , \tfrac {3\pi }2+\delta ],
|\lambda |\ge K\}.
\end{equation}
For the actual $P$, $\tfrac\pi 2-\delta _0=\sup \{|\arg p_0(x,\xi )|\mid
x\in\rn, |\xi |\ge 1\}$.

In the $x$-independent real homogeneous  case considered around \eqref{eq:3.2} where
$P^h=\Op(p^h)$ 
identifies with $\mathcal L$, the quadratic form $s^h(u,u)$ (as in \eqref{eq:3.4})
identifies with the form 
\begin{equation}
  \label{eq:3.8}
Q(u)=\tfrac12\int_{\R^{2n}}|u(x)-u(y)|^2K(x-y)\,dxdy\quad \text{for } u\in \dot H^a(\comega),  
\end{equation}
acting on real $u$, cf.\ e.g.\ \cite{R16}.

\section{Auxiliary results on resolvent estimates}\label{sec:4}

Consider a  closed, densely defined linear operator $A\colon
\mathcal{D}(A)\subset X\to X$ on a UMD-space $X$. (Cf.\ e.g.\ Burkholder
\cite{B86}
for the definition and characterizations of UMD-spaces; the
$H_q^s$-spaces are of this kind.) Numerous studies through the times show that estimates of the
resolvent $(A-\lambda )^{-1}$  lead to solvability properties, in
various function spaces, of the heat equation
\begin{equation}
  \label{eq:4.1}
\partial_tu(t)+Au(t)=f(t)\text{ for }t\in I,\quad u(0)=0, 
\end{equation}
where $I=(0,T)$ for $T\in (0,\infty)$ or $I= (0,\infty)$.
A basic problem  is to show \emph{maximal
  $L_p$-regularity}, namely that \eqref{eq:4.1} for any $f\in L_p(I;X)$ has a unique solution $u\colon \overline{I} \to X$ satisfying
\begin{equation}
  \label{eq:4.2}
\partial_tu\text{ and } Au \in L_p(I;X). 
\end{equation}
We note that, if $I=(0,T)$ for some $T<\infty$, then this is equivalent to $u\in H^1_p(I;X)\cap L_p(I;\mathcal{D}(A))$.
This is usually relatively easy to obtain for $p=2$ and a Hilbert space $X$; the difficulty when
$p\ne 2$ and general $X$ for differential and pseudodifferential realizations is
linked to 
the fact that multiplier theorems valued in Hilbert spaces do not
in general
extend to Banach spaces. The difficulty was overcome by a deeper
analysis in \cite{GK93}, \cite{G95} for operators in the Boutet de
Monvel calculus (including differential boundary value problems and
nontrivial initial- and boundary conditions), in a
smooth setting. A nonsmooth case stemming from the Stokes problem was
treated in Abels \cite{A05}.

To include  nonsmooth settings in general, other tools have been introduced. We
shall in the present paper take advantage of the concept of $\mathcal
R$-boundedness, as developed 
 through works
of  Da Prato and Grisvard, Lamberton, Dore and Venni, Cl\'ement, Pr\"uss, Hieber,
Denk,
Weiss, Bourgain and others,
and explained very nicely in Denk-Hieber-Pr\"uss
\cite{DHP03}, which applies it to vector-valued nonsmooth differential operator
problems. The theory is also included in the book Pr\"uss-Simonett \cite{PS16}.

$\mathcal R$-boundedness of a family $\mathcal T$ of bounded linear operators
$T\colon X\to Y$ is defined as follows:

\begin{defn}\label{defn:4.1}
    Let $X$ and $Y$ be Banach spaces, and let
$\mathcal T$ be a family of operators $T$ in $ \mathcal L(X,Y)$. $\mathcal T$ is said
to be $\mathcal R$-bounded if there is a constant $C\ge 0$ and a $p\in
[1,\infty )$ such that  there holds: For each $N\in \N$,
$\{T_j\}_{j=1}^N\subset 
\mathcal T$, $\{x_j\}_{j=1}^N\subset X$, and $\{\varepsilon
_j\}_{j=1}^N$ belonging to a system of 
independent and identically distributed symmetric
$\{-1,+1\}$-valued random variables $\varepsilon $ on  some
probability space $(\Omega ,\mathcal M,\mu )$,
\begin{equation}
  \label{eq:4.3}
\|\sum_{j=1}^N\varepsilon _jT_jx_j\|_{L_p(\Omega ,Y)}\le C\|\sum_{j=1}^N\varepsilon _jx_j\|_{L_p(\Omega ,X)}.  
\end{equation}
\end{defn}
\begin{rem}
  \label{rem:4.1a}
  As the probability space and random variables,
one can for example take $(\Omega ,\mathcal M,\mu )=([0,1], \mathcal
B([0,1]),\lambda )$, where $\mathcal B([0,1])$ stands for the  Borel
$\sigma $-algebra, $\lambda $ for the Lebesgue measure, and the random
variables are given by the Rademacher functions, as explained in
detail  e.g.\ in
Denk \cite{D21}.
\end{rem}

An alternative formulation is given in Denk and Seiler
\cite{DS15}:

\begin{defn}\label{defn:4.2}
  Let $p\in [1,\infty )$. Denote $
Z_N=\{(z_1,\dots,z_N)\mid z_j\in \{-1,+1\}\text{ for all }j\}$, a
subset of $\R^N$.
Let $X$ and $Y$ be Banach spaces.

A subset $\mathcal T$ of the bounded
linear operators $\mathcal L(X,Y)$ is $\mathcal R$-bounded if there is a
constant $C\ge 0$ such that for every choice of $N\in \N$ and every
choice of $x_1,\dots,x_N$ in $X$ and $T_1,\dots,T_N$ in $\mathcal T$,
\begin{equation}
  \label{eq:4.4}
\bigl(\sum_{z\in Z_N}\|\sum _{j=1}^N z_jT_jx_j\|_Y^p\bigr)^{1/p}\le
C\bigl(\sum_{z\in Z_N}\|\sum _{j=1}^N z_jx_j\|_X^p\bigr)^{1/p}.
\end{equation}
\end{defn}

The finiteness for one $p\in [1,\infty
)$ implies the finiteness for all other  $p\in [1,\infty
)$.
The best constant $C$, denoted $\mathcal R_{\mathcal L(X,Y)}
(\mathcal T )$ or just $\mathcal R(\mathcal T)$, is called the
$\mathcal R$-bound of $\mathcal T$ (for some fixed $p$).
 An $\mathcal R$-bounded set is norm-bounded. Finite families $\mathcal
T$ are $\mathcal R$-bounded. Norm bounds and $\mathcal R$-bounds are
equivalent if $X$ and $Y$ are
Hilbert spaces. 

The $\mathcal R$-boundedness is preserved under addition and composition
(\cite[Prop.\ 3.4]{DHP03}):

\begin{prop}\label{prop:4.3}
    $1^\circ$ Let $X$ and $Y$ be Banach
spaces, and let $\mathcal T$ and $\mathcal S$ $\subset \mathcal L(X,Y)$ be $\mathcal
R$-bounded. Then 
$$
\mathcal T+\mathcal S=\{T+S\mid T\in\mathcal T, S\in \mathcal S\}
$$
is $\mathcal R$-bounded, and $\mathcal R\{\mathcal T+\mathcal S\}\le \mathcal R\{\mathcal
T\}+\mathcal R\{\mathcal S\} $.

 $2^\circ$ Let $X$, $Y$ and $Z$ be Banach
spaces, and let $\mathcal T\subset \mathcal L(X,Y)$ and $\mathcal S\subset \mathcal L(Y,Z)$
be $\mathcal
R$-bounded. Then 
$$
\mathcal S\mathcal T=\{ST\mid T\in\mathcal T, S\in \mathcal S\}
$$
is $\mathcal R$-bounded, and $\mathcal R\{\mathcal S\mathcal T\}\le \mathcal R\{\mathcal
S\}\mathcal R\{\mathcal T\} $.
\end{prop}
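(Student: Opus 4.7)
The plan is to verify each claim directly from Definition~\ref{defn:4.1} by testing the defining inequality on arbitrary finite configurations, with no subtler machinery needed; the argument is essentially bookkeeping once one sees how to align the random-sum norms.

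For part $1^\circ$, I would fix $N\in\N$, operators $T_j\in\mathcal T$ and $S_j\in\mathcal S$, vectors $x_j\in X$, and Rademacher variables $\varepsilon_j$ on a common probability space. Since $(T_j+S_j)x_j=T_jx_j+S_jx_j$, the triangle inequality in $L_p(\Omega,Y)$ gives
\begin{equation*}
\|\sum_{j=1}^N\varepsilon_j(T_j+S_j)x_j\|_{L_p(\Omega,Y)}
\le \|\sum_{j=1}^N\varepsilon_jT_jx_j\|_{L_p(\Omega,Y)}+\|\sum_{j=1}^N\varepsilon_jS_jx_j\|_{L_p(\Omega,Y)}.
\end{equation*}
Applying the defining inequality \eqref{eq:4.3} for $\mathcal T$ and $\mathcal S$ to the two summands immediately yields the bound with constant $\mathcal R(\mathcal T)+\mathcal R(\mathcal S)$.

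For part $2^\circ$, the key trick is to insert an auxiliary family of vectors in $Y$. Fix $N\in\N$, $T_j\in\mathcal T$, $S_j\in\mathcal S$, $x_j\in X$, and set $y_j:=T_jx_j\in Y$. Applying $\mathcal R$-boundedness of $\mathcal S$ (on the family $\{S_j\}$ acting on $\{y_j\}$) gives
\begin{equation*}
\|\sum_{j=1}^N\varepsilon_jS_jT_jx_j\|_{L_p(\Omega,Z)}=\|\sum_{j=1}^N\varepsilon_jS_jy_j\|_{L_p(\Omega,Z)}\le \mathcal R(\mathcal S)\,\|\sum_{j=1}^N\varepsilon_jy_j\|_{L_p(\Omega,Y)},
\end{equation*}
and then $\mathcal R$-boundedness of $\mathcal T$ applied to the right-hand side yields
\begin{equation*}
\|\sum_{j=1}^N\varepsilon_jy_j\|_{L_p(\Omega,Y)}=\|\sum_{j=1}^N\varepsilon_jT_jx_j\|_{L_p(\Omega,Y)}\le \mathcal R(\mathcal T)\,\|\sum_{j=1}^N\varepsilon_jx_j\|_{L_p(\Omega,X)}.
\end{equation*}
Concatenation of these two estimates gives the desired constant $\mathcal R(\mathcal S)\,\mathcal R(\mathcal T)$.

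There is essentially no hard step here: both arguments are a clean application of the definition. The only small point to keep in mind is that $\mathcal R$-boundedness does not require the operators appearing in \eqref{eq:4.3} to be distinct, so we are free to use the paired choice $(T_j,S_j)$ and feed the vectors $y_j=T_jx_j$ into the inequality for $\mathcal S$; this is exactly what allows the chained estimate to go through. Since the finiteness of the $\mathcal R$-bound for one $p\in[1,\infty)$ is equivalent to that for any other, the resulting constants are well-defined independently of the chosen $p$.
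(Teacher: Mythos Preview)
Your proof is correct and is the standard direct verification from the definition; the paper itself does not prove this proposition but simply cites \cite[Prop.~3.4]{DHP03}, so your argument is in fact more self-contained than what the paper provides.
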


The fundamental interest of this concept is that it leads to a
criterion for maximal $L_p$-regularity, shown in \cite[Theorem~4.4]{DHP03}:

\begin{thm}\label{thm:4.4}
    Let  $1<p<\infty $ and $X$ be a UMD-space. Problem \eqref{eq:4.1} has maximal $L_p$-regularity
 on $I=\mathbb R_+$ if and only if $V_{\delta,0}\subset \rho(A)$ and the family $\{\lambda (
 A-\lambda )^{-1}\mid \lambda \in V_{\delta ,0}\}$ in $\mathcal L(X)$ is $\mathcal
R$-bounded  for some $\delta \in (0,\frac{\pi}2)$.
\end{thm}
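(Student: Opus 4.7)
The plan is to convert \eqref{eq:4.1} (after extension of $f$ by zero to all of $\R$) into an operator-valued Fourier multiplier problem in time, and to reduce both implications to Weis's operator-valued Mikhlin theorem and its converse on UMD-spaces. Taking the Fourier transform in $t$ of $\partial_t u+Au=f$ gives $(A+i\tau)\hat u(\tau)=\hat f(\tau)$, so formally $\widehat{\partial_tu}(\tau)=m(\tau)\hat f(\tau)$ with
\begin{equation*}
m(\tau)=i\tau(A+i\tau)^{-1}=-(-i\tau)\bigl(A-(-i\tau)\bigr)^{-1}.
\end{equation*}
Maximal $L_p$-regularity is then equivalent to $m$ being a bounded Fourier multiplier from $L_p(\R;X)$ to itself (once causality is taken into account), which is exactly the setting of Weis's theorem.

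For the sufficiency direction, I would first observe that the imaginary axis $\{-i\tau:\tau\in\R\setminus\{0\}\}$ lies in $V_{\delta,0}$, so the $\mathcal R$-boundedness hypothesis gives that $\{m(\tau):\tau\ne 0\}$ is $\mathcal R$-bounded. To apply Weis's operator-valued Mikhlin theorem I also need $\{\tau m'(\tau):\tau\ne0\}$ to be $\mathcal R$-bounded; but a direct computation using $(A+i\tau)^{-1}=A^{-1}-A^{-1}(i\tau)(A+i\tau)^{-1}$ (or more simply that $(i\tau)^2(A+i\tau)^{-2}=-m(\tau)^2$) yields
\begin{equation*}
\tau m'(\tau)=m(\tau)-m(\tau)^2,
\end{equation*}
which is $\mathcal R$-bounded by Proposition \ref{prop:4.3} since sums and products preserve $\mathcal R$-bounds. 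Weis's theorem, applicable because $X$ is UMD, then yields $\|\partial_tu\|_{L_p(\R;X)}\le C\|f\|_{L_p(\R;X)}$, and the equation itself controls $\|Au\|_{L_p}$. Causality—that the solution for $f$ supported in $\rp$ is again supported in $\rp$—follows because the assumption $V_{\delta,0}\subset\rho(A)$ with the $\mathcal R$-bound implies $-A$ generates a bounded analytic semigroup, so $m$ extends holomorphically into a lower half-plane.

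For the necessity direction, maximal $L_p$-regularity first forces $A$ to be sectorial: a standard translation/Laplace transform argument applied to $f(t)=e^{-\lambda t}x$ for $\re\lambda>0$ large gives bounds on $(A+\lambda)^{-1}$ that extend by a Neumann series to some $V_{\delta,0}\subset\rho(A)$. The bounded solution operator $f\mapsto \partial_tu$ is then precisely Fourier multiplication by $m(\tau)$ on $L_p(\R;X)$. Here I would invoke the Clément--Prüss converse of Weis, which is valid on UMD-spaces: boundedness of an operator-valued Fourier multiplier on $L_p(\R;X)$ automatically entails $\mathcal R$-boundedness of $\{m(\tau):\tau\ne 0\}$, and hence of $\{\lambda(A-\lambda)^{-1}:\lambda\in i\R\setminus\{0\}\}$. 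The extension from the imaginary axis to the full sector $V_{\delta,0}$ is obtained by a Taylor/Neumann expansion of $(A-\lambda)^{-1}$ around a point $i\tau_0$, again using Proposition \ref{prop:4.3} to preserve $\mathcal R$-bounds under the resulting sums and compositions.

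The main obstacle is not conceptual but the two deep vector-valued harmonic-analysis black boxes—the operator-valued Mikhlin theorem of Weis and its Clément--Prüss converse—both of which genuinely require the UMD hypothesis on $X$ and are not accessible by elementary arguments. Given those, the remaining work is the explicit derivative identity $\tau m'(\tau)=m(\tau)-m(\tau)^2$, the causality/semigroup check, and the passage from the axis to the sector, all of which are routine consequences of Proposition \ref{prop:4.3}.
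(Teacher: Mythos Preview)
The paper does not prove Theorem~\ref{thm:4.4} at all: it is stated as a known result, quoted from Denk--Hieber--Pr\"uss \cite[Theorem~4.4]{DHP03}, and used as a black box throughout Sections~\ref{sec:5} and~6. So there is no ``paper's own proof'' to compare against.

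Your sketch is the standard route to this result and is essentially what \cite{DHP03} does: the sufficiency direction is Weis's theorem (operator-valued Mikhlin on UMD spaces applied to $m(\tau)=i\tau(A+i\tau)^{-1}$, with the derivative identity you wrote), and the necessity direction is the Cl\'ement--Pr\"uss converse that $L_p$-boundedness of an operator-valued multiplier forces $\mathcal R$-boundedness of its range. Your identification of these two ingredients as the genuine, non-elementary inputs is accurate; the remaining steps (the derivative computation, the Neumann-series extension from the imaginary axis to a sector, causality via analyticity of the semigroup) are indeed routine once Proposition~\ref{prop:4.3} is available. One minor point: for the necessity direction, the sectoriality of $A$ on some $V_{\delta,0}$ is usually obtained not by testing with exponentials directly but as a consequence of the fact that maximal $L_p$-regularity implies $-A$ generates an analytic semigroup (Dore's theorem); this gives the required resolvent set and norm bounds, after which the $\mathcal R$-bound on the axis is upgraded to the sector as you describe.
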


Note that $\mathcal R$-boundedness of $\{\lambda (  A-\lambda )^{-1}\mid
\lambda \in V_{\delta ,K}\}$ implies that for some $k >K$,  $\mathcal R$-boundedness holds for $\{\lambda (  A+k -\lambda )^{-1}\mid
\lambda \in V_{\delta ,0}\}$. Then
the shifted operator $  A+k $ has maximal $L_q$-regularity on
$\mathbb R_+$, and $  A$ itself has it on finite intervals $I=(0,T)$.

We shall say that {\it $A$ is $\mathcal R$-sectorial on $V_{\delta ,K}$} when $V_{\delta ,K}\subset \rho(A)$ and
\begin{equation}
  \label{eq:4.5}
\mathcal R_{\mathcal L(X)}\{\lambda (A-\lambda )^{-1} \mid \lambda \in V_{\delta ,K} \}<\infty .
\end{equation}

One of the reasons that Theorem~\ref{thm:4.4} is particularly useful, is that $\mathcal
R$-sectoriality is preserved under suitable perturbations of $ A$.

\begin{prop}
  \label{prop:4.5}
    $1^\circ$ Let $A$  satisfy $V_{\delta,K}\subset \rho(A)$ and
  \begin{equation}
    \label{eq:4.6}
\|\lambda (  A-\lambda )^{-1}\|_{\mathcal L(X)}\le C\quad \text{for all }\lambda \in V_{\delta ,K}.
  \end{equation}
Let $S\colon D(  A)\to X$ be linear and satisfy 
\begin{equation}
  \label{eq:4.7}
\|Su\|_X\le \alpha \|  Au \|_X+\beta \|u\|_X\quad \text{for all }u\in D( 
A).
\end{equation}
Then when $\alpha$ is sufficiently small, there exists $K_1\ge K$ and $C'$
such that $V_{\delta,K_1}\subset \rho(A+S)$ and
$$
\|\lambda (  A+S-\lambda )^{-1}\|_{\mathcal L(X)}\le C'\quad\text{for all }\lambda \in V_{\delta ,K_1}.
$$

$2^\circ$ Assume in addition that $\{\lambda (  A-\lambda )^{-1}\mid
\lambda \in V_{\delta ,K}\}$ is $\mathcal R$-bounded. Then, for sufficiently
small $\alpha >0$ there is a $K_2\ge K$ such that $V_{\delta,K_2}\subset \rho(A+S)$ and   $\{\lambda (  A+S-\lambda )^{-1}\mid
\lambda \in V_{\delta ,K_2}\}$ is $\mathcal R$-bounded.
\end{prop}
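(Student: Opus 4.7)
The starting point for both parts is the factorisation
\[
A+S-\lambda = (I+S(A-\lambda)^{-1})(A-\lambda),
\]
valid for $\lambda\in\rho(A)$. This reduces the construction of $(A+S-\lambda)^{-1}$ to inverting $I+S(A-\lambda)^{-1}$ by a Neumann series, provided $S(A-\lambda)^{-1}$ is small enough — in operator norm for $1^\circ$, in $\mathcal{R}$-bound for $2^\circ$. All other steps then reduce to bookkeeping via Proposition~\ref{prop:4.3}.

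For $1^\circ$, I would apply \eqref{eq:4.7} to $u=(A-\lambda)^{-1}f$ together with the standard identity $A(A-\lambda)^{-1}=I+\lambda(A-\lambda)^{-1}$, giving $\|A(A-\lambda)^{-1}\|_{\mathcal{L}(X)}\le 1+C$ on $V_{\delta,K}$ by \eqref{eq:4.6}, and $\|(A-\lambda)^{-1}\|_{\mathcal{L}(X)}\le C/|\lambda|$. This yields
\[
\|S(A-\lambda)^{-1}\|_{\mathcal{L}(X)} \le \alpha(1+C) + \beta C/|\lambda|.
\]
If $\alpha(1+C)\le 1/3$ and $K_1\ge K$ is so large that $\beta C/K_1\le 1/3$, then the right-hand side is at most $2/3$ on $V_{\delta,K_1}$; the Neumann series converges uniformly, so $V_{\delta,K_1}\subset\rho(A+S)$, and $(A+S-\lambda)^{-1}=(A-\lambda)^{-1}(I+S(A-\lambda)^{-1})^{-1}$ satisfies the claimed uniform bound.

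For $2^\circ$ the same factorisation is used, but the operator-norm estimates must be upgraded to $\mathcal{R}$-bounds. The key auxiliary inequality is
\[
\mathcal{R}\{S(A-\lambda)^{-1}\mid \lambda\in V_{\delta,K_2}\} \le \alpha\,\mathcal{R}\{A(A-\lambda)^{-1}\} + \beta\,\mathcal{R}\{(A-\lambda)^{-1}\}.
\]
To prove this I would go back to Definition~\ref{defn:4.1}: for a finite sequence $\lambda_1,\ldots,\lambda_N\in V_{\delta,K_2}$ and $x_1,\ldots,x_N\in X$, the map $\omega\mapsto u(\omega)=\sum_{j=1}^N\varepsilon_j(\omega)(A-\lambda_j)^{-1}x_j$ takes values in $D(A)$, so \eqref{eq:4.7} applies pointwise in $\omega$; raising to the $p$-th power and integrating over the probability space transfers the scalar relative bound to an $L_p(\Omega;X)$-estimate, and the two resulting random sums are controlled by the $\mathcal{R}$-bounds of $\{A(A-\lambda)^{-1}\}$ and $\{(A-\lambda)^{-1}\}$. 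Proposition~\ref{prop:4.3} and $A(A-\lambda)^{-1}=I+\lambda(A-\lambda)^{-1}$ give $\mathcal{R}\{A(A-\lambda)^{-1}\}\le 1+C_0$ where $C_0$ denotes the $\mathcal{R}$-bound in \eqref{eq:4.5}, and the contraction principle applied to the scalar factor $\lambda^{-1}$ in $(A-\lambda)^{-1}=\lambda^{-1}\cdot\lambda(A-\lambda)^{-1}$ yields $\mathcal{R}\{(A-\lambda)^{-1}\}_{V_{\delta,K_2}}\le C_0/K_2$.

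Choosing $\alpha$ small and $K_2$ large, I can then arrange $\mathcal{R}\{S(A-\lambda)^{-1}\}\le 1/2$ on $V_{\delta,K_2}$. Iterating the composition rule of Proposition~\ref{prop:4.3} gives $\mathcal{R}\{(S(A-\lambda)^{-1})^k\}\le 2^{-k}$, and summing the resulting geometric series yields $\mathcal{R}\{(I+S(A-\lambda)^{-1})^{-1}\mid\lambda\in V_{\delta,K_2}\}\le 2$. One further application of the composition rule to $\lambda(A+S-\lambda)^{-1}=\lambda(A-\lambda)^{-1}(I+S(A-\lambda)^{-1})^{-1}$ produces the desired $\mathcal{R}$-bound. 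The only step that is not purely formal is the promotion of the scalar relative bound \eqref{eq:4.7} to an $\mathcal{R}$-bound estimate for $\{S(A-\lambda)^{-1}\}$, and I expect this pointwise-in-$\omega$ argument to be the main obstacle; once it is secured, the rest follows mechanically from Proposition~\ref{prop:4.3}.
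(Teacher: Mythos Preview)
Your argument is correct and is precisely the standard Neumann-series perturbation argument that the paper invokes by reference: the paper gives no independent proof but simply cites \cite[Theorem~1.5]{DHP03} for $1^\circ$ and \cite[Proposition~4.3]{DHP03} for $2^\circ$, both of which proceed via the factorisation $A+S-\lambda=(I+S(A-\lambda)^{-1})(A-\lambda)$ and the estimates you describe. Your pointwise-in-$\omega$ promotion of \eqref{eq:4.7} to an $\mathcal R$-bound is exactly the mechanism used there (and is not an obstacle: linearity of $S$ makes $Su(\omega)$ the relevant Rademacher sum, and Minkowski in $L_p(\Omega)$ handles the two-term bound); the contraction-principle step for $\{(A-\lambda)^{-1}\}$ is the same one the paper later spells out in Theorem~\ref{thm:4.7}~$1^\circ$, and the summation of the geometric series in $\mathcal R$-bounds is justified by \cite[Proposition~4.8]{DHP03}, which the paper itself invokes in the proof of Theorem~\ref{thm:5.8}.
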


\begin{proof} $1^\circ$ has been known for many years. A version is
proved in  \cite[Theorem~1.5]{DHP03}, which adapts straightforwardly  to our sectorial sets. 
$2^\circ$ is an adaptation of \cite[Prop.~4.3]{DHP03} in a similar way.
\end{proof}

We shall supply these results with some further properties that are
essential for our studies here:

\begin{lem}
  \label{lem:4.6}
  Let $X,Y,Y_0,Y_1$ be Banach spaces satisfying
$Y_1\subset Y\subset Y_0$, with continuous, dense injections. Assume that for some
$\theta \in (0,1)$, $C>0$,
$$
\|x\|_Y\le C\|x\|_{Y_0}^{\theta }\|x\|_{Y_1}^{1-\theta }\quad \text{for all
}x\in Y_1.
$$
Then for any operator family $\mathcal T$ in $\mathcal L(X,Y_0)\cap \mathcal L(X,Y_1)$,
the $\mathcal R$-bound of the operators considered as elements of $\mathcal
L(X,Y)$ satisfies
$$
\mathcal R_{\mathcal L(X,Y)}(\mathcal T)\le C \mathcal R_{\mathcal L(X,Y_0)}(\mathcal T)^{\theta }\mathcal R_{\mathcal L(X,Y_1)}(\mathcal T)^{1-\theta }.
$$
\end{lem}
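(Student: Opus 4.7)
The plan is to derive the result directly from the pointwise interpolation inequality by integrating against the Rademacher sums that appear in Definition~\ref{defn:4.1}.

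Fix $p\in[1,\infty)$ and a probability space $(\Omega,\mathcal M,\mu)$ carrying an independent symmetric $\{-1,+1\}$-valued sequence $\{\varepsilon_j\}$. Given $N\in\N$, operators $T_1,\dots,T_N\in\mathcal T$, and elements $x_1,\dots,x_N\in X$, set $S(\omega)=\sum_{j=1}^N\varepsilon_j(\omega)T_jx_j$. Since each $T_j$ maps $X$ into $Y_1\subset Y$, the element $S(\omega)$ lies in $Y_1$ for every $\omega$, so the hypothesized interpolation inequality yields pointwise
\begin{equation*}
\|S(\omega)\|_Y\le C\,\|S(\omega)\|_{Y_0}^{\theta}\,\|S(\omega)\|_{Y_1}^{1-\theta}.
\end{equation*}

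Raising to the $p$-th power and integrating over $\Omega$, then applying H\"older's inequality with conjugate exponents $1/\theta$ and $1/(1-\theta)$, gives
\begin{equation*}
\int_\Omega\|S\|_Y^p\,d\mu\le C^p\Bigl(\int_\Omega\|S\|_{Y_0}^p\,d\mu\Bigr)^{\theta}\Bigl(\int_\Omega\|S\|_{Y_1}^p\,d\mu\Bigr)^{1-\theta},
\end{equation*}
which upon taking $p$-th roots reads
\begin{equation*}
\|S\|_{L_p(\Omega,Y)}\le C\,\|S\|_{L_p(\Omega,Y_0)}^{\theta}\,\|S\|_{L_p(\Omega,Y_1)}^{1-\theta}.
\end{equation*}

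Now I apply the definition of $\mathcal R$-boundedness on each factor: with $R_i:=\mathcal R_{\mathcal L(X,Y_i)}(\mathcal T)$ for $i=0,1$,
\begin{equation*}
\|S\|_{L_p(\Omega,Y_i)}\le R_i\,\Bigl\|\sum_{j=1}^N\varepsilon_j x_j\Bigr\|_{L_p(\Omega,X)}.
\end{equation*}
Substituting these into the previous display yields
\begin{equation*}
\Bigl\|\sum_{j=1}^N\varepsilon_j T_jx_j\Bigr\|_{L_p(\Omega,Y)}\le C\,R_0^{\theta}\,R_1^{1-\theta}\,\Bigl\|\sum_{j=1}^N\varepsilon_j x_j\Bigr\|_{L_p(\Omega,X)},
\end{equation*}
and since the choices of $N$, $T_j$, $x_j$ were arbitrary, the claimed bound on $\mathcal R_{\mathcal L(X,Y)}(\mathcal T)$ follows from Definition~\ref{defn:4.1}. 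The only minor subtlety is verifying that one may indeed use the same $p$ to compute all three $\mathcal R$-bounds, which is legitimate by the standard Kahane-type equivalence between different $p$-exponents for Rademacher sums (this is why Definition~\ref{defn:4.1} is independent of $p$). There is no substantial obstacle here; the proof is essentially a two-line application of pointwise interpolation followed by H\"older.
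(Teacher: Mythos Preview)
Your proof is correct and follows essentially the same approach as the paper: apply the pointwise interpolation inequality to the Rademacher sum $S(\omega)=\sum_j \varepsilon_j T_j x_j$, pass to $L_p(\Omega)$-norms via H\"older, and then invoke the $\mathcal R$-bounds in $Y_0$ and $Y_1$. The paper compresses your first three displays into a single line without naming H\"older explicitly, but the argument is identical.
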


\begin{proof}
Follows from the definition of $\mathcal R$-boundedness:

When $\Omega $, $\varepsilon _j$, $T_j\in \mathcal T$ and $x_j$ are as in Definition~\ref{defn:4.1},
\begin{align*}
\|\sum_{j=1}^N\varepsilon _jT_jx_j\|_{L_p(\Omega ,Y)}&\le C
\|\sum_{j=1}^N\varepsilon _jT_jx_j\|_{L_p(\Omega ,Y_0)}^{\theta }
\|\sum_{j=1}^N\varepsilon _jT_jx_j\|_{L_p(\Omega ,Y_1)}^{1-\theta }\\
&\le C \mathcal R_{\mathcal L(X,Y_0)}(\mathcal T)^{\theta }\mathcal R_{\mathcal
L(X,Y_1)}(\mathcal T)^{1-\theta }\|\sum_{j=1}^N\varepsilon
_jx_j\|_{L_p(\Omega ,X)}.
\end{align*}
 \end{proof}

 \begin{thm}
   \label{thm:4.7}
   Let $A$ be a closed, densely defined
linear operator in a Banach space $X$, such that $A$ is $\mathcal
R$-sectorial over $V_{\delta ,K}$. Let $Y$ be a Banach space
satisfying $D(A)\subset Y\subset X$ with dense, continuous
injections, and assume that for some $\theta \in [0,1]$ and $C_0>0$,
\begin{equation}
  \label{eq:4.8}
\|u\|_Y\le C_0\|u\|_{X}^{\theta }\|u\|_{D(A)}^{1-\theta}\quad \text{for all }u\in D(A).
\end{equation}
\begin{enumerate}
\item[$1^\circ$] With $$
C_1=\mathcal R_{\mathcal L(X)}\{\lambda (A-\lambda )^{-1}\mid
\lambda \in V_{\delta ,K}\},
$$ one has for any $K_1\ge K$ with $K_1>0$ that the $\mathcal R$-bound of $(A-\lambda )^{-1}$ over $V_{\delta
,K_1}$ satisfies
\begin{equation}
  \label{eq:4.9}
\mathcal R_{\mathcal L(X)}\{ (A-\lambda )^{-1}\mid
\lambda \in V_{\delta ,K_1}\}\le 2C_1/K_1. 
\end{equation}
\item[$2^\circ$] We have
\begin{equation}
  \label{eq:4.10}
\mathcal R_{\mathcal L(X,D(A))}\{(A-\lambda )^{-1}\mid
\lambda \in V_{\delta ,K}\}=C_2<\infty ,
\end{equation}
and when $S\in \mathcal L(D(A),X)$, then $\{S(A-\lambda )^{-1}\mid
\lambda \in V_{\delta ,K}\}$ is $\mathcal R_{\mathcal L(X)}$-bounded.
\item[$3^\circ$] Let $S\in \mathcal L(Y,X)$. With $\theta $ as in \eqref{eq:4.8}, there
is a constant $C$ such that for all $K_1\ge K$ with $K_1>0$,
$$
\mathcal R_{\mathcal L(X)}\{ S(A-\lambda )^{-1}\mid
\lambda \in V_{\delta ,K_1}\}\le CK_1^{-\theta }.
$$
\end{enumerate}
 \end{thm}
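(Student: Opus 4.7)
The plan is to derive the three statements in turn from the $\mathcal{R}$-sectoriality hypothesis, using the graph-norm structure of $D(A)$, the composition and sum rules of Proposition~\ref{prop:4.3}, the interpolation Lemma~\ref{lem:4.6}, and the (complex) Kahane contraction principle.

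For $1^\circ$, I would write $(A-\lambda)^{-1}=\lambda^{-1}\cdot\lambda(A-\lambda)^{-1}$ and invoke the Kahane contraction principle: multiplying an $\mathcal{R}$-bounded family by complex scalars $c_j$ with $|c_j|\le M$ preserves $\mathcal{R}$-boundedness, with the $\mathcal{R}$-bound multiplied by at most $2M$, the factor $2$ being the standard cost of splitting complex scalars into real and imaginary parts. Since $|\lambda|^{-1}\le K_1^{-1}$ on $V_{\delta,K_1}$ with $K_1>0$, this gives \eqref{eq:4.9} directly.

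For $2^\circ$, the graph norm $\|u\|_{D(A)}=\|u\|_X+\|Au\|_X$ makes $\mathcal{R}$-boundedness of $\{(A-\lambda)^{-1}\}$ in $\mathcal{L}(X,D(A))$ equivalent to simultaneous $\mathcal{R}$-boundedness in $\mathcal{L}(X)$ of $\{(A-\lambda)^{-1}\}$ and $\{A(A-\lambda)^{-1}\}$. The identity
\begin{equation*}
A(A-\lambda)^{-1}=I+\lambda(A-\lambda)^{-1}
\end{equation*}
presents the second family as the sum of the singleton $\{I\}$ and the hypothesized $\mathcal{R}$-bounded family $\{\lambda(A-\lambda)^{-1}\}$, so Proposition~\ref{prop:4.3}.$1^\circ$ applies. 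For the first family, $1^\circ$ covers the case $K>0$; if $K=0$, then $V_{\delta,0}\subset\rho(A)$ forces $0\in\rho(A)$, and the decomposition $(A-\lambda)^{-1}=A^{-1}+A^{-1}\cdot\lambda(A-\lambda)^{-1}$ realizes it as the sum of a singleton and the composition of $\{A^{-1}\}$ with $\{\lambda(A-\lambda)^{-1}\}$, again $\mathcal{R}$-bounded by Proposition~\ref{prop:4.3}. This establishes \eqref{eq:4.10}. The second assertion of $2^\circ$ then follows immediately: for fixed $S\in\mathcal{L}(D(A),X)$, the family $\{S(A-\lambda)^{-1}\}\subset\mathcal{L}(X)$ is the composition of the singleton $\{S\}$ with $\{(A-\lambda)^{-1}\}\subset\mathcal{L}(X,D(A))$, so Proposition~\ref{prop:4.3}.$2^\circ$ yields an $\mathcal{R}$-bound of at most $\|S\|_{\mathcal{L}(D(A),X)}\cdot C_2$.

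For $3^\circ$, I apply Lemma~\ref{lem:4.6} with $Y_0=X$, $Y_1=D(A)$, and the interpolation inequality \eqref{eq:4.8}, to $\mathcal{T}=\{(A-\lambda)^{-1}\mid \lambda\in V_{\delta,K_1}\}$. Combining the bound from $1^\circ$, $\mathcal{R}_{\mathcal{L}(X)}(\mathcal{T})\le 2C_1/K_1$, with the $K_1$-independent bound from $2^\circ$, $\mathcal{R}_{\mathcal{L}(X,D(A))}(\mathcal{T})\le C_2$, the lemma gives
\begin{equation*}
\mathcal{R}_{\mathcal{L}(X,Y)}(\mathcal{T})\le C\,(2C_1/K_1)^{\theta}\,C_2^{1-\theta}=C'\,K_1^{-\theta}.
\end{equation*}
A final composition with the singleton $\{S\}\subset\mathcal{L}(Y,X)$, via Proposition~\ref{prop:4.3}.$2^\circ$, converts this into the claimed $\mathcal{R}$-bound for $\{S(A-\lambda)^{-1}\}$ in $\mathcal{L}(X)$. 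The only delicate bookkeeping point is the $K=0$ subcase of $2^\circ$ handled above; the remainder is a direct application of the composition, sum, and contraction rules for $\mathcal{R}$-bounds.
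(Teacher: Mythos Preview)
Your proof is correct and follows essentially the same strategy as the paper: Kahane's contraction principle for $1^\circ$, the identity $A(A-\lambda)^{-1}=I+\lambda(A-\lambda)^{-1}$ plus the sum and product rules for $2^\circ$, and Lemma~\ref{lem:4.6} combined with $1^\circ$ and $2^\circ$ for $3^\circ$. The only cosmetic difference is in $2^\circ$: you argue via the graph-norm equivalence and split into the cases $K>0$ and $K=0$, whereas the paper picks an arbitrary $\lambda_0\in\rho(A)$ and writes $(A-\lambda)^{-1}=(A-\lambda_0)^{-1}\,(A-\lambda_0)(A-\lambda)^{-1}$, which handles all $K\ge 0$ uniformly and slightly shortens the argument.
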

\begin{proof} $1^\circ$. Let $N\in \N$,
 $\{x_j\}_{j=1}^N\subset X$, and $\{\varepsilon
_j\}_{j=1}^N$ be as in Definition~\ref{defn:4.1}, $\{\lambda_j\}_{j=1}^N\subset V_{\delta,K_1}$ and $p\in [1,\infty)$. Then
\begin{align*}
K_1\|\sum_{j=1}^N \varepsilon_j(A-\lambda_j)^{-1}x_j\|_{L_p(\Omega;X)} 
&\leq 2\|\sum_{j=1}^N \varepsilon_j\lambda_j (A-\lambda_j)^{-1}x_j\|_{L_p(\Omega;X)}\\
&\leq 2C_1 \|\sum_{j=1}^N \varepsilon_jx_j\|_{L_p(\Omega;X)}
\end{align*}
by the contraction principle of Kahane (cf.\ e.g. Lemma~3.5 in \cite{DHP03}) since $|\lambda_j|\geq K_1$. This yields the first statement.

$2^\circ$. Since $A(A-\lambda )^{-1}=I-\lambda (A-\lambda
)^{-1}$, the $\mathcal R_{\mathcal L(X)}$-boundedness of the family  \linebreak$\{A(A-\lambda )^{-1}\mid
\lambda \in V_{\delta ,K}\}$ follows from that of   $\{\lambda (A-\lambda )^{-1}\mid
\lambda \in V_{\delta ,K}\}$; it moreover holds  for $(A-\lambda _0)(A-\lambda
)^{-1}$ for any $\lambda _0\in \C$. We here use  the sum rule
Proposition~\ref{prop:4.3} $1^\circ$.

Take $\lambda _0$ in the resolvent set of
$A$; then $A-\lambda _0$ is a homeomorphism of $D(A)$ onto $X$, so for
$(A-\lambda )^{-1}$ viewed as the composition of $(A-\lambda
_0)^{-1}\colon X\to D(A)$ and $(A-\lambda _0)(A-\lambda )^{-1}\colon
X\to X$, we get \eqref{eq:4.10} by the product rule  Proposition~\ref{prop:4.3} $2^\circ$.  

Moreover, we can write 
$$
S(A-\lambda )^{-1}=S(A-\lambda _0)^{-1}(A-\lambda _0)(A-\lambda
)^{-1}.
$$  Since
$S(A-\lambda _0)^{-1}\in \mathcal L(X)$, the last statement in $2^\circ$ follows from the
product rule.

$3^\circ$. Because of \eqref{eq:4.8}, we have by Lemma~\ref{lem:4.6},
\begin{align*}
&\mathcal R_{\mathcal L(X,Y)}\{ (A-\lambda )^{-1}\mid
\lambda \in V_{\delta ,K_1}\}\\
&\le C_0\mathcal R_{\mathcal L(X,X)}\{ (A-\lambda )^{-1}\mid
\lambda \in V_{\delta ,K_1}\}^{\theta }
\mathcal R_{\mathcal L(X,D(A))}\{ (A-\lambda )^{-1}\mid
\lambda \in V_{\delta ,K_1}\}^{1-\theta }\\
&\le C_0(C_1/K_1)^{\theta }C_2^{1-\theta }=C_3K_1^{-\theta },
\end{align*}
where we used \eqref{eq:4.9} and \eqref{eq:4.10}. If $S\in \mathcal L(Y,X)$ with norm $C_4$, we then find
by the product rule 
$$
\mathcal R_{\mathcal L(X)}\{ S(A-\lambda )^{-1}\mid
\lambda \in V_{\delta ,K_1}\}\le C_4C_3K_1^{-\theta }.
$$
\end{proof}

\begin{rem}\label{rem:4.8}
These general results will in the following be applied to the
situation where $A$ is the realization in $X=L_q(\Omega )$ of a pseudodifferential
operator $P$ satisfying Hypothesis~\ref{hypo:3.1}, with domain
$D(A)=D_q(\comega)$ \eqref{eq:3.6}, $\Omega $ being open, bounded and
$C^{1+\tau }$. The perturbation $S$ will often be taken as an operator
of order $s<a+1/q$, $s\ge 0$,
satisfying $\|Su\|_{L_q(\Omega )}\le c\|u\|_{\dot H^s_q(\comega )}$. Recall that
$H_q^{a(t)}(\comega)=\dot H_q^t(\comega)$ when $t<a+1/q$. Since
$s<a+1/q$, there is a $t$ with $s<t<a+1/q $, and there is an
interpolation inequality
\begin{equation}
  \label{eq:4.11}
\|u\|_{\dot H_q^{s}(\comega)}\le c\|u\|^\theta _{L_q(\Omega )}\|u\|^{1-\theta }_{\dot H_q^{t}(\comega)}\quad \text{for all }u\in \dot H_q^{t}(\comega)
\end{equation}
with a  $\theta \in (0,1)$ (more precisely, $\theta =1-s/t$, cf.\
Triebel \cite[1.3.3/5, 2.4.2]{T78}). Here $\|u\|_{\dot H_q^{t}(\comega)}=\|u\|_{
H_q^{a(t)}(\comega)}\le c \|u\|_{
H_q^{a(2a)}(\comega)}=c\|u\|_{D_q(\comega)}$. Thus 
\begin{equation}
  \label{eq:4.12}
\|u\|_{\dot H_q^{s}(\comega)}\le c'\|u\|^\theta _{L_q(\Omega
)}\|u\|^{1-\theta }_{D_q(\comega
)}\quad \text{for all }u\in D_q(\comega
) . 
\end{equation}
This also implies that for any $\varepsilon >0$ there is a constant
$C_\varepsilon $ such that
\begin{equation}
  \label{eq:4.13}
\|u\|_{\dot H_q^{s}(\comega)}\le \varepsilon \|u\|_{D_q(\comega)}+
C_\varepsilon \|u\| _{L_q(\Omega )}\quad \text{for all }u\in D_q(\comega
),
\end{equation}
showing that $S$ satisfies \eqref{eq:4.7} with arbitrarily small $\alpha>0 $.

If $s<a$, we can take $t=a$ in the interpolation.
\end{rem}

\section{Resolvent $\mathcal R$-bounds for the Dirichlet problem}\label{sec:5}

In the following we shall show how resolvent $\mathcal R$-bounds can be
obtained for a general class of $x$-dependent operators $P$ from the
knowledge in some special cases.

First consider pseudodifferential operators on $\rn$ without boundary
conditions. They can
be handled in a way based directly on symbolic calculus, as in
\cite{G95} and \cite{G18a} (when $\tau =\infty $).

\begin{prop}\label{prop:5.1}
   Let $P=\Op(p)$ with $p\in S^d(\rn\times\rn)$, homogeneous of order $d>0$ in $\xi $ satisfying
$\operatorname{Re}p(x,\xi )\ge c|\xi |^d$ for all $|\xi |\ge 1$, $x\in\rn$. Then for every $1<q<\infty $ and
a suitable constant $b$, the heat problem
\eqref{eq:4.1} for $A=P+b$ with $D(P+b)=H^d_q(\rn)$ and $X=L_q(\rn)$  has maximal $L_q$-regularity
on $\rp$, and hence  $P+b$ is $\mathcal R$-sectorial on $V_{\delta ,0}$
for some $\delta >0$.  
\end{prop}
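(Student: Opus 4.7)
The statement is a closed-symbol result on all of $\rn$ with no boundary to handle, so the proof proceeds entirely within parameter-dependent pseudodifferential calculus. My first step is to verify that, after adding a sufficiently large constant $b$, the symbol $p(x,\xi)+b-\lambda$ is \emph{parameter-elliptic} on $V_{\delta,0}$ for some $\delta>0$. The homogeneity of $p$ of degree $d$ together with $\operatorname{Re}p(x,\xi)\ge c|\xi|^d$ for $|\xi|\ge 1$ and compactness of the unit sphere yield a $\delta_0>0$ such that $p(x,\xi)$ lies in a closed sector $\{|\arg z|\le \pi/2-\delta_0\}$ for $x\in\rn$, $|\xi|\ge 1$. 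Picking $\delta<\delta_0$ and taking $b$ larger than $\sup_{x\in\rn,\,|\xi|\le 1}|p(x,\xi)|$ pushes the range of $p(x,\xi)+b$ into the right half-plane and off $V_{\delta,0}$, yielding the parameter-ellipticity estimate
\[
|p(x,\xi)+b-\lambda|\ge C\bigl(\ang{\xi}^d+|\lambda|\bigr)\quad\text{for all } x,\xi\in\rn,\ \lambda\in V_{\delta,0}.
\]

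Next, using the parameter-dependent $\psi$do calculus of \cite{G95,G96} with $\lambda$ as covariable, I would build a parametrix $Q_\lambda=\Op(q(x,\xi,\lambda))$ with asymptotic expansion $q\sim\sum_{j\ge 0}q_{-j}$, principal term $q_0=(p(x,\xi)+b-\lambda)^{-1}$, and parameter-dependent symbol bounds
\[
|\partial^\alpha_\xi\partial^\beta_x q_{-j}(x,\xi,\lambda)|\le C_{\alpha\beta j}\bigl(\ang{\xi}^d+|\lambda|\bigr)^{-1-j/d}\ang{\xi}^{-|\alpha|}.
\]
Standard composition in the calculus gives $(P+b-\lambda)Q_\lambda=I-R_\lambda$ with $R_\lambda$ of arbitrarily negative parameter-dependent order, so $\|R_\lambda\|_{\mathcal L(L_q(\rn))}\to 0$ as $|\lambda|\to\infty$. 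A Neumann series then produces the true resolvent $(P+b-\lambda)^{-1}=Q_\lambda(I-R_\lambda)^{-1}$ for $|\lambda|\ge K_0$, and the remaining bounded piece of $V_{\delta,0}$ with $|\lambda|<K_0$ is swept into the resolvent set by enlarging $b$ once more.

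The decisive last step is to promote uniform symbol bounds to $\mathcal R$-boundedness of $\{\lambda(P+b-\lambda)^{-1}:\lambda\in V_{\delta,0}\}$ on $L_q(\rn)$. The parameter-dependent estimates imply that each $\lambda q_{-j}(x,\xi,\lambda)$ is a uniform Mikhlin multiplier in $\xi$, with all necessary $x$-derivatives also controlled. For $x$-independent symbols this gives $\mathcal R$-boundedness directly from the operator-valued Mikhlin theorem of Kalton-Weis, as in \cite[Sect.~4]{DHP03}; the $x$-dependent case is reduced to that one by freezing coefficients via a locally finite partition of unity and absorbing the lower-order errors into the remainder, or by appealing to an $\mathcal R$-bounded version of the $L_q$-continuity theorem for $\psi$do's (cf.\ \cite{G18a}). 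Combining this with the product rule (Proposition \ref{prop:4.3}\,$2^\circ$) and the fact that $(I-R_\lambda)^{-1}$ is a uniformly bounded family close to the identity yields $\mathcal R$-sectoriality of $P+b$ on $V_{\delta,0}$; since $L_q(\rn)$ is UMD, maximal $L_q$-regularity on $\rp$ then follows from Theorem \ref{thm:4.4}.

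I expect the principal obstacle to be not parameter-ellipticity or the parametrix construction (both routine in this smooth setting) but the transition from classical symbol estimates to genuine $\mathcal R$-bounds, which cannot be done with Mikhlin's scalar theorem alone and requires either the Kalton-Weis operator-valued multiplier theorem or a direct kernel-based estimate. Once that machinery is in place, the remaining assembly is a straightforward composition of $\mathcal R$-bounded and uniformly bounded families.
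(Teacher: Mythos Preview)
Your approach is correct but runs in the opposite direction from the paper's. The paper does not build a parameter-dependent resolvent parametrix and then upgrade to $\mathcal R$-bounds; instead it invokes the anisotropic $\psi$do calculus of \cite{G95,G18a} directly for the heat operator $P+b+\partial_t$ on $\rn\times\R$. Those references establish that $P+b+\partial_t$ and its parametrix act continuously between the anisotropic spaces $H_q^{(s,s/d)}$, and that the symbols are holomorphic in $\operatorname{Im}\tau<0$ so support in $\{t\ge 0\}$ is preserved; together with the G\aa rding inequality for invertibility in $L_2$, this yields the equivalence $f\in L_q(\rn\times\rp)\iff u\in L_q(\rp;H^d_q(\rn))\cap \dot H^1_q(\crp;L_q(\rn))$, i.e.\ maximal $L_q$-regularity on $\rp$ \emph{directly}. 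The $\mathcal R$-sectoriality is then read off from the ``only if'' half of Theorem~\ref{thm:4.4}, not proved first. Your route---parameter-ellipticity, resolvent parametrix, then Kalton--Weis/operator-valued Mikhlin to get $\mathcal R$-bounds, then the ``if'' half of Theorem~\ref{thm:4.4}---is also valid and more self-contained in spirit, but it front-loads exactly the step you flag as hardest (symbol bounds $\to$ $\mathcal R$-bounds for $x$-dependent $\psi$do's on a noncompact domain), whereas the paper bypasses this entirely by working in $(x,t)$-space and never touching $\mathcal R$-boundedness until it falls out as a corollary. Note also that your citation of \cite{G18a} for an ``$\mathcal R$-bounded $L_q$-continuity theorem'' is off: that paper works in anisotropic spaces and does not prove $\mathcal R$-bounds as such; for your route you would need something like Portal--\v{S}trkalj or Denk--Krainer instead.
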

\begin{proof} For integer $d$, this follows from Theorem~3.1 (1) in
\cite{G95}, where mapping properties in anisotropic Bessel-potential
spaces $H_q^{(s,s/d)}(\rn\times \R)$ were established; they hold for
$P+\partial_t$ as well as its parametrix, as formulated for the case with
boundary in Theorem~3.4 there. The property of being supported for $t\ge 0$ is
preserved by these mappings, since the symbols are holomorphic in $\tau $ for
$\operatorname{Im}\tau <0$. Here $b$ can be chosen so that $P+b$ has
positive lower bound in $L_2$ (by the G\aa{}rding inequality); then
there is a solution operator, which also works in the $L_q$-setting, and 
\begin{equation}
  \label{eq:5.1}
f\in L_q(\rn \times \rp)\iff u\in \dot H_q^{(d,1)}(\rn\times
\crp)=L_q(\rp;H^d_q(\rn))\cap \dot H^1_q(\crp;L_q(\rn)), 
\end{equation}
where $\dot H^1_q(\crp;L_q(\rn))=\{f\in H^1_q(\R_+;L_q(\rn))\mid f|_{t=0}=0\}$.
Noninteger $d$ are included in the detailed presentation of symbol
classes in \cite{G18a}. In that paper, the
emphasis is on the regularity conclusion $\implies$ in \eqref{eq:5.1}; the
existence is shown as in \cite{G95}. 
\end{proof}

Next, there is a special result for operators on a bounded domain.

\begin{prop}\label{prop:5.2}
Let $p^h\colon \rn\to \C$ be  smooth in $\rn\setminus \{0\}$,
strictly homogeneous of degree $2a>0$, even, strongly elliptic and real (so
$p^h(\xi )\ge c|\xi |^{2a}$ for all $\xi\in\Rn$ with $c>0$). Let $\ol p\colon \rn\to \C$ be smooth, coinciding with $p^h(\xi )$ for
$|\xi |\ge 1$ and positive for all $\xi\in\rn $. Let $P^h=\Op(p^h)$, $\ol P=\Op(\ol p)$.
Let $\Omega $ be a bounded
$C^{1+\tau }$-domain, $\tau >2a$.

Then there are  $\delta >0$ and
$K\ge 0$
such
that the $L_q$-Dirichlet realization $P^h_D$  of
$P^h$  on $\Omega $ is
 $\mathcal R$-sectorial on  $V_{\delta ,0}$, and the  $L_q$-Dirichlet
 realization $\ol P_D$  of  $\ol P$ on $\Omega $ is  $\mathcal
 R$-sectorial on $V_{\delta ,K}$.
\end{prop}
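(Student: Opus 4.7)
The argument splits into two parts. For $P^h_D$ on $V_{\delta,0}$ we exploit the symmetric/Markovian structure of $P^h$ and invoke earlier work of Grubb; for $\bar P_D$ on $V_{\delta,K}$ we view $\bar P_D$ as a lower-order perturbation of $P^h_D$ and apply Proposition \ref{prop:4.5}.

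\emph{Step 1 ($P^h_D$).} Since $p^h$ is real, even, homogeneous, and strongly elliptic, the sesquilinear form $s^h$ of \eqref{eq:3.4} coincides with the nonnegative symmetric form $Q$ of \eqref{eq:3.8}, so $P^h_D$ is a nonnegative selfadjoint operator on $L_2(\Omega)$ falling into the symmetric translation-invariant (Markovian) framework for which \cite{G23} establishes maximal $L_p$-regularity in every $L_q$, $q\in(1,\infty)$. Strong ellipticity on the bounded domain $\Omega$ gives a strictly positive lower bound on $\sigma(P^h_D)$, so $0\in\rho(P^h_D)$, and the maximal $L_p$-regularity extends from finite intervals to $\rp$. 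Theorem \ref{thm:4.4} then converts this into $\mathcal R$-sectoriality of $P^h_D$ on $V_{\delta,0}$ for some $\delta\in(0,\pi/2)$.

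\emph{Step 2 ($\bar P_D$).} Set $r=\bar p-p^h$; by assumption $r$ is bounded, vanishes for $|\xi|\ge 1$, and satisfies $|\partial_\xi^\alpha r(\xi)|\le C_\alpha |\xi|^{-|\alpha|}$ near $\xi=0$ (because $p^h$ is homogeneous of positive degree $2a$, so $|\partial_\xi^\alpha p^h(\xi)|\lesssim |\xi|^{2a-|\alpha|}\le |\xi|^{-|\alpha|}$ in $\{|\xi|\le 1\}$). Hence $R=\Op(r)$ is an $L_q$-Fourier multiplier by Mikhlin, bounded on $L_q(\rn)$; and since $r$ has compact $\xi$-support, $\mathcal F^{-1}r$ is entire by Paley--Wiener, so $R$ has a smooth convolution kernel. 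The truncation $\tilde R=r^+Re^+$ is therefore bounded on $L_q(\Omega)$ and smoothing, so $\bar P_D=P^h_D+\tilde R$ on the common domain $D_q(\comega)$ (the $a$-transmission space is determined by the principal part, which is shared). The trivial estimate $\|\tilde Ru\|_{L_q(\Omega)}\le C\|u\|_{L_q(\Omega)}$ gives \eqref{eq:4.7} with $\alpha=0$, which is certainly ``sufficiently small''. Proposition \ref{prop:4.5}$(2)$ applied to $A=P^h_D$ and $S=\tilde R$ then produces $K\ge 0$ with $V_{\delta,K}\subset\rho(\bar P_D)$ and $\mathcal R$-sectoriality of $\bar P_D$ on $V_{\delta,K}$.

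The main obstacle lies in Step 1: one must verify that the hypotheses on $p^h$ (real, even, positive, homogeneous) really do place $P^h$ within the symmetric Markovian framework of \cite{G23}, with the inverse Fourier transform $K=\mathcal F^{-1}p^h$ furnishing the nonnegative even kernel in \eqref{eq:3.2}--\eqref{eq:3.8}, and then pass from maximal $L_p$-regularity on finite intervals to $\rp$ using the spectral gap at $\lambda=0$. Once this is in hand, Step 2 is a routine smoothing perturbation.
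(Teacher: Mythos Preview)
Your proposal is correct and follows essentially the same route as the paper: Step~1 invokes the Markovian/Dirichlet-form structure of $P^h$ (the paper unpacks this via \cite{FOT94}, Lamberton \cite{L87}, the Poincar\'e inequality from \cite{R16}, and \cite[Cor.~3.5.3]{PS16}, whereas you cite \cite{G23} directly) to obtain maximal $L_p$-regularity on $\R_+$ and hence $\mathcal R$-sectoriality via Theorem~\ref{thm:4.4}; Step~2 treats $\bar P-P^h$ as a bounded $L_q$-perturbation and applies Proposition~\ref{prop:4.5}\,$2^\circ$ with $\alpha=0$, exactly as the paper does. Your Mikhlin verification in Step~2 is a useful elaboration of what the paper states tersely as ``smoothing''; the Paley--Wiener remark is correct but superfluous, since $L_q$-boundedness (not smoothness of the kernel) is what feeds into Proposition~\ref{prop:4.5}.
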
 
\begin{proof}
The operator $P^h$ is of the kind $\mathcal L$ considered around \eqref{eq:3.2},
its $L_2(\Omega )$-Dirichlet realization being associated with the quadratic
form $Q$ recalled in \eqref{eq:3.8}. It is accounted for in \cite{G18a} around
(5.10) how the form $Q(u)$ is a so-called Dirichlet
form in the sense of Fukushima, Oshima and Takeda \cite{FOT94} (also
considered in Davies \cite{D89}).
It has
a Markovian property, which assures that $-P^h_D$ generates a strongly
continuous contraction semigroup $T_q(t)$ not only in $L_2(\Omega )$ but also
in $L_q(\Omega )$ for $1<q<\infty $, and $T_q(t)$ is bounded
holomorphic (and the operators for varying $q$ are consistent).
By Lamberton \cite{L87}, these properties imply that the heat problem
\eqref{eq:4.1} with $A=P^h_D$  has maximal $L_q$-regularity, for $1<q<\infty
$ and all finite intervals $I$.

It is also
shown in \cite{L87} that the constant $C$ in the estimates over $\Omega \times
I$, $I=(0,T)$,
\begin{equation}
  \label{eq:5.2}
\|Au\|_{L_q(\Omega \times I)}+\|\partial_tu\|_{L_q(\Omega \times I)}\le C\|f\|_{L_q(\Omega \times I)},
\end{equation}
is independent of $T$. This allows us to conclude that \eqref{eq:5.2} also holds
with $I=\rp$. \cite{L87} applies to very general, also unbounded,
sets $\Omega $, and what we have said so far, only shows that $A=P^h_D$ has
the weak maximal-regularity property defined in Pr\"uss-Simonett
\cite{PS16} p.\ 142 (is in  $ _0\mathcal M\mathcal R_q(\rp;L_q(\Omega ))$ in their notation).

Now since $\Omega $ is bounded, the quadratic form
$Q(u)$ on $\dot H^a(\comega )$ moreover satisfies a
Poincar\'e{} inequality (as accounted for in Ros-Oton \cite{R16}) so
that 0 is in the resolvent set of $P^h_D$. Then by Cor.\ 3.5.3 in \cite{PS16}, $P^h_D$
has the full maximal-regularity property (is in $\mathcal M\mathcal
R_q(\rp;L_q(\Omega ))$ in their notation). It means that
$\|u\|_{L_q(I,D(A))}$ can be added to the left-hand side in \eqref{eq:5.2}
with $I=\rp$.

We then conclude from Theorem~\ref{thm:4.4} that $P^h_D$ is $\mathcal R$-sectorial
on $V_{\delta ,0}$ for some $\delta >0$.

Since $\ol P-P^h$ has bounded symbol supported in $|\xi |\le 1$, it defines
a smoothing operator over $\Omega $. Then by Proposition~\ref{prop:4.5} $2^\circ$,  $\ol P_D$ is $\mathcal
R$-sectorial on $V_{\delta ,K}$ for some $K\ge 0$. 
\end{proof}

\begin{rem}\label{rem:5.3}
    It will also be used that $\mathcal
R$-sectoriality is preserved under suitable
coordinate transformations (such as those used in \cite{AG23}). This
holds, since composition with a single operator preserves $\mathcal
R$-boundedness (by Proposition~\ref{prop:4.3} $2^\circ$).
\end{rem}

 Denote the ball  $\{|x-x_0|<r\}$ in $\rn $ by $B_r(x_0)$; if $x_0=0$,
 we just write $B_r$. The closure is denoted $\ol B_r(x_0)$. The balls in $\R^{n-1}$ will be denoted
 $B'_r(x'_0)$, or just $B'_r$ if $x'_0=0$.
By $\chi _{r,s}$ ($r>s>0$) we denote a function in $C_0^\infty (\rn, [0,1])$ such that
$\supp \chi _{r,s}\subset B_r$ and $\chi _{r,s}(x)=1$ for $x\in
B_s$. Denote in particular
\begin{equation}
  \label{eq:5.3}
\chi _{2,1}=\eta ,\quad \chi _{1,\frac12}=\psi .
\end{equation}

 {The next result is the first crucial step in the regularity
estimates for bounded domains, taking place in a highly localized setting. The proof is
modeled after Theorem 6.6 in \cite{AG23}, but has the additional
features that $\mathcal R$-boundedness is taken into account, and the
comparisons over curved halfspaces in \cite{AG23} must here be
replaced by comparisons over truncated curved halfspaces, since the
point of departure is a result for bounded domains.}

We shall show:
{\begin{thm}\label{thm:5.4}
  Let $\Omega $ be bounded with  $C^{1+\tau
}$-boundary, $\tau >2a$, and let $1<q<\infty $. Let
$P=\operatorname{OP}(p)$ satisfy Hypothesis~\ref{hypo:3.1}. Assume moreover
that for $x\in\partial\Omega $, $p_0(x,\xi )$ is {\bf real} $>0$.
Let $P_0=\operatorname{OP}(p_0)$.

Consider a point $x_0\in \partial\Omega $, and denote $p_0(x_0,\xi )=\ol p(\xi )$ for all $\xi\in \rn$,  $\operatorname{OP}(p_0(x_0,\cdot ))=\ol P$.
Translate $x_0$ to $0$, and let $U$ be a neighborhood of
$0$ where, after a rotation,  $U\cap \Omega $
has the form $U \cap \rn_{\zeta _1} $ for a function $\zeta _1\in C^{1+\tau
}(\R^{n-1},\R)$ with $\zeta _1 (0)=0$, $\nabla\zeta _1(0)=0$. By a
dilation we can assume that $U$ contains $B'_2\times [-M,M]$, where $M=\max _{|x'|\le 2}\{ |\zeta _1(x')|,2\}$.

Then there exists a $z\in (0,1]$ such that the following holds: There is a bounded
$C^{1+\tau }$-domain $\Sigma _1$ with $B_z\cap \Omega =B_z\cap
\Sigma _1$, and an operator $P_1$ satisfying Hypothesis~\ref{hypo:3.1} such that for $u\in D_q(\comega)$ supported in $B_{z/4}$,
\begin{equation}
  \label{eq:5.4}
\chi _{ {z,2z}} (P_0-\lambda )u=\chi _{ {(1+\varepsilon
)z,z}}(P_1-\lambda )u \text{ on }\rn
\end{equation}
(some $\varepsilon >0$), where $P_{1,D}\colon D_q(\ol\Sigma _1)\to L_q(\ol\Sigma _1)$ is $\mathcal R$-sectorial on $ V_{\delta ,K}$ for some
$K\ge 0$. Consequently, for any $\varphi  \in C_0^\infty (B_{z/2})$
\begin{equation}
  \label{eq:5.5}
\varphi  (P_0-\lambda )u=\varphi (P_1-\lambda )u\text{ on }\rn. 
\end{equation}
\end{thm}

\begin{proof} 
 Departing from Proposition~\ref{prop:5.2}, we will show the formula by use of a scaling argument, making it possible
to find a small set where  $P_0-\ol P$ and
$\zeta _1(x')$ have so small values that the resolvent estimates for
$\ol P$ can
be carried over to $P_0$.  {To perform the scaling argument more easily we translated $x_0$ to $0$.}

\medskip

\noindent
 {\emph{Step 1 (Small perturbations of constant coefficients and flat domains):} We} introduce an auxiliary domain:
Along with $\zeta _1\in C^{1+\tau }(\R^{n-1},\R)$, consider $\zeta
(x')=\chi _{2,1}(x')\zeta _1(x')$ for all $x'\in\R^{n-1}$, coinciding with $\zeta _1 $ when
$|x'|\le 1$ but vanishing for $|x '|\ge 2$. We now choose a $C^{1+\tau }$ set
$\Sigma $ such that for $|x'|\ge 2$, it is a subset of the slab
$\{x\in \rn\mid 0<x_n<2M\}$ containing the cylindrical set $\{x\mid 2\le
|x'|\le 5,\; 0<x_n< 2M\}$, and for $|x '|\le 2$ it is the set $V=\{x\in \rn\mid
|x'|\le 2,\; \zeta (x')<x_n<2M\}$.

The diffeomorphism $F_{\zeta }\colon (x',x_n)\mapsto (x',x_n-\zeta
(x'))$ sends $\rn_{\zeta }$ bijectively to $\rnp$; it acts as  the identity on points
outside the cylinder $B'_2\times \R$, and maps $V$ to a set $V'\subset B'_2\times [0,2M]$, which
has the boundary piece $B'_2\times \{0\}$ in common with $\rnp
$. Denote $F_\zeta (\Sigma )=\Sigma ' $.
Recall from \cite[Section~6]{AG23} that under the diffeomorphism
$F_\zeta $ on $\rn$, a suitable
operator $P$, acting on functions defined on $\Sigma$ or $\R^n$, is carried over to the operator $P_\zeta =F^{-1,\ast}_\zeta
PF_\zeta^\ast $. 

Now a slight variant of \cite[Proposition~6.5]{AG23} is needed:
\begin{lem}\label{lem:5.5}
   Let  $\ol{p}\in S^{2a} (\rn\times
\rn)$, $p\in C^\tau S ^{2a}(\rn\times
\rn)$, and $1<q<\infty$.
For any $\varepsilon >0$ 
there exist $k\in\N$ and $\varepsilon ' =\varepsilon '(\ol{p},q)>0$ such that if 
\begin{equation}
  \label{eq:5.6}
  |\ol{p}-p|_{k,C^\tau S^{2a} _{1,0}(\rn\times \rn)}\leq \varepsilon '\quad
  \text{and} \quad \|\zeta\|_{C^{1+\tau    }(\R^{n-1})}\leq
  \varepsilon '.    
\end{equation}
then $\|\ol P- P_\zeta \|_{\mathcal
L(D _q(\ol \Sigma ' ),L_q(\Sigma '  ))}\le \varepsilon $.
\end{lem}
 {The proof of this lemma is given below.} 

We continue the proof of  {Theorem}~\ref{thm:5.4}, with $P, \ol{P}, P_0, p,
\ol{p}$ defined there. Here we have that Proposition~\ref{prop:5.2} applies to $\ol P$
considered over $\Sigma ' $. Then Proposition~\ref{prop:4.5} can be applied to
 $P_{0,\zeta }$ as a perturbation of $\ol P$, when the norm difference is small enough,
and by Lemma~\ref{lem:5.5},
this can be obtained when the symbol
estimates of $\ol p-p_0$ and $C^{1+\tau }$-estimate of $\zeta $ in
\eqref{eq:5.6} are small enough. Thus we get for such $p_0$ and $\zeta $ close to $\overline p$ and $0$, resp.:
\begin{equation}
  \label{eq:5.7}
\|(P_{0,\zeta }-\lambda )v\|_{L_q(\Sigma ' ) }\ge c_0|\lambda |\|v\|_{L_q (\Sigma ' )}\quad \text{for all }\lambda
\in V_{\delta ,K_0},\; v\in D _q(\ol\Sigma ' ), 
\end{equation}
and moreover, the family $\lambda (P_{0,\zeta ,D}-\lambda )^{-1}$ is $\mathcal R$-bounded for
$\lambda \in V_{\delta ,K_0}$ (here $P_{0,\zeta ,D}$ stands for the
Dirichlet realization of $P_{0,\zeta }$ on $\Sigma '$).

For such $p_0$ and $\zeta $,  a
similar estimate can be concluded for $P_0$ over $\Sigma $, by changing variables back to
$\Sigma $: 
\begin{equation}
  \label{eq:5.8}
\|(P_{0 }-\lambda )v\|_{L_q(\Sigma  ) }\ge c_0|\lambda |\|v\|_{L_q (\Sigma  )}\quad \text{for all }\lambda
\in V_{\delta ,K_0},\; v\in D _q(\ol\Sigma  ); 
\end{equation}
also $\mathcal R$-boundedness is preserved here, cf.\ Remark~\ref{rem:5.3}.

\medskip

\noindent
 {\emph{Step 2 (Local scaling):}}  {We will now use a scaling argument to reduce the statement for a general operator $P_0$ to the case considered in the first step, i.e., an operator with a symbol close to a constant coefficient operator $\ol P$}, when applied to functions supported in a
sufficiently small ball around $0$.

Recalling that $\eta =\chi _{2,1}$, we define for $z>0$:
\begin{equation}
  \label{eq:5.9}
  \begin{split}
  \zeta_z(x')&= z^{-1}\eta ((x',0)) \zeta(zx'), \\
  p_z(x,\xi) &= \eta (x) p_0(zx,\xi)+ (1-\eta(x)) p_0(0,\xi),
   \end{split}
\end{equation}
for all $x,\xi\in\R^n$, $x'=(x_1,\ldots,x_{n-1})$.
Define moreover
\begin{equation}
  \label{eq:5.10}
  q_z(x,\xi)= p_0(zx,z^{-1}\xi)- z^{-2a}p_0(zx,\xi).  
\end{equation}
Because of the homogeneity of $p_0$,  $p_0(zx,z^{-1}\xi)=z^{-2a} p_0(zx,\xi)$ for all $|\xi|\geq 1$ and $z\in (0,1]$ and therefore
$q_z(x,\xi)=0$ for all $|\xi|\geq 1$, $z\in (0,1]$. Hence $q_z\in
C^\tau S^{-\infty}_{1,0}(\rn\times \rn)$.

For $v\colon \rn\to \C$ and $z>0$ we shall write $\sigma_z v\colon
\rn\to\C$ for the function $
  (\sigma_z v)(x)= v(zx)$.
We have with $P_0=\Op (p_0)$  
for all $x\in\R^n$ and all suitable $v\colon \rn\to\C$:
\begin{equation}
  \label{eq:5.11}
  \sigma_z(P_0v)(x)= \int_{\R^n} e^{izx\cdot \xi} p_0(zx,\xi) \hat{v}(\xi)\dd \xi
          = \int_{\R^n} e^{ix\cdot \xi} p_0(zx,z^{-1}\xi)
          \widehat{\sigma_z(v)}(\xi)\,\dd \xi,  
\end{equation}
Then, by use of $q_z$
\begin{equation}
  \label{eq:5.12}
 \sigma_z(P_0v)(x)= z^{-2a}(\Op (p_z) \sigma_z(v))(x) +
 (\Op(q_z)\sigma_z(v))(x)\quad\text{for all }|x|\le 3.    
\end{equation}

Denote $\operatorname{OP}(p_z)=P_z$, $\operatorname{OP}(q_z)=Q_z$, so
that \eqref{eq:5.12} reads
\begin{equation}
  \label{eq:5.13}
 \sigma_z(P_0v)(x)= z^{-2a}(P_z \sigma_z(v))(x) +
 (Q_z\sigma_z(v))(x)\;\quad \text{for all }|x|\le 3. 
\end{equation}

Recall from \cite[Lemma 6.7]{AG23}
the technical lemma that serves to control remainder terms:
\begin{lem}\label{lem:5.6}
For any $k\in\N$ there is some $C>0$ such that for all $z\in (0,1]$
\begin{equation}
  \label{eq:5.16}
  \|\zeta_z\|_{C^{1+\tau}(\R^{n-1})}\leq Cz^{\min (1,\tau) },\qquad
  |p_z-\ol{p}|_{k,C^{\tau}S^{2a}_{1,0}(\rn\times \rn)}\leq Cz^{\min
    (1,\tau) }. 
\end{equation}
\end{lem}

 Define  $\Sigma _z$
to be like $\Sigma $ for $|x'|\ge 2$, and for $|x'|\le 2$ to be of the form
$\{x\mid |x'|\le 2, \zeta _z(x')<x_n<2M\}$. Using Lemma~\ref{lem:5.6}, we can apply the same argumentation as around \eqref{eq:5.7}
to the difference $\ol P-P_{z,\zeta _z}$ to show  that  for a
sufficiently small $z\in (0,1]$,
$P_{z,\zeta _z}$ has an estimate
$$
\|(P_{z,\zeta _z}-\lambda )v\|_{L_q(\Sigma '_z)}\ge c'_1|\lambda |\|v\|_{L_q (\Sigma '_z)}\quad \text{for all }\lambda
\in V_{\delta ,K_1},\;v\in D_q(\ol\Sigma _z).
$$
This is carried back by diffeomorphism to show that
$P_z$ has an estimate
\begin{equation}
  \label{eq:5.17}
\|(P_z-\lambda )v\|_{L_q(\Sigma _z)}\ge c_1|\lambda |\|v\|_{L_q (\Sigma _z)}\quad \text{for all }\lambda
\in V_{\delta ,K_1},\;v\in D_q(\ol\Sigma _z). 
\end{equation}
Here the family $\lambda (P_{z,\zeta _z,D}-\lambda )^{-1}$ is $\mathcal
R$-bounded for $\lambda \in V_{\delta ,K_1}$, and then so is the family $\lambda (P_{z,D}-\lambda )^{-1}$.

We \emph{fix} such a $z$ in the following!

Note that when $v$ is supported in $B_2$, then $\|v\|_{D_q(\ol \Sigma
_{z})}$ identifies with $\|v\|_{D_q(\ol {\R}^n
_{\zeta _z})}$.
For functions $u$ supported in
$B_1$,  $\|u\|_{D_q(\comega)}$ can be replaced by
$\|u\|_{D_q(\ol{\Sigma} )}$, since  $D_q(\comega)$  is defined here by
the localization using $\zeta _1(x')$, which equals $\zeta (x')$ for
$|x'|\le 1$ (cf.\ the definition of the transmission space by local coordinates).

Now we consider a function $u\in D_q(\ol {\Sigma})$ with
support in $B_{z/4}$. By the definition of $\zeta _z$, the
function $\sigma _zu$ is in  $D_q(\ol {\Sigma}_z)$, supported in
$B_{1/4}$. We insert $u$ in \eqref{eq:5.13}, replace $P_0$ by
$P_0-\lambda $ by subtracting $\lambda  \sigma _zu$ from both sides, and multiply the resulting equation by
 $\psi =\chi _{1,1/2}$, so that the validity extends to $x\in \rn$.
We shall moreover
multiply the equation by $\psi '=\chi _{(1+\varepsilon ),1}$ for a
small $\varepsilon >0$; it satisfies $\psi '\psi =\psi $. This gives, since $\psi \sigma _zu=\sigma _zu$,
\begin{align*}
\psi(x) &\sigma_z((P_0-\lambda )u)(x)
 = z^{-2a}\psi(x) (P_z \sigma_z(u))(x) -\lambda \psi(x)  \sigma _zu(x)+
 \psi(x) (Q_z\sigma_z(u))(x)\\
 & = \psi '(x)[z^{-2a}\psi(x) (P_z \sigma_z(u))(x) -\lambda  \sigma _zu(x)+
 \psi(x) (Q_z\sigma_z(u))(x)]\quad\text{for all }x\in \rn.
\end{align*}
Here we can moreover use that 
$$
\psi P_z\sigma _zu=P_z(\psi \sigma _zu)+[\psi ,P_z]\sigma
_zu=P_z(\sigma _zu)+[\psi ,P_z]\sigma _zu,
$$
so that we get
\begin{equation}
  \label{eq:5.18}
 \psi \sigma_z((P_0
-\lambda )u)
 = \psi '[z^{-2a} P_z \sigma_zu -\lambda  \sigma _zu+
 \psi Q_z\sigma_zu+ 
 z^{-2a}[\psi ,P_z]\sigma _zu].
\end{equation}

Denote $S_z= \psi Q_z+ z^{-2a}[\psi ,P_z]$ ---  it is bounded from $
H_q^{\max\{0,2a-1\}}(\rn)$ to $L_q(\rn)$ --- then \eqref{eq:5.18} takes the
form
\begin{equation}
  \label{eq:5.19}
 \psi \sigma_z((P_0
-\lambda )u) = \psi '(z^{-2a} P_z +S_z-\lambda )\sigma_zu\quad \text{on }\rn.
\end{equation}
When $a\le \frac12$, $S_z$ is bounded in $L_q(\rn)$, and when
$a>\frac12$, it satisfies an inequality \eqref{eq:4.13} since $0<2a-1<a$. Then
we can apply Proposition~\ref{prop:4.5} with $A=z^{-2a}P_z$  and $S=S_z$ over
$\Sigma _z$, finding that the Dirichlet problem for  $z^{-2a}P_z+S_z$ over $\Sigma _z$
 has the desired type of estimate for some $K_2$ sufficiently large: 
 \begin{equation}
   \label{eq:5.20}
\|(z^{-2a} P_z + S_z-\lambda ) v\|_{L_q(\Sigma _z)}\ge c_2|\lambda
 |\|v\|_{L_q (\Sigma _z)}\quad \text{for }\lambda
\in V_{\delta ,K_2},
 \end{equation}
for all $v\in D_q(\ol\Sigma _z)$, with $\mathcal R$-boundedness of the family
$\lambda ((z^{-2a} P_z + S_z)_D-\lambda )^{-1}\in \mathcal L(L_q(\Sigma _z))$  for $\lambda
\in V_{\delta ,K_2}$.

\medskip

\noindent
 {\emph{Step 3 (Scaling back):}}
Finally, this will be scaled back to a replacement of $\sigma _zu$
(recall that it is short for $x\mapsto u(zx)$) by $u$. The set $\Sigma _z$ will then be
replaced by a set $\Sigma _1=\{y\in\rn\mid y/z\in \Sigma _z\}$, where
the important observation is that the piece where $|x'|<z$,
$\zeta _z(x')<x_n<2M$, is carried over to the piece where $|y'|<1$, $
\zeta (y')<y_n<2zM$, which coincides with a piece of $\Omega $. The
operator $z^{-2z}P_z+S_z$ is (by a formula as in \eqref{eq:5.11}) carried over to an operator we shall call $P_1$
(by a slight abuse of notation); 
$$
(z^{-2z}P_z+S_z)\sigma _zv= P_1v,
$$
 and $P_{1,D}$ now has the appropriate $\mathcal
R$-sectoriality over $\Sigma _1$. Note also that $\psi =\chi _{1,1/2}$ carries over to
 {$\chi _{z,z/2}=\sigma_z^{-1}(\chi_{1,1/2})$}. Formula \eqref{eq:5.19} then takes
the form 
$$
\chi _{1/z,1/2z}(P_0-\lambda )u=\chi _{(1+\varepsilon
)/z,1/z}(P_1-\lambda )u,\text{ when
}u\in D_q(\comega)\text{ with }\operatorname{supp}u\subset B_{z/4},
$$
showing \eqref{eq:5.4}. Multiplication by $\varphi $ on both sides gives
\eqref{eq:5.5}, ending the proof of  {Theorem}~\ref{thm:5.4}.
\end{proof}

\noindent
 {\begin{proof*}{of Lemma~\ref{lem:5.5}}
Proposition~6.5 in \cite{AG23} shows this with $\Sigma ' $ replaced by
$\rnp$ (the difficult part is the change of variables, prepared there
in Theorem~5.13). We note that in the latter proposition it is assumed that $p$ is strongly elliptic and even. But for the estimate in \eqref{eq:5.6} this is not needed. To obtain the  statement in the lemma, we decompose a function
$u\in D _q(\ol\Sigma ' )$, by use of fixed
smooth cut-off functions, into three terms $u=u_1+u_2+u_3$, with 
$\operatorname{supp}
u_1\subset B'_4\times [0,\frac32 M]$, $\operatorname{supp}
u_2\subset B'_4\times [M,3 M]$, and
$\operatorname{supp}u_3\subset \ol \Sigma ' \setminus ( B'_3\times
[0,2M])$; 
all three belonging to
$D _q(\ol\Sigma ' )$. The term $u_1$ can also be viewed as an
element of $D _q(\crnp)$, and the rule in \cite[Proposition~6.5]{AG23} pertaining to $\rnp $ and $\rn_{\zeta }$ applies. This yields $\|(\ol P- P_\zeta)u_1 \|_{L_q(\rn )}\leq \eps \|u_1\|_{D _q(\ol \Sigma ' )}$ if $\eps'$ is sufficiently small. For the
term $u_2$ there is a similar rule pertaining to the halfspace
$\{x\mid x_n<2M\}$ and
the curved halfspace $\{x\mid x_n+\zeta (x')<2M\}$. For $u_3$ there is a simpler rule since
the variable $x$ is not shifted. The norm
$\|(\ol P- P_\zeta)u_3 \|_{L_q(\rn )}$ will then be dominated by the norms in \eqref{eq:5.6} (times $\|u_3\|_{D _q(\ol \Sigma ' )}$), and
so will, a fortiori, the norm $\|\ol P- P_\zeta \|_{\mathcal
L(D _q(\ol \Sigma ' ),L_q(\Sigma '  ))}$.
This shows the lemma. 
\end{proof*}
}

There is  a  related, slightly easier statement for interior points:
\begin{prop}\label{prop:5.7}
    Let $P$, $P_0$ and $\Omega $ be as in  {Theorem}~\ref{thm:5.4}. Consider an interior point $x_0\in\Omega $.

Then there exists a $z\in (0,1]$ and a
 $P_1$ satisfying Hypothesis~\ref{hypo:3.1} such that the following holds: For $u\in
 D_q(\comega)$ supported in $B_{z/4}(x_0)$, and  $\varphi  \in C_0^\infty (B_{z/2}(x_0))$, we have 
 \begin{equation}
   \label{eq:5.20a}
\varphi  (P_0-\lambda )u=\varphi (P_1-\lambda )u\text{ on }\rn, 
 \end{equation}
where $P_{1}\colon H^{2a}_q(\rn)\to L_q(\rn)$ is $\mathcal R$-sectorial on $V_{\delta ,K}$ for some
$K\ge 0$. 
\end{prop}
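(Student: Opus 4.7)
The plan is to mimic the argument of Proposition~\ref{prop:5.4}, but stripped of the boundary machinery: since $x_0\in\Omega$, a small ball around $x_0$ lies entirely in $\Omega$, so no boundary-flattening diffeomorphism $F_\zeta$ and no auxiliary curved-halfspace domain $\Sigma_1$ are needed --- everything takes place on $\rn$. As the base case for perturbation, in place of Proposition~\ref{prop:5.2} we use Proposition~\ref{prop:5.1}: after translating $x_0$ to $0$, extend the frozen principal symbol $p_0(0,\cdot)$ to an $\overline p\in S^{2a}(\rn\times\rn)$ coinciding with $p_0(0,\xi)$ for $|\xi|\ge 1$ and strictly positive elsewhere (exactly as in Proposition~\ref{prop:5.2}), and set $\overline P=\Op(\overline p)$. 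Proposition~\ref{prop:5.1} then yields $\delta>0$ and $K_0\ge 0$ such that $\overline P$, with domain $H^{2a}_q(\rn)$, is $\mathcal R$-sectorial on $V_{\delta,K_0}$.

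Next, reuse the scaling construction \eqref{eq:5.9}--\eqref{eq:5.10}:
\[
p_z(x,\xi)=\eta(x)\,p_0(zx,\xi)+(1-\eta(x))\,p_0(0,\xi),\qquad q_z(x,\xi)=p_0(zx,z^{-1}\xi)-z^{-2a}p_0(zx,\xi),
\]
with $q_z\in C^\tau S^{-\infty}(\rn\times\rn)$ as before. Lemma~\ref{lem:5.6} gives $|p_z-\overline p|_{k,C^\tau S^{2a}_{1,0}}=O(z^{\min(1,\tau)})$; a direct application of \eqref{eq:2.11} (no analog of Lemma~\ref{lem:5.5} is needed since we work on all of $\rn$) makes $\|\overline P-P_z\|_{\mathcal L(H^{2a}_q(\rn),L_q(\rn))}$ arbitrarily small as $z\to 0$. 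Proposition~\ref{prop:4.5}$\,2^\circ$ then produces some $z\in(0,1]$ and $K_1\ge K_0$ such that $P_z=\Op(p_z)$ is $\mathcal R$-sectorial on $V_{\delta,K_1}$ with domain $H^{2a}_q(\rn)$. Fix such a $z$.

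With this $z$ fixed, repeat the localization computation \eqref{eq:5.11}--\eqref{eq:5.19} verbatim: for $u\in D_q(\comega)$ with $\supp u\subset B_{z/4}(x_0)$, the dilate $\sigma_zu(x)=u(zx)$ satisfies
\[
\psi\,\sigma_z((P_0-\lambda)u)=\psi'\bigl(z^{-2a}P_z+S_z-\lambda\bigr)\sigma_zu\quad\text{on }\rn,
\]
with $\psi=\chi_{1,1/2}$, $\psi'=\chi_{1+\varepsilon,1}$ and $S_z=\psi Q_z+z^{-2a}[\psi,P_z]$ of order $\max\{0,2a-1\}<a$. By the interpolation bound \eqref{eq:4.13} of Remark~\ref{rem:4.8}, $S_z$ is a small relative perturbation of $z^{-2a}P_z$, so a further application of Proposition~\ref{prop:4.5}$\,2^\circ$ gives some $K_2\ge K_1$ such that $z^{-2a}P_z+S_z$ is still $\mathcal R$-sectorial on $V_{\delta,K_2}$. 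Define $P_1$ as the conjugate of $z^{-2a}P_z+S_z$ by the dilation $x\mapsto y=zx$; multiplying the displayed identity by a dilate of $\varphi$ and pulling back yields \eqref{eq:5.20a}.

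Essentially no new obstacle arises beyond what was already handled in Proposition~\ref{prop:5.4}; the only item that deserves checking is that $P_1$ still satisfies Hypothesis~\ref{hypo:3.1}. The classicality, $C^\tau$-symbol class, and strong ellipticity of $p_z$ are inherited from $p_0$ and preserved by dilation in $x$. For the evenness property \eqref{eq:3.1}, note that the position-$j$ homogeneous term of the classical expansion of $[\psi,P_z]$ has the form $c_\alpha\,\partial_\xi^\alpha p_z\cdot D_x^\alpha\psi$ with $|\alpha|=j$, hence $\xi$-parity $(-1)^j$ since $p_z$ itself is even; combined with the fact that $Q_z$ is smoothing (and thus contributes no classical terms) this gives the alternating symmetry for $z^{-2a}P_z+S_z$, which then descends to $P_1$.
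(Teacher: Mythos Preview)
Your proof is correct and follows essentially the same route as the paper's: start from Proposition~\ref{prop:5.1} (instead of Proposition~\ref{prop:5.2}), use the simplified $\rn$-version of Lemma~\ref{lem:5.5} via \eqref{eq:2.11}, invoke Lemma~\ref{lem:5.6} and Proposition~\ref{prop:4.5}\,$2^\circ$ to get $\mathcal R$-sectoriality of $P_z$, then repeat the localization and scaling of \eqref{eq:5.11}--\eqref{eq:5.19} with $\rn$ in place of $\Sigma_z$. One small imprecision: at interior points $p_0(x_0,\xi)$ need not be real, so ``strictly positive elsewhere'' should read ``with $\operatorname{Re}\overline p>0$''---but in fact $p_0(0,\cdot)$ already lies in $S^{2a}$ and Proposition~\ref{prop:5.1} applies to it directly, so the extension step is unnecessary anyway. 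Your explicit verification that $P_1$ inherits the evenness \eqref{eq:3.1} (viewing the commutator's $|\alpha|$-th term as the $j=|\alpha|$ term in the order-$2a$ expansion) is a detail the paper leaves implicit.
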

\begin{proof} 
Here  we depart from Proposition~\ref{prop:5.1} in a similar way. Consider an
interior point
$x_0\in\Omega $. We have from Proposition~\ref{prop:5.1} that there are
 $\delta >0$ and $K\ge 0$ such that  $\ol P=\Op(p(x_0,\cdot ))$
satisfies an estimate
$$
\|(\ol P-\lambda )u\|_{L_q(\rn)}\ge c_0|\lambda |\|u\|_{L_q(\rn)}
\quad\text{for all }\lambda \in V_{\delta ,K},
$$
with $\mathcal R$-boundedness of $\lambda (\ol P-\lambda )^{-1}$ on $V_{\delta ,K}$.
By a dilation, we can assume that $\ol B_4(x_0)\subset \Omega $. There
is a version of Lemma~\ref{lem:5.5} stating that  {for every $\eps>0$ there is some $\eps'>0$ such that the first inequality in \eqref{eq:5.6}
assures that $\|\ol P- P_0 \|_{\mathcal
L(H^{2a} _q(\rn ),L_q(\rn  ))}\le \varepsilon $}. Then we get when $p_0$ is close enough to $\ol p$
that for some $K'\ge K$,
$$
\|( P_0-\lambda )u\|_{L_q(\rn)}\ge c_0|\lambda |\|u\|_{L_q(\rn)}
\quad \text{for all }\lambda \in V_{\delta ,K'},
$$
with $\mathcal R$-boundedness of $\lambda ( P_0-\lambda )^{-1}$ on $V_{\delta ,K'}$.

Define $p_z$ and $q_z$ as in \eqref{eq:5.9}ff. Solutions
supported in balls $B_r(x_0)$ with $r<4$ are then simply in $\dot
H_q^{2a}(\ol B_r(x_0))$ (and no modification of a boundary is
needed). The result is now obtained by repeating the arguments from
the proof of  {Theorem}~\ref{thm:5.4}, with $\rn$ as the auxiliary domain instead of
$\Sigma _z$. 
\end{proof}

Our aim is now  to use these very local statements to control  operators over $\Omega $.

It was shown in
\cite{G23} that the spectrum of the Dirichlet realization of $P$,
known in the $L_2$-setting to be a discrete set $\Sigma $ contained in a
sector opening to the right, is the same in the $L_q$-setting for all
$1<q<\infty $.  So we know already that the resolvent equation
\begin{equation}
  \label{eq:5.21}
(P-\lambda )u=f \text{ in }\Omega ,\quad u=0 \text{ in }\rn\setminus \Omega, 
\end{equation}
has a unique solution for $\lambda $ in a suitable sector $V_{\delta
,K}$; it is the estimate of the solution operator for large $\lambda $
that we need to
show.

Resolvent estimates are easy to deduce in the
$L_2$-setting from the variational theory. We want to obtain them
for general $q$, including $\mathcal R$-boundedness, when $P$ has real positive principal symbol at the
boundary points. 

\begin{thm}\label{thm:5.8}
Let $\Omega $ be bounded with  $C^{1+\tau
}$-boundary, $\tau >2a$, and let $1<q<\infty $. Let
$P=\operatorname{OP}(p)$ satisfy Hypothesis~\ref{hypo:3.1}, and
assume that the principal symbol $p_0(x,\xi )$ is real positive at
each boundary point   $x\in \partial\Omega$.
Then there are constants $\delta >0$, $c_0>0$ and $K_0\ge 0$ such that
$P-\lambda $ satisfies an estimate for all $u\in D_q(\comega)=H^{a(2a)}(\comega)$:
\begin{equation}
  \label{eq:5.22}
\|(P-\lambda )u\|_{L_q(\Omega   )}\ge
c_0|\lambda |\|u\|_{L_q ( \Omega  )}\text{ when }\lambda 
\in V_{\delta ,K_0},
\end{equation}
with $\mathcal R$-boundedness of the family $\{\lambda (P_D-\lambda
)^{-1}\mid \lambda \in V_{\delta ,K_0}\}$ in $\mathcal L(L_q(\Omega ))$.
\end{thm}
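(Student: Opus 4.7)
The overall plan is to assemble the $\mathcal{R}$-sectoriality of $P_D$ over some $V_{\delta,K_0}$ from the local versions in Propositions~\ref{prop:5.4} and \ref{prop:5.7}, by a finite partition-of-unity patching followed by absorption of lower-order remainders. As a first step I would reduce to the principal part $P_0=\Op(p_0)$: the difference $P-P_0$ has symbol in $C^\tau S^{2a-1}$, hence is of order $2a-1<a+\tfrac1q$ (as $a<1$), so by Remark~\ref{rem:4.8} it satisfies the small-perturbation bound \eqref{eq:4.7} for any $\alpha>0$. Proposition~\ref{prop:4.5}~$2^\circ$ then transfers $\mathcal{R}$-sectoriality from $P_{0,D}$ to $P_D$ on a possibly smaller sector, so it suffices to work with $P_0$.

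Next, I would cover $\overline{\Omega}$ by finitely many balls $B_{z_j/4}(x_j)$, $j=1,\dots,N$, where each $x_j$ lies in $\partial\Omega$ or in $\Omega$, and the $z_j\in(0,1]$ are small enough that Proposition~\ref{prop:5.4} (boundary case) or Proposition~\ref{prop:5.7} (interior case) applies, producing a model operator $P_j$ whose Dirichlet realization $P_{j,D}$ on the ambient domain ($\Sigma_j$, or $\R^n$ for interior points) is $\mathcal{R}$-sectorial on some $V_{\delta_j,K_j}$. Pick a subordinate partition of unity $\{\varphi_j\}$ with $\supp\varphi_j\subset B_{z_j/4}(x_j)$ and secondary cut-offs $\tilde{\varphi}_j\in C_0^\infty(B_{z_j/2}(x_j))$ with $\tilde{\varphi}_j\equiv 1$ on $\supp\varphi_j$. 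For $u\in D_q(\overline{\Omega})$ each $\varphi_j u$ lies in the local transmission space, and the key identity $\tilde{\varphi}_j(P_0-\lambda)(\varphi_j u)=\tilde{\varphi}_j(P_j-\lambda)(\varphi_j u)$ from Proposition~\ref{prop:5.4}/\ref{prop:5.7}, combined with $(1-\tilde{\varphi}_j)\varphi_j=0$, yields
\begin{equation*}
\varphi_j u=(P_{j,D}-\lambda)^{-1}\bigl[\varphi_j(P_0-\lambda)u+\tilde{\varphi}_j[P_0,\varphi_j]u+(1-\tilde{\varphi}_j)P_j(\varphi_j u)\bigr].
\end{equation*}

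Summing $j$ and multiplying by $\lambda$, the contribution $\sum_j\lambda(P_{j,D}-\lambda)^{-1}\varphi_j f$ (with $f=(P_0-\lambda)u$) is $\mathcal{R}$-bounded on $L_q(\Omega)$ by the sum/product rules of Proposition~\ref{prop:4.3}, since each local family is $\mathcal{R}$-sectorial and multiplication by $\varphi_j$ (together with restriction/extension by zero between $L_q(\Omega)$ and $L_q(\Sigma_j)$) is a single bounded operator. For the remainders, $[P_0,\varphi_j]$ is of order $2a-1$ and $(1-\tilde{\varphi}_j)P_j\varphi_j$ is essentially smoothing because its left and right cut-offs have disjoint supports; both send $\dot H_q^{2a-1}(\overline{\Omega})$ boundedly into $L_q$. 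Remark~\ref{rem:4.8} supplies an inequality $\|v\|_{\dot H_q^{2a-1}(\overline{\Omega})}\le\varepsilon\|v\|_{D_q(\overline{\Omega})}+C_\varepsilon\|v\|_{L_q(\Omega)}$ with arbitrarily small $\varepsilon$, while Theorem~\ref{thm:4.7}~$1^\circ$ bounds $\mathcal{R}_{\mathcal{L}(L_q(\Sigma_j))}\{(P_{j,D}-\lambda)^{-1}\}$ on $V_{\delta,K_1}$ by $C/K_1$. Together these make the remainder family on the right-hand side $\mathcal{R}$-bounded with $\mathcal{R}$-bound $\le\tfrac12$ (for $\varepsilon$ small and $K_1$ large), so a Neumann-series argument extracts the desired $\mathcal{R}$-bound of $\{\lambda(P_{0,D}-\lambda)^{-1}\}$, which in particular gives \eqref{eq:5.22}.

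The principal obstacle is controlling the commutator $[P_0,\varphi_j]$ under the limited regularity $p_0\in C^\tau S^{2a}$: one must verify that the symbolic composition still yields an operator of order strictly less than $2a$, whose $L_q$-norm is dominated by an intermediate Sobolev norm strictly weaker than the $D_q$-graph norm, since otherwise the interpolation-absorption step would be circular. Once this is in hand, a mild further care is required in composing the local $\mathcal{R}$-bounds across the different ambient spaces $\Sigma_j$ (and $\R^n$ for interior balls), which works because the cut-off, restriction and extension operators entering the formula are single bounded maps whose composition with an $\mathcal{R}$-bounded family remains $\mathcal{R}$-bounded by Proposition~\ref{prop:4.3}~$2^\circ$.
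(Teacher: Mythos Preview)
Your plan is essentially the paper's own localization--Neumann-series strategy: reduce to $P_0$ via Proposition~\ref{prop:4.5}, cover $\comega$ by small balls furnished by Propositions~\ref{prop:5.4} and \ref{prop:5.7}, write $u=R_{0,\lambda}f+T_\lambda u$ with $T_\lambda$ built from order-$(2a-1)$ commutator remainders, and invert $I-T_\lambda$. Your local identity (with the extra term $(1-\tilde\varphi_j)P_j\varphi_j$) is a legitimate variant of the paper's \eqref{eq:5.27}; that term is indeed of order $\le 2a-1$ via a single commutator with an interposed cut-off, even for $C^\tau$-symbols.

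The genuine gap is in the absorption step when $a>\tfrac12$. Your argument combines Remark~\ref{rem:4.8} with Theorem~\ref{thm:4.7}~$1^\circ$ to claim the remainder family has $\mathcal R$-bound $\le\tfrac12$ in $\mathcal L(L_q)$. But for $a>\tfrac12$ the operators $S_j$ (of positive order $2a-1$) are not bounded on $L_q$, so $T_\lambda=\sum_j(P_{j,D}-\lambda)^{-1}S_j$ is not even in $\mathcal L(L_q(\Omega))$; the $\varepsilon$--$C_\varepsilon$ inequality only trades $\|u\|_{\dot H^{2a-1}_q}$ against $\|u\|_{D_q}$, which is circular here. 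The paper repairs this in two moves: first it shows $T_\lambda$ is a contraction in $\dot H_q^{(2a-1)_+}(\comega)$ (so the Neumann series converges and $R_\lambda=\sum_kT_\lambda^kR_{0,\lambda}$ is well defined), and second it estimates $\mathcal R$-bounds of $\lambda T_\lambda^kR_{0,\lambda}$ termwise by pairing each $S_i$ with the \emph{following} resolvent factor and invoking Theorem~\ref{thm:4.7}~$3^\circ$ (not $1^\circ$), which gives $\mathcal R_{\mathcal L(L_q)}\{S_iR_{j,\lambda}\}\le CK_1^{-\theta}$ with $\theta=1-(2a-1)_+/a>0$; the resulting geometric decay yields a convergent $\mathcal R$-bound for the series. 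Your sketch should be reorganized along these lines. By contrast, the ``principal obstacle'' you flag---that $[P_0,\varphi_j]$ loses one order for $C^\tau$-symbols---is standard and not where the proof actually bites.
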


 \begin{proof}
 We can assume  $P=P_0$, since $P-P_0$ is a $\psi $do of order
$2a-1$ to which Proposition~\ref{prop:4.5}  can be applied as soon as the
estimates are established for $P_0$. (One here uses  Remark~\ref{rem:4.8},
observing that  $s=(2a-1)_+=\max\{2a-1,0\}$ is $<a$.)

By  {Theorem}~\ref{thm:5.4}, there is for every $x\in \partial\Omega $ a ball
$B_r(x)$ and an auxiliary $C^{1+\tau }$-domain $\Sigma _1$ and $\mathcal
R$-sectorial operator $P_1$ on $D_q(\ol\Sigma _1)$ such that $\varphi  (P_0-\lambda )u=\varphi 
(P_1-\lambda )u$ when $u\in D_q(\comega)$ with support in $B_{r/4}(x)$
and $\varphi  \in C_0^\infty (B_{r/2}(x))$; here the $B_s(x)$, $0<s\le r$, are
neighborhoods of the kind $U_j$ ($j\ge 1$) described before (2.15). A related statement holds for
interior points $x$, by Proposition~\ref{prop:5.7}; here the auxiliary domain
$\Sigma _1$ is simply $\rn$.
Since $\comega$ is compact, there is a finite cover $B_{r_i/4}(x_i)$, $i=1,\dots,N$, of
$\comega $ by such balls. Introduce a partition of unity $\{\varrho
_i\}_{i=1,\dots ,N}$ (with $ \varrho _i\in
C_0^\infty ( B_{r_i/4}(x_i), [0,1])$, satisfying $\sum_{1\le i\le N}\varrho _i=1$ on
$\comega$), and choose functions $\psi _i\in C_0^\infty (
B_{r_i/2}(x_i)$ that are 1 on $ B_{r_i/4}(x_i)$. Denote by $P_i$ and
$\Sigma _i$ the associated operator and domain for which
\begin{equation}
  \label{eq:5.24}
  \varphi  (P_0-\lambda )u=\varphi  (P_i-\lambda)u
\end{equation}
holds when $u\in D_q(\comega)$ with $\supp u\subset  B_{r_i/4}(x_i)$, and $\supp \varphi \subset
B_{r_i/2}(x_i)$, $\lambda \in V_{\delta ,K_i}$, according to
 {Theorem}~\ref{thm:5.4} and Proposition
\ref{prop:5.7}.

We want to construct an approximate inverse of $P_{0,D}-\lambda $ by use of
these identities in the local coordinate patches.

For a given  $f\in L_q(\Omega )$, let $u=u(\lambda )\in D_q(\comega)$
be the family of functions satisfying 
$$
(P_0-\lambda )u(\lambda )=f\text{ on }\Omega ,\text{ for } \lambda \in V_{\delta ,K} .
$$
By multiplication by $\varrho _i$, we find $\varrho _i(P_0-\lambda
)u=\varrho _if$, and hence
\begin{equation}
  \label{eq:5.25}
(P_0-\lambda )\varrho _iu=\varrho _if+[\varrho _i,P_0]u\quad \text{on }\Omega .
\end{equation}
Multiplication by $\psi _i$ gives 
$$
\psi _i(P_0-\lambda )\varrho _iu=\psi _i\varrho _if+\psi _i[\varrho
_i,P_0]u
=\varrho _if+\psi _i[\varrho
_i,P_0]u\quad \text{on }\Omega .
$$

By  {Theorem} \ref{thm:5.4} and Proposition \ref{prop:5.7}, the left-hand side equals $\psi
_i(P_i-\lambda )\varrho _iu$, hence
$$
\psi _i(P_iu-\lambda )\varrho _iu
=\varrho _if+\psi _i[\varrho
_i,P_0]u\quad \text{on }\Omega , \text{ supported in }B_{r_i/2}(x_i).
$$
In particular,
\begin{equation}
  \label{eq:5.26}
1_{\Sigma _i}\psi _i(P_i-\lambda )\varrho _iu
=1_{\Sigma _i}(\varrho _if+\psi _i[\varrho
_i,P_0]u)\quad\text{on }\Omega .
\end{equation}
Here we observe that 
$$
1_{\Sigma _i}\varrho _if=1_\Omega \varrho _if,
$$
when $f$ is considered as extended by 0 outside $\Omega $.

For the left-hand side of \eqref{eq:5.26}, we note that by a commutation with $\psi _i$,
$$
\psi _i(P_i-\lambda )\varrho _iu=(P_i-\lambda )\varrho _iu-[P_i,\psi
_i]\varrho _i\psi _iu, 
$$
since $\psi _i\varrho _i=\varrho _i$, and for the right-hand side,
$$
\psi _i[\varrho _i,P_0]u=[\varrho _i,P_0]\psi _iu+[\psi _i,[\varrho _i,P_0]]u;
$$
this leads to the formula 
\begin{equation}
  \label{eq:5.27}
\begin{split}
1_{\Sigma _i}(P_i-\lambda
)\varrho _iu
&=1_{\Sigma _i}\varrho _if+1_{\Sigma _i}S_i\psi _iu+1_{\Sigma
_i}S_i'u, \text{ with}\\
S_i&=[P_i,\psi _i]\varrho _i+[\varrho _i,P_0],\\
S_i'&=[\psi _i,[\varrho _i,P_0]].
\end{split}
\end{equation}
Here $S_i$ is a $\psi $do of order $2a-1$ and $S_i'$ is of order
$2a-2$; the latter order is $\le 0$ and the former is so when $a\le \frac12$.

Now compose all this with
$(P_{i,D}-\lambda )^{-1}\colon L_q(\Sigma _i)\to D_q(\ol\Sigma _i)$,
arriving at
\begin{equation}
  \label{eq:5.28}
\varrho _iu
=(P_{i,D}-\lambda )^{-1}1_{\Sigma _i}\varrho _if+(P_{i,D}-\lambda )^{-1}1_{\Sigma _i}(S_i\psi _iu+S_i'u). 
\end{equation}
This has the form of an $\mathcal R$- {bounded} operator family acting on $f$ and two
operators acting on $u$ with lower
order factors, one of them applied to the global $u$. Summation over $i$ gives a
representation of $u=R_\lambda f$ as an $\mathcal R$- {bounded} sum  and a
remainder term that should behave better for $|\lambda |\to\infty $:
\begin{equation}
  \label{eq:5.29}
  \begin{split}
R_\lambda f&=u=R_{0,\lambda }f+T_\lambda u, 
\text{ where }\\
R_{0,\lambda }f&=\sum_{i\le N}(P_{i,D}-\lambda )^{-1}1_{\Sigma
_i}\varrho _if,\\
T_\lambda u&=\sum_{i\le N}(P_{i,D}-\lambda )^{-1}1_{\Sigma _i}(S_i\psi
_iu+S_i'u). 
\end{split}
\end{equation}
Here we let $\lambda \in V_{\delta ,\ol K}$, where $\ol K=\max_{i\le
N}\{K_i\}$. The first line shows:
\begin{equation}
  \label{eq:5.30}
(1-T_\lambda )R_\lambda =R_{0,\lambda }\quad \text{on $\Omega $ for }\lambda \in
V_{\delta ,\ol K}
.
\end{equation}

To obtain a useful formula for $R_\lambda $ from $R_{0,\lambda }$ and
$T_\lambda $ is easiest when $a\le \frac12$, since all the $S_i$ and  $S_i'$
are then bounded in $L_q$-norm. However, we shall give just one formulation of
the proof that works for all $0<a<1$.

Consider
\begin{equation}
  \label{eq:5.33}
H_\lambda 
=\sum _{k=0}
^\infty T_\lambda ^kR_{0,\lambda }.
\end{equation}
If the series converges, then 
$$
H_\lambda -T_\lambda H_\lambda= \sum _{k=0}
^\infty T_\lambda ^kR_{0,\lambda }-\sum _{k=1}
^\infty T_\lambda ^kR_{0,\lambda }=R_{0,\lambda },
$$
so 
$$
(1-T_\lambda )H_\lambda =R_{0,\lambda }.
$$
This is the equation, $R_\lambda $ should solve, cf.\ \eqref{eq:5.30}. If
$1-T_\lambda $ is invertible in a suitable sense,
we can conclude that
$R_\lambda =H_\lambda $.

Let us first investigate the invertibility of $1-T_\lambda $. We have for $R_{i,\lambda }=(P_{i,D}-\lambda )^{-1}$ the standard resolvent
estimates when $\lambda \in V_{\delta ,\ol K}$:
$$
\|\lambda R_{i,\lambda }f\|_{L_q( \Sigma _i)}\le c\|f\|_{L_q(\Sigma
_i)},\quad
\| R_{i,\lambda }f\|_{\dot H_q^a(\ol \Sigma _i)}\le c_1\| R_{i,\lambda }f\|_{D_q(\ol \Sigma _i)}
\le c_2\|f\|_{L_q(\Sigma _i)},
$$
when $f\in L_q(\ol \Sigma _1)$.
Since  $(2a-1)_+\in \,[0,a)$, there is an
interpolation inequality (as in \eqref{eq:4.11})
\begin{equation}
  \label{eq:5.34}
\|v\|_{\dot H_q^{2a-1}(\ol \Sigma _i)} \le c_3\|v\|^\theta _{L_q(
\Sigma _i)} \|v\|^{1-\theta }_{\dot H_q^{a}(\ol \Sigma _i)},
\end{equation}
where $\theta =1-(2a-1)_+/a$, equal to $(1-a)/a$ if $a > \frac12$
and $1$ if $a\le \frac12$. Then for $u\in \dot
H_q^{(2a-1)_+}(\ol \Omega )$,
\begin{align*}
\| R_{i,\lambda }1_{\Sigma _i}(S_i\psi _i&+S_i')u\|_{\dot H_q^{(2a-1)_+}(\ol \Sigma _i)}\le c_3 \|
R_{i,\lambda }1_{\Sigma _i}(S_i\psi _i+S_i')u\|^\theta _{L_q( \Sigma _i)}\| R_{i,\lambda
}1_{\Sigma _i}(S_i\psi _i+S_i')u\|^{1-\theta }_{\dot H_q^{a}(\ol \Sigma _i)}\\
&\le c_3|\lambda |^{-\theta }(c_1\|1_{\Sigma _i}(S_i\psi _i+S_i')u\|_{L_q( \Sigma _i)})^\theta
(c_2\|1_{\Sigma _i}(S_i\psi _i+S_i')u\|_{L_q( \Sigma _i)})^{1-\theta}\\
&\le c_4|\lambda |^{-\theta }(\|\psi _iu\|_{\dot H_q^{(2a-1)_+}(\ol \Sigma
_i)}+\|u\|_{L_q(\Omega )})\le c_5|\lambda |^{-\theta }\|u\|_{\dot H_q^{(2a-1)_+}(\ol \Omega )}.
\end{align*}
It follows that 
$$
\|T_\lambda u\|_{\dot H_q^{(2a-1)_+}(\ol \Omega )}
=\Big\|\sum_{i=1}^N R_{i,\lambda }1_{\Sigma _i}(S_i\psi _i+S_i')u\Big\|_{\dot H_q^{(2a-1)_+}(\ol \Omega )}\le c_6|\lambda |^{-\theta }\|u\|_{\dot H_q^{(2a-1)_+}(\ol \Omega )}.
$$
Thus for $|\lambda |$ sufficiently large, $\sum_{k\ge 0}T_\lambda ^k$
converges in $\mathcal L(\dot H_q^{(2a-1)_+}(\ol \Omega ))$, so $1-T_\lambda $
is bijective there, and since $H_\lambda $ and $R_\lambda $ range in
the subspace $D_q(\comega)$, $R_\lambda $ identifies with $H_\lambda $.

Now let us show $\mathcal R$-boundedness for large $|\lambda |$. The $k$'th term
in the series is
$$
T_\lambda ^kR_{0,\lambda }=T_\lambda ^k\sum_{j=1}^N R_{j,\lambda }1_{\Sigma _j}\varrho _j.
$$
For $k=1$,   $\lambda T_\lambda R_{0,\lambda }=\lambda  \sum_{i,j=1}^NR_{i,\lambda }1_{\Sigma _i}(S_i\psi
_i+S_i')R_{j,\lambda }1_{\Sigma _j}\varrho _j$ has an $\mathcal R$-bound estimated by
\begin{align*}
&\mathcal R_{\mathcal L(L_q(\Omega ))}\Big\{\lambda \sum_{i,j=1}^NR_{i,\lambda }1_{\Sigma _i}(S_i\psi
                 _i+S_i')R_{j,\lambda }1_{\Sigma _j}\varrho _j\big|\lambda\in V_{\delta,K_1}\Big\}\\
  &\le \sum_{i,j=1}^N\mathcal R_{\mathcal L(L_q(\Omega ))}\{\lambda
R_{i,\lambda }|\lambda\in V_{\delta,K_1}\}\mathcal R_{\mathcal L(L_q(\Omega ))} \{1_{\Sigma _i}(S_i\psi
_i+S_i')R_{j,\lambda }1_{\Sigma _j}\varrho _j|\lambda\in V_{\delta,K_1}\}
\end{align*}
by the sum and product rules. Since $S_i\psi
_i+S_i'$ is of order $(2a-1)_+$, we can use Theorem~\ref{thm:4.7} $3^\circ$,
\eqref{eq:5.34} and the fact that $D_q(\ol\Sigma _1)\subset \dot H_q^a(\ol \Sigma _1)$ to
show that for $K_1\ge \ol K$,  the $\mathcal
R$-bound of the second factor in each term is $\le c K_1^{-\theta }$
when $\lambda \in V_{\delta ,K_1}$. Denote
$$
\max_{i\le N}\mathcal R_{\mathcal L(L_q(\Omega ))}\{\lambda
R_{i,\lambda }\mid \lambda \in V_{\delta ,\ol K}\}=C_0.
$$
For a given $0<\varepsilon <1$, take $K_1$ so large that for all
$i,j=1,\dots, N$,
\begin{equation}
  \label{eq:5.35}
\mathcal R_{\mathcal L(L_q(\Omega ))} \{1_{\Sigma _i}(S_i\psi
_i+S_i')R_{j,\lambda }1_{\Sigma _j}\varrho _j\mid \lambda \in V_{\delta
,K_1}\}\le \varepsilon .
\end{equation}
Then by summation over $i,j$,
$$
\mathcal R_{\mathcal L(L_q(\Omega ))}\{\lambda T_\lambda R_{0,\lambda }\mid \lambda \in V_{\delta
,K_1}\}\le C_0N^2\varepsilon .
$$
For $T^kR_{0,\lambda }$ there are similar formulas with $k$ factors of
the second type:
$$
T_\lambda ^kR_{0,\lambda }=\sum_{i_1,\dots, i_{k+1}=1}^NR_{i_1,\lambda }1_{\Sigma _{i_1}}(S_{i_1}\psi
_{i_1}+S_{i_1}')\dots R_{i_k,\lambda }1_{\Sigma _{i_k}}(S_{i_k}\psi
_{i_k}+S_{i_k}')
R_{i_{k+1},\lambda }1_{\Sigma _{i_{k+1}}}\varrho _{i_{k+1}}.
$$
Here we find  the estimate 
$$
\mathcal R_{\mathcal L(L_q(\Omega ))}\{\lambda T^k_\lambda R_{0,\lambda }\mid \lambda \in V_{\delta
,K_1}\}\le C_0N^{k+1}\varepsilon ^k.
$$
Then, if we adapt the choice of $K_1$ such that \eqref{eq:5.35} holds with $\varepsilon <1/N$, the
series \eqref{eq:5.33} converges with respect to $\mathcal R$-bounds (by \cite[Proposition~4.8]{DHP03}). Then $R_\lambda =H_\lambda $ has been determined and is $\mathcal
R$-sectorial on $V_{\delta ,K_1}$. 
\end{proof}

\begin{rem}\label{rem:5.9} It is seen from the proof that the evenness of
the symbol of $P$ is only needed in a small neighborhood of the
boundary; away from this,  strong ellipticity suffices.
\end{rem}

\section{Results for linear evolution equations}

We now turn to the consequences for heat problems.

Thanks to the results in Section~\ref{sec:5}, we can now obtain maximal
regularity results in much
more general cases than the one in Proposition~\ref{prop:5.2}.

\begin{thm}\label{thm:6.1}
    Let $\Omega $ be bounded with  $C^{1+\tau
}$-boundary for some $\tau >2a$, and let $1<p,q<\infty $. Let
$P=\operatorname{OP}(p)$ satisfy Hypothesis~\ref{hypo:3.1}, and
assume that the principal symbol $p_0(x,\xi )$ is real positive at
each boundary point   $x\in \partial\Omega$. Let $I=(0,T)$ for some
$T\in (0,\infty)$. 
Then for any
$f\in L_p(I;L_q(\Omega))$, the heat equation \eqref{eq:1.1} has  a unique solution $u\in
C^0(\overline{I};L_q(\Omega ))$  satisfying
\begin{equation}
  \label{eq:6.1}
u\in L_p(I;D_q(\overline \Omega ))\cap  H^1_p( I;L_q(\Omega
)).
\end{equation}
\end{thm}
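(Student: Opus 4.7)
The plan is to read Theorem~\ref{thm:6.1} as a direct consequence of the $\mathcal R$-sectoriality established in Theorem~\ref{thm:5.8}, combined with the abstract characterization of maximal $L_p$-regularity in Theorem~\ref{thm:4.4}. Concretely, I would take $A = P_D$ in the UMD space $X = L_q(\Omega)$ with domain $D(A) = D_q(\overline\Omega)$; Theorem~\ref{thm:5.8} yields $\delta > 0$ and $K_0 \geq 0$ such that the family $\{\lambda(P_D - \lambda)^{-1} \mid \lambda \in V_{\delta, K_0}\}$ is $\mathcal R$-bounded in $\mathcal L(L_q(\Omega))$.

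The first step is to shift to a sector through the origin. By the contraction principle of Kahane (used already in the proof of Theorem~\ref{thm:4.7}), or directly by the remark following Theorem~\ref{thm:4.4}, the $\mathcal R$-bound on $V_{\delta,K_0}$ upgrades, for any $k > K_0$, to an $\mathcal R$-bound of $\{\lambda(P_D + k - \lambda)^{-1} \mid \lambda \in V_{\delta, 0}\}$: one writes $\lambda(P_D + k - \lambda)^{-1} = (\lambda - k)(P_D - (\lambda - k))^{-1} + k(P_D - (\lambda - k))^{-1}$ and observes that $\lambda - k \in V_{\delta, K_0}$ whenever $\lambda \in V_{\delta, 0}$ and $k$ is chosen large, together with the bound on $(P_D - \mu)^{-1}$ from Theorem~\ref{thm:4.7}~$1^\circ$. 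Theorem~\ref{thm:4.4} then gives maximal $L_p$-regularity of $P_D + k$ on $\mathbb R_+$.

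The second step is to descend from $\mathbb R_+$ to the finite interval $I = (0,T)$ and from $P_D + k$ back to $P_D$. Given $f \in L_p(I; L_q(\Omega))$, extend $f$ by zero to $\mathbb R_+$, set $g(t) = e^{-kt} f(t)$, and let $\tilde u \in L_p(\mathbb R_+; D_q(\overline\Omega)) \cap H^1_p(\mathbb R_+; L_q(\Omega))$ be the unique solution of $\partial_t \tilde u + (P_D + k)\tilde u = g$ with $\tilde u(0) = 0$ provided by maximal regularity. A direct computation shows that $u(t) := e^{kt} \tilde u(t)$ satisfies $\partial_t u + P_D u = f$, $u(0) = 0$. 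Since $e^{kt}$ is bounded above and below on $\overline I$, $u$ lies in $L_p(I; D_q(\overline\Omega)) \cap H^1_p(I; L_q(\Omega))$. Uniqueness on $I$ follows by the same substitution applied to any two solutions.

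Finally, the continuity $u \in C^0(\overline I; L_q(\Omega))$ is the classical embedding $H^1_p(I; X) \hookrightarrow C^0(\overline I; X)$ valid for $1 < p < \infty$ and any Banach space $X$. I do not anticipate a hard step: all of the real work has been absorbed into Theorems~\ref{thm:4.4} and~\ref{thm:5.8}. The only point demanding mild care is the shift argument $P_D \rightsquigarrow P_D + k$, which must not destroy $\mathcal R$-boundedness — but this is precisely the content of the remark after Theorem~\ref{thm:4.4}, and the substitution $u = e^{kt}\tilde u$ on the bounded interval $I$ does the rest.
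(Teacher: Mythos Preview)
Your proposal is correct and follows essentially the same route as the paper: invoke Theorem~\ref{thm:5.8} for $\mathcal R$-sectoriality of $P_D$ on $V_{\delta,K_0}$, shift by a constant $k$ to reach $V_{\delta,0}$, apply Theorem~\ref{thm:4.4} to obtain maximal $L_p$-regularity of $P_D+k$ on $\R_+$, and then pass to $P_D$ on finite intervals. You simply spell out in more detail the shift argument (via the $e^{kt}$ substitution) and the continuity embedding, which the paper leaves implicit.
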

\begin{proof} Because of Theorem~\ref{thm:5.8}, the shifted operator  $
P_{D,q}+k\colon D(P_{D,q})= H^{a(2a)}_q(\comega)\subset
L_q(\Omega)\to L_q(\Omega)$ satisfies the second statement of
Theorem~\ref{thm:4.4} for some $k>0$ sufficiently large. Hence $P_{D,q}+k$
has maximal $L_p$-regularity on $I=\R_+$. This implies that
$P_{D,q}$ has maximal $L_p$-regularity on $I=(0,T)$ for any $T\in
(0,\infty)$. 
\end{proof}

Note that the theorem allows $p\ne q$.

Nonhomogeneous boundary problems can also be considered. There is a
local  Dirichlet boundary condition associated with $P$, namely
the assignment of $\gamma _0(u/d_0^{a-1})$;
recall $d_0(x)=\operatorname{dist}(x,\partial\Omega )$ near
$\partial\Omega $, extended smoothly to $\Omega $. As shown in
earlier works (cf.\ \cite{G15}, \cite{G23}), it is natural to study the  problem
\begin{equation}
  \label{eq:6.2}
Pu=f\text{ in }\Omega ,\quad \gamma _0(u/d_0^{a-1})=\varphi ,\quad
\operatorname{supp}u\subset \comega, 
\end{equation}
for $u$ in the $(a-1)$-transmission space $H_q^{(a-1)(2a)}(\comega)$
(cf.\  \eqref{eq:2.14}ff.), which is mapped by $r^+P$ into $L_q(\Omega
)$ by \cite[Theorem~3.5]{G23}. This is a larger space
than $D_q(\comega)=H_q^{a(2a)}(\comega)$, satisfying
\begin{equation}
  \label{eq:6.1a}
H_q^{a(2a)}(\comega)=\{u\in H_q^{(a-1)(2a)}(\comega)\mid \gamma
_0(u/d_0^{a-1})=0\}.
\end{equation}
The problem \eqref{eq:6.2} is Fredholm solvable with $u\in H_q^{(a-1)(2a)}(\comega)$
 for $f,\varphi $ given in
$L_q(\Omega )$ resp.\ $B_{q,q}^{a+1-1/q}(\partial\Omega )$, when $\tau
>2a+1$ \cite[Theorem~5.1]{G23}.

Note that the case $\varphi =0$ in \eqref{eq:6.2} is the homogeneous Dirichlet
problem. There is the notation for the boundary mapping, provided with a
normalizing constant, $$
\gamma _0^{a-1}\colon u\mapsto \Gamma (a+1)\gamma
_0(u/d_0^{a-1}).$$

By \cite[Theorem~2.3]{G23} with $\mu =a-1$, there holds:

\begin{prop}\label{prop:6.2a}
   When $\tau \ge 1$ and $a-1+\frac1q<s<\tau
$ with $s<\tau +a-1$,
the mapping $\gamma _0^{a-1}$ is continuous from
$H_q^{(a-1)(s)}(\comega)$ to $
B_{q,q}^{s-a+1-\frac1q}(\partial\Omega )$ and has a right inverse $K_{(0)}^{a-1}$ that
maps continuously
$$
K_{(0)}^{a-1}\colon B_{q,q}^{s-a+1-\frac1q}(\partial\Omega )\to
H_q^{(a-1)(s)}(\comega).  
$$  
\end{prop}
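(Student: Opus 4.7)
The plan is to reduce to the model halfspace via the local-coordinate definition of $H_q^{(a-1)(s)}(\comega)$, exploit the identity $H_q^{(a-1)(s)}(\crnp)=\Lambda_+^{-(a-1)}e^+\ol H_q^{s-a+1}(\rnp)$ from \eqref{eq:2.14} to rewrite the weighted trace $\gamma_0^{a-1}$ as an ordinary trace, and then glue back via the partition of unity. Since $d_0\circ F_{\zeta_j}^{-1}$ is comparable to $x_n$ near the boundary with a $C^\tau$-smooth ratio in each chart, both the continuity of $\gamma_0^{a-1}$ and the right-inverse property are invariant under the rotation and the diffeomorphism $F_{\zeta_j}$, so it is enough to settle the two claims for $\Omega=\rnp$, $d_0=x_n$.

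On $\rnp$, write $u=\Lambda_+^{-(a-1)}e^+v$ with $v\in\ol H_q^{s-a+1}(\rnp)$. The structural result underlying the $\mu$-transmission theory of \cite{G15} is that a ``plus'' order-reducing operator of negative order $-(a-1)$ applied to $e^+v$ generates exactly the boundary asymptotics $u(x)=c\,x_n^{a-1}\gamma_0 v+R$ near $x_n=0$, with $R$ such that $R/x_n^{a-1}$ has vanishing trace; the normalizing constant $\Gamma(a+1)$ in the definition of $\gamma_0^{a-1}$ is chosen precisely so that $\gamma_0^{a-1}u$ agrees, up to a fixed nonzero factor, with $\gamma_0 v$. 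Since the hypothesis $s>a-1+\tfrac1q$ is equivalent to $s-a+1>\tfrac1q$, the classical trace \eqref{eq:2.3} gives continuity of $\gamma_0\colon\ol H_q^{s-a+1}(\rnp)\to B_{q,q}^{s-a+1-\frac1q}(\R^{n-1})$ together with a continuous right inverse $K_0$. Composing produces continuity of $\gamma_0^{a-1}$ on the halfspace and a right inverse $\Lambda_+^{-(a-1)}e^+K_0$ (times the reciprocal constant), whose range is automatically supported in $\crnp$ because $\Lambda_+^{-(a-1)}$ preserves support there.

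For the global claims I would transfer the halfspace right inverse through each $F_{\zeta_j}$, multiply by the $\varrho_j$ from the partition of unity used in \eqref{eq:2.15}, and sum. The main obstacle I expect is exactly this gluing: one must verify that pullback by the $C^{1+\tau}$-diffeomorphism $F_{\zeta_j}$ preserves $H_q^{(a-1)(s)}(\crnp)$ with controlled norm, and that each commutator $[\varrho_j,\Lambda_+^{-(a-1)}]$ yields only remainders of sufficiently lower order to be absorbed on the target side. This is where the upper bounds $s<\tau$ and $s<\tau+a-1$ enter: $s<\tau$ is the ceiling from the $C^\tau$-symbol mapping property \eqref{eq:2.11}, while $s<\tau+a-1$ is the same bound after accounting for the $a-1$ order gain of $\Lambda_+^{-(a-1)}$ applied to the factor $\ol H_q^{s-a+1}(\rnp)$. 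Once this invariance is in hand, assembling the chart constructions delivers the continuous $K_{(0)}^{a-1}\colon B_{q,q}^{s-a+1-\frac1q}(\partial\Omega)\to H_q^{(a-1)(s)}(\comega)$ with the stated right-inverse property for $\gamma_0^{a-1}$.
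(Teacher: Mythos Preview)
The paper does not prove this proposition at all: the sentence immediately preceding it reads ``By \cite[Theorem~2.3]{G23} with $\mu =a-1$, there holds:'', so the result is simply quoted from an external reference. Your proposal is therefore not a comparison against a proof in the paper but rather a sketch of how the cited result is established.

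That said, your outline is essentially how the proof in \cite{G23} (and, in the smooth case, \cite{G15,G19}) proceeds: one reduces to the halfspace via the local-coordinate definition, uses the identification $H_q^{\mu(s)}(\crnp)=\Xi_+^{-\mu}e^+\ol H_q^{s-\mu}(\rnp)$ to convert the weighted trace $\gamma_0^{a-1}$ into the ordinary trace $\gamma_0$ on the $\ol H_q^{s-\mu}$ factor, invokes \eqref{eq:2.3}, and then transfers back. The nontrivial work in the nonsmooth setting is exactly the coordinate-invariance and commutator analysis you flag, which is carried out in \cite{AG23} and \cite{G23}. One small slip: you call $\Lambda_+^{-(a-1)}$ an operator ``of negative order $-(a-1)$'', but since $0<a<1$ the order $-(a-1)=1-a$ is positive; the relevant structural fact is rather that elements of $H_q^{(a-1)(s)}$ for $s>a-1+\tfrac1q$ admit a leading boundary singularity $d^{a-1}$, and the normalizing constant $\Gamma(a+1)$ (more precisely $\Gamma(\mu+1)$ with $\mu=a-1$, i.e.\ $\Gamma(a)$) makes $\gamma_0^{a-1}u$ coincide with $\gamma_0 v$.
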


In particular,
\begin{equation}
  \label{eq:6.2a}
K^{a-1}_{(0)}\colon B_{q,q}^{a+1-\frac1q}(\partial\Omega )\to
H_q^{(a-1)(2a)}(\comega),\;K^{a-1}_{(0)}\colon B_{q,q}^{\varepsilon
}(\partial\Omega )\to H_q^{(a-1)(a-1+\frac1q+\varepsilon
)}(\comega), 
\end{equation}
for  $\varepsilon >0$ (subject to $s=a-1+\frac1q+\varepsilon <\tau +a-1$).

By Lemma~5.3 in \cite{G23}, $H_q^{(a-1)(s)}(\comega)\subset L_q(\Omega )
$ for $s\ge 0$, when $q<\frac1{1-a}$. We assume this for the nonhomogeneous heat
problem:
\begin{equation}
  \label{eq:6.3}
  \begin{split}
    \partial_tu+Pu&=f\text{ on }\Omega \times I ,\\
     \gamma _0(u/d_0^{a-1})&=\psi \text{ on }\partial\Omega \times I ,\\
u&=0\text{ on }(\R^n\setminus\Omega )\times I, \\
u|_{t=0}&=0.
    \end{split}
\end{equation}

   Here we can show:
   
   \begin{thm}\label{thm:6.2}
          In addition to the assumptions of Theorem~\ref{thm:6.1},
 assume  that $\tau >2a+1$ and  $q<\frac1{1-a}$. Then \eqref{eq:6.3} 
has for  $f\in
L_p(I;L_q(\Omega ))$, $\psi \in L_p(I;B_{q,q}^{a+1-1/q}(\partial\Omega ))\cap  
H_p^1(  I; B_{q,q}^\varepsilon (\partial\Omega ))$ with $\psi
 (x,0)=0$ ($\varepsilon >0$) a unique
solution $u$ satisfying
\begin{equation}
  \label{eq:6.4b}
u\in L_p(I;H_q^{(a-1)(2a)}(\comega ))\cap   H_p^1(I; L_q(\Omega )).
\end{equation}
\end{thm}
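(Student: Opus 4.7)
The strategy is the classical reduction to the homogeneous boundary problem via a lifting of the nonhomogeneous trace, after which one applies Theorem~\ref{thm:6.1}. The key ingredient is the right inverse $K_{(0)}^{a-1}$ of the boundary operator $\gamma_0^{a-1}=\Gamma(a+1)\gamma_0(\cdot/d_0^{a-1})$ described in Proposition~\ref{prop:6.2a} together with the mapping properties \eqref{eq:6.2a}, and the inclusion $H_q^{(a-1)(s)}(\comega)\subset L_q(\Omega )$ for $s\ge 0$ which holds precisely under the assumption $q<1/(1-a)$ (Lemma~5.3 of \cite{G23}).

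First I would define the lifting $w(t)=K_{(0)}^{a-1}(\Gamma(a+1)\psi(t))$. By construction $\gamma_0(w/d_0^{a-1})=\psi $. The first half of \eqref{eq:6.2a}, applied pointwise in $t$, yields
\[
w\in L_p(I;H_q^{(a-1)(2a)}(\comega)),
\]
and the second half, applied to $\partial_t\psi \in L_p(I;B_{q,q}^{\varepsilon }(\partial\Omega ))$, gives
\[
\partial_tw\in L_p(I;H_q^{(a-1)(a-1+\tfrac1q+\varepsilon )}(\comega))\subset L_p(I;L_q(\Omega )),
\]
using $q<1/(1-a)$. Hence $w\in L_p(I;H_q^{(a-1)(2a)}(\comega))\cap H^1_p(I;L_q(\Omega ))$, and the embedding $H^1_p(I;L_q(\Omega ))\hookrightarrow C(\ol I;L_q(\Omega ))$ together with $\psi (\cdot,0)=0$ yields $w|_{t=0}=0$.

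Next I would set $v=u-w$ and translate \eqref{eq:6.3} into a homogeneous Dirichlet problem for $v$ with forcing
\[
F:=f-\partial_tw-Pw.
\]
Here $f\in L_p(I;L_q(\Omega ))$ by hypothesis, $\partial_tw\in L_p(I;L_q(\Omega ))$ from the previous paragraph, and $Pw\in L_p(I;L_q(\Omega ))$ because $r^+P$ maps $H_q^{(a-1)(2a)}(\comega)$ continuously into $L_q(\Omega )$ by \cite[Theorem~3.5]{G23}. Thus $F\in L_p(I;L_q(\Omega ))$. Since $\gamma _0(v/d_0^{a-1})=0$, the relation \eqref{eq:6.1a} forces $v$ to live in the zero-trace subspace $H_q^{a(2a)}(\comega)=D_q(\comega)$ at each time, and $v|_{t=0}=0$.

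Then I would invoke Theorem~\ref{thm:6.1} applied to $F\in L_p(I;L_q(\Omega ))$, producing a unique $v\in L_p(I;D_q(\comega))\cap H^1_p(I;L_q(\Omega ))$. Setting $u=v+w$ gives a solution of \eqref{eq:6.3} in the class \eqref{eq:6.4b}, since both $v$ and $w$ belong to that class (note $D_q(\comega)\subset H_q^{(a-1)(2a)}(\comega)$). Uniqueness follows because the difference of any two solutions has vanishing $\gamma_0(\cdot/d_0^{a-1})$, hence lies in $L_p(I;D_q(\comega))\cap H^1_p(I;L_q(\Omega ))$ and solves the homogeneous heat problem with zero forcing and zero initial datum, so it vanishes by Theorem~\ref{thm:6.1}. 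The main technical point — and the only place where the extra hypotheses $\tau >2a+1$ and $q<1/(1-a)$ are genuinely used — is the verification that the lifting $w$ has $\partial_tw\in L_p(I;L_q(\Omega ))$; this is why the auxiliary regularity $\psi \in H^1_p(I;B_{q,q}^\varepsilon (\partial\Omega ))$ is imposed and why $q$ must stay below $(1-a)^{-1}$.
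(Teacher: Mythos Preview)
Your proposal is correct and follows essentially the same approach as the paper: lift the boundary datum via $K_{(0)}^{a-1}$, check the lifting lies in the right space using \eqref{eq:6.2a} and the inclusion $H_q^{(a-1)(s)}(\comega)\subset L_q(\Omega)$ for $q<(1-a)^{-1}$, then reduce to Theorem~\ref{thm:6.1}. The only difference is notational (your $w$ and $v$ are the paper's $v$ and $w$), and you are in fact slightly more careful than the paper about the normalizing factor $\Gamma(a+1)$ needed to match the boundary condition $\gamma_0(u/d_0^{a-1})=\psi$ with the operator $\gamma_0^{a-1}$.
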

 
\begin{proof} Considering the boundary mapping and its right inverse as
constant in $t$, we can add a time-parameter $t$, and have in view of
Propostion~\ref{prop:6.2a} and \eqref{eq:6.2a} for any $p\in (1,\infty )$ that with $I=(0,T)$,
\begin{align*}
\gamma _0^{a-1}&\colon L_p(I; H_q^{(a-1)(2a)}(\comega))\to L_p(I; B_{q,q}^{a+1-\frac1q}(\partial\Omega )),\\
\gamma _0^{a-1}&\colon   H^1_p(I; H_q^{(a-1)(a-1+\frac1q+\varepsilon
)}(\comega))\to
  H^1_p(I; B_{q,q}^{\varepsilon }(\partial\Omega )),
\end{align*}
with right inverses $K^{a-1}_{(0)}$ continuous in the opposite direction.

For the given $\psi$ as in the assumptions, let $v(x,t)=K^{a-1}_{(0)}\psi (x,t)$; it
lies in $L_p(I;H_q^{(a-1)(2a)}(\comega ))$ and in $   H^1_p(I; L_q(\Omega ))$ (since
$H_q^{(a-1)(a-1+\frac1q +\varepsilon )}(\comega )\subset L_q(\Omega )$), 
and satisfies 
$$
\gamma _0^{a-1}v=\psi ,\quad v|_{t=0}=0,\quad r^+Pv \in L_p(I;
L_q(\Omega )),\quad \partial_tv\in L_p(I;L_q(\Omega )).
$$
Then $w=u-v$ is in $L_p(I; H_q^{(a-1)(2a)}(\comega))$ with $\gamma
_0^{a-1}w=0$, hence in $L_p(I; H_q^{a(2a)}(\comega))$ by \eqref{eq:6.1a}.
Moreover, $(r^+P+\partial_t)(u-v) \in L_p(I;L_q(\Omega ))$.
Thus in order for $u$ to solve \eqref{eq:6.3}, $w$ must 
solve a problem \eqref{eq:1.1} with homogeneous boundary
condition and $f$ replaced by $f-(r^+P+\partial_t)v$. Here Theorem 6.1 assures that there is a unique
solution $w\in L_p(I;H_q^{a(2a)}(\comega ))\cap   H_p^1(I; L_q(\Omega
))$. Then $u=v+w$ is the unique solution of \eqref{eq:6.3}, satisfying \eqref{eq:6.4b}.  
\end{proof}

Let us also mention that one can  use the resolvent estimates
(just in uniform norms) to show results for other function spaces. For
example,  by a strategy of Amann \cite{A97}:

\begin{thm}\label{thm:6.3}
  Assumptions as in Theorem~\ref{thm:6.1}.  Let
   $s$ be noninteger $>0$. For any $f\in  \dot C^s
   (\overline{\R}_+ ;L_q(\Omega ))$ there is a unique solution
 $u\in \dot C^s (\overline{\R}_+
     ;D_q(\overline\Omega))$, and there holds
     \begin{equation}
       \label{eq:6.4}
f\in \dot C^s(\overline{\R}_+ ;L_q(\Omega ))\iff 
u\in \dot C^s(\overline{\R}_+
;D_q(\overline\Omega))\cap \dot C^{s+1} (\overline{\R}_+ ;L_q(\Omega )).
     \end{equation}
\end{thm}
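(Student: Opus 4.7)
The plan is to read off the theorem from the uniform sectorial resolvent estimate in Theorem~\ref{thm:5.8} via Amann's abstract framework \cite{A97} for maximal regularity in vector-valued H\"older spaces. Dropping the $\mathcal R$-bound and keeping only the operator-norm bound $\|\lambda(P_D-\lambda)^{-1}\|_{\mathcal L(L_q(\Omega))}\le C$ for $\lambda\in V_{\delta,K}$ shows that, after a shift $P_D\rightsquigarrow P_D+k$ with $k>K$ large enough that the spectrum of the shifted operator sits strictly to the right of the imaginary axis, $-P_D$ generates a bounded analytic semigroup $\{e^{-tP_D}\}_{t\ge 0}$ on $L_q(\Omega)$ obeying the standard decay
\begin{equation*}
\|P_D e^{-tP_D}\|_{\mathcal L(L_q(\Omega))} \le Ct^{-1}, \qquad \|e^{-tP_D}\|_{\mathcal L(L_q(\Omega),\,D_q(\comega))}\le Ct^{-1},\quad t\in(0,1].
\end{equation*}
These are exactly the ingredients Amann's theory needs as input.

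For existence I would represent the solution via Duhamel as $u(t)=\int_0^t e^{-(t-\sigma)P_D}f(\sigma)\,d\sigma$, and then, using the identity $\int_0^t P_D e^{-(t-\sigma)P_D}\,d\sigma=I-e^{-tP_D}$, rewrite
\begin{equation*}
P_D u(t)=\int_0^t P_D e^{-(t-\sigma)P_D}\bigl(f(\sigma)-f(t)\bigr)\,d\sigma+(I-e^{-tP_D})f(t).
\end{equation*}
The semigroup estimate $\|P_De^{-\tau P_D}\|\le C\tau^{-1}$ combined with the H\"older bound $\|f(\sigma)-f(t)\|_{L_q(\Omega)}\le[f]_{\dot C^s}|t-\sigma|^s$ makes the integral absolutely convergent, and by standard manipulations one obtains the seminorm estimate $[P_Du]_{\dot C^s(\overline{\R}_+;L_q(\Omega))}\le C[f]_{\dot C^s(\overline{\R}_+;L_q(\Omega))}$; the noninteger hypothesis on $s$ is precisely what averts a logarithmic divergence at the endpoint. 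Since $\partial_tu=f-P_Du$, the H\"older regularity of $\partial_tu$ follows for free, and $u$ lies in the claimed intersection.

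The opposite direction in \eqref{eq:6.4} is immediate: if $u\in\dot C^s(\overline{\R}_+;D_q(\comega))\cap\dot C^{s+1}(\overline{\R}_+;L_q(\Omega))$ then $f=\partial_tu+P_Du$ is automatically in $\dot C^s(\overline{\R}_+;L_q(\Omega))$, because $P_D\colon D_q(\comega)\to L_q(\Omega)$ is bounded. Uniqueness follows by applying the existence part to the homogeneous equation. The main obstacle will be the careful bookkeeping of the $\dot C^s$-spaces at $t=0$ and the transfer of the result from $P_D+k$ back to $P_D$: multiplication by $e^{\pm kt}$ preserves $\dot C^s(\overline{I};X)$ on any bounded interval, so the conclusion on $\overline{I}=\overline{\R}_+$ is obtained by localization in $t$ from the corresponding statement for the shifted generator, for which Amann's assumptions are satisfied in their clean form.
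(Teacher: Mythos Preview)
Your proposal is correct and takes essentially the same approach as the paper, whose proof is the single line that the argument ``goes exactly as in \cite[Theorem~5.14]{G18b}'' using Amann's strategy \cite{A97}; you have simply unpacked the mechanics (uniform resolvent bound from Theorem~\ref{thm:5.8} $\Rightarrow$ analytic semigroup $\Rightarrow$ H\"older maximal regularity via the Duhamel splitting of $P_Du$). The only point worth tightening is your last paragraph on transferring from $P_D+k$ back to $P_D$ on $\overline{\R}_+$, since multiplication by $e^{kt}$ does not preserve global $\dot C^s(\overline{\R}_+;X)$-bounds and ``localization in $t$'' alone will not yield a uniform estimate; this is bookkeeping rather than a gap in the method, and the clean fix is to argue that $-P_D$ itself already generates a bounded analytic semigroup (the spectrum lies in a right-opening sector by the discussion around \eqref{eq:3.7}, and the estimate on $V_{\delta,K_0}$ extends to small $\lambda$ by discreteness of the spectrum), so that no shift is needed.
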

 \begin{proof} The proof goes exactly as in
   \cite[Theorem~5.14]{G18b}. The notation $\dot C^s(\overline{\R}_+
   ;X)$ indicates the functions in $ C^s({\R} ;X)$ vanishing for $t<0$. 
\end{proof}

As in Remark~\ref{rem:5.9} we observe that the evenness of the symbol $p(x,\xi
)$ is only needed in a small neighborhood of the boundary.

\section{Applications to nonlinear evolution equations} 

In this last section we present an application of the result on
maximal regularity established in Theorem~\ref{thm:6.1} to existence of strong solutions of the nonlinear nonlocal parabolic equation
\begin{equation}\label{eq:NonEq}
  \begin{aligned}
  \partial_tu+ a_0(x,u)Pu &= f(x,u)&\quad& \text{in }\Omega\times (0,T),\\
  u&=0 &\quad&\text{on } (\R^n\setminus \Omega)\times (0,T),\\
  u|_{t=0} &= u_1 &\quad& \text{in }\Omega,
  \end{aligned}
\end{equation}
for some $T>0$. 
\begin{thm}\label{thm:Nonlinear}
   Let $\Omega $ be a bounded domain with  $C^{1+\tau
   }$-boundary for some $\tau >2a$, and let $1<p,q<\infty $ be such that
   \begin{equation}
     \label{eq:Ineq1}
     (a+\tfrac1q)(1-\tfrac{1}p) -\tfrac{n}q >0.
   \end{equation}
If $n=1$, assume moreover $\frac1q<a$. Let
$P$ satisfy Hypothesis~\ref{hypo:3.1}, and
assume that the principal symbol $p_0(x,\xi )$ is real positive at
each boundary point   $x\in \partial\Omega$. Moreover, for an open set 
$U\subset \R$  with $0\in U$,  let $a_0\in
C^{\max(1,\tau)}(\R^n\times U,\R)$ with $a_0(x,s)>0$ for all $s\in U$ and
$x\in\R^n$, let $f\colon \Rn\times U\to \R\colon (x,u)\mapsto f(x,u)$ be continuous and locally Lipschitz with respect to $u\in U$, and let
$u_0 \in (L_q(\Omega),D_q(\overline\Omega))_{1-\frac1p,p}\cap
C^\tau(\overline{\Omega}) $ with $\overline{u_0(\Omega)}\subset
U$. Then there are $\eps_0,T>0$ such that for every
$u_1\in
X_{\gamma,1}:=(L_q(\Omega),D_q(\overline\Omega))_{1-\frac1p,p}$ with
$\|u_0-u_1\|_{X_\gamma,1}\leq \eps_0$, the system
\eqref{eq:NonEq} possesses a unique solution
\begin{equation*}
  u\in L_p((0,T);D_q(\overline{\Omega}))\cap H^1_p((0,T);L_q(\Omega)).
\end{equation*}
\end{thm}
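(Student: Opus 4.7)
My plan is to apply the standard quasilinear maximal-regularity strategy (cf.\ Pr\"uss--Simonett \cite[Ch.\ 5]{PS16}) around the linear result Theorem~\ref{thm:6.1}. Set $X_0=L_q(\Omega)$, $X_1=D_q(\comega)$, and $\mathbb{E}_T:=L_p((0,T);X_1)\cap H^1_p((0,T);X_0)$; the standard parabolic trace theorem gives $\mathbb{E}_T\hookrightarrow BUC([0,T];X_{\gamma,1})$. The first observation is that hypothesis \eqref{eq:Ineq1} (together with $1/q<a$ when $n=1$) forces $a>1/q$ in all cases, so by \eqref{eq:3.5a}, $X_1\hookrightarrow \dot H_q^{a+1/q-\eta}(\comega)$ for every $\eta>0$. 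Real interpolation with $L_q$ then gives $X_{\gamma,1}\hookrightarrow B_{q,p}^{(a+1/q)(1-1/p)-\eta}(\Omega)$, and Sobolev embedding places the right side inside $C^\sigma(\comega)$ with $\sigma=(a+\tfrac1q)(1-\tfrac1p)-\tfrac{n}q-\eta>0$ for $\eta$ small. Consequently $\mathbb{E}_T\hookrightarrow C([0,T];C^\sigma(\comega))$, which makes the pointwise substitutions $u\mapsto a_0(\cdot,u)$ and $u\mapsto f(\cdot,u)$ meaningful whenever $u$ stays in $U$.

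Next I freeze the principal coefficient at $u_0$: the function $\tilde{a}(x):=a_0(x,u_0(x))$ lies in $C^\tau(\comega)$ (since $a_0\in C^{\max(1,\tau)}$ and $u_0\in C^\tau$) and is strictly positive, so admits a positive $C^\tau$-extension to $\rn$. The symbol $\tilde p(x,\xi):=\tilde a(x)p(x,\xi)$ then lies in $C^\tau S^{2a}(\rn\times\rn)$, is classical, strongly elliptic (as $\tilde a>0$), satisfies the evenness property \eqref{eq:3.1} (multiplication by an $\xi$-independent factor preserves \eqref{eq:3.1}) and has real positive principal part at boundary points. Hence $A_\ast:=\tilde aP=\Op(\tilde p)$ falls under Hypothesis~\ref{hypo:3.1}, and Theorem~\ref{thm:6.1} yields maximal $L_p$-regularity of the linearized problem $\partial_tv+A_\ast v=g$ with $v|_{\rn\setminus\Omega}=0$ on every bounded interval. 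The trace characterization of $X_{\gamma,1}$ handles nonhomogeneous initial data: for every $u_1\in X_{\gamma,1}$ and $g\in L_p(I;X_0)$, the Cauchy problem has a unique solution in $\mathbb{E}_T$ with estimates uniform for $T$ in any bounded range.

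Finally, let $v_\ast\in\mathbb{E}_T$ solve $\partial_tv_\ast+A_\ast v_\ast=f(\cdot,u_0)$ with $v_\ast|_{t=0}=u_0$, and for $r,T>0$ consider the closed set
\[
\Sigma_{T,r}:=\{u\in\mathbb{E}_T:u|_{t=0}=u_1,\;\|u-v_\ast\|_{\mathbb{E}_T}\le r\}.
\]
Define $\Phi(u):=v$ by solving $\partial_tv+A_\ast v=(\tilde a-a_0(\cdot,u))Pu+f(\cdot,u)$, $v|_{t=0}=u_1$, $v|_{\rn\setminus\Omega}=0$. By the embedding $\mathbb{E}_T\hookrightarrow C([0,T];C^\sigma(\comega))$, for $\eps_0,T,r$ small every $u\in\Sigma_{T,r}$ takes values in a fixed compact subset of $U$; combining the local Lipschitz continuity of $a_0$ and $f$, the bound $\|Pu\|_{L_p(I;X_0)}\le C\|u\|_{\mathbb{E}_T}$, and H\"older's inequality in $t$, one obtains $\|\Phi(u_1)-\Phi(u_2)\|_{\mathbb{E}_T}\le \kappa(T,r,\eps_0)\|u_1-u_2\|_{\mathbb{E}_T}$ with $\kappa\to 0$ as $T,r,\eps_0\to 0$. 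The maximal-regularity estimate for $A_\ast$ then turns $\Phi$ into a self-map and a contraction on $\Sigma_{T,r}$, and its unique fixed point is the desired solution. The main obstacle is precisely the multiplicative term $(\tilde a-a_0(\cdot,u))Pu$: closing the estimate in $L_p(I;L_q)$ forces one to control $\|a_0(\cdot,u)-\tilde a\|_{L^\infty}$ by a quantity tending to zero as $u\to u_0$ in $\mathbb{E}_T$, and this is exactly what \eqref{eq:Ineq1} secures via the Sobolev embedding into $C^\sigma(\comega)$; without it the quasilinear structure cannot be closed within the maximal-regularity space.
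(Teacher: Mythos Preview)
Your proof is correct and follows essentially the same approach as the paper: both establish the embedding $X_{\gamma,1}\hookrightarrow C^0(\comega)$ from \eqref{eq:Ineq1} and the inclusion $D_q(\comega)\hookrightarrow \dot H_q^{a+1/q-\eta}(\comega)$, freeze the coefficient at $u_0$ to obtain an operator $\tilde a(x)P=\Op(\tilde a\,p)$ satisfying Hypothesis~\ref{hypo:3.1}, and invoke Theorem~\ref{thm:6.1} for its maximal $L_p$-regularity. The only difference is cosmetic: the paper cites the abstract quasilinear local-existence theorem of K\"ohne--Pr\"uss--Wilke \cite[Theorem~2.1]{KPW10} (equivalently \cite[Theorem~5.1.1]{PS16}) as a black box, whereas you unpack its proof by writing down the fixed-point map $\Phi$ explicitly.
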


\begin{proof}
  We prove the result by applying a local existence result for an
  abstract evolution equation by K\"ohne et
  al.~\cite[Theorem~2.1]{KPW10}, which can also be found in
  \cite[Theorem~5.1.1]{PS16}. Alternatively, one could also use a
  result by Cl\'ement and Li~\cite[Theorem~2.1]{CL93}. To this end we
  choose $X_0 = L_q(\Omega)$, $X_1=D_q(\overline\Omega)$. Note that
  \eqref{eq:Ineq1} implies $\frac 1q<a$ when $n\ge 2$, so that
  $D_q(\overline\Omega)\hookrightarrow
  \dot{H}_q^{a+\frac1q-\eps}(\comega)$ by \eqref{eq:3.5a} for all
  $n\ge 1$. Here $\dot{H}_q^{a+\frac1q-\eps}(\comega)\hookrightarrow
  \ol{H}_q^{a+\frac1q-\eps}(\Omega)$.
  
  Then in the notation of \cite{KPW10} (with $\mu=1$)
  \begin{align}   \nonumber
    X_{\gamma,1}&:=(L_q(\Omega),D_q(\overline\Omega))_{1-\frac1p,p}\\\label{eq:Embedding}
    &\hookrightarrow (L_q(\Omega),\ol{H}_q^{a+\frac1q-\eps}(\Omega))_{1-\frac1p,p}= \ol{B}_{q,p}^{(a+\frac1q-\eps)(1-\frac1p)}(\Omega) 
    \hookrightarrow C^0(\ol{\Omega})  
  \end{align}
  for $\eps>0$ sufficiently small, in view of \eqref{eq:Ineq1}.
 Moreover, let
  \begin{equation*}
     V_1:= \{u \in X_{\gamma,1}\mid  u(x) \in U \text{ for all }x\in \ol{\Omega}\}.
  \end{equation*}
  Then $V_1\subset X_{\gamma,1}$ is open due to \eqref{eq:Embedding}
  and the fact that $U\subset \R$ is open. Moreover, since $a_0, f\colon U\to \R$ are locally Lipschitz continuous, we have that
  $$
  u\mapsto a_0(\cdot,u(\cdot) ), u\mapsto f(\cdot, u(\cdot)) \in C^{0,1} (V_1, C^0(\overline\Omega)).
  $$ Now we define $A\colon V_1\to \mathcal{L}(X_1,X_0)$ and $F\colon V_1  \to X_0$ by
  \begin{equation*}
    A(u)= a_0(\cdot, u(\cdot)) P,\quad F(u)= f(\cdot,u(\cdot)) \qquad \text{for all }u\in V_1.
  \end{equation*}
  Because of $P\in \mathcal{L}(D_q(\overline{\Omega}),L_q(\Omega))$, this yields
  \begin{align*}
    A\in C^{0,1} (V_1, \mathcal{L}(X_1,X_0)),\ F\in C^{0,1} (V_1, X_0). 
  \end{align*}
  Finally, we note that, since $u_0 \in X_{\gamma,1} \cap C^\tau(\overline{\Omega})$, we have $a_0(\cdot, u_0(\cdot)) \in C^\tau(\overline{\Omega})$. Thus $A(u_0)=\Op(\tilde{p})$,
with $\tilde{p}(x,\xi)= a_0(x,u_0(x))p(x,\xi)$ for all $x,\xi\in\R^n$, satisfies again Hypothesis~\ref{hypo:3.1}. Therefore $A(u_0)$ has maximal $L_p$-regularity on every finite time interval $I=(0,T)$, $0<T<\infty$ due to Theorem~\ref{thm:6.1}. Hence all assumptions of \cite[Theorem~2.1]{KPW10} with $\mu=1$ are satisfied. This yields the statement of the theorem.
\end{proof}
\begin{rem}\label{rem:Uniqueness}
  Actually, the uniqueness statement in Theorem~\ref{thm:Nonlinear} holds in a slightly stronger local sense: If $u,\tilde{u}\in L_p((0,T');D_q(\overline{\Omega}))\cap H^1_p((0,T');L_q(\Omega))$ are solutions of \eqref{eq:NonEq} with $(0,T)$ replaced by $(0,T')$ for some $T'\in (0,T]$ and initial value as before, then $u\equiv \tilde{u}$. This follows immediately from the proof of \cite[Theorem~2.1]{KPW10}, which is based on the contraction mapping principle and uses that $T$ is sufficiently small.
\end{rem}

Finally, we apply the previous result to a fractional nonlinear diffusion equation with a nonzero exterior condition, of the form
\begin{equation}\label{eq:NonlinearDiffusion}
  \begin{aligned}
  \partial_tw+ P\varphi(w) &= 0&\quad &\text{in }\Omega\times (0,T),\\
  w&=w_b &&\text{on } (\R^n\setminus \Omega)\times (0,T),\\
  w|_{t=0} &= w_1 &&\text{in }\Omega,
  \end{aligned}
\end{equation}
for some function
$\varphi \in  {C^1(\overline{\rp},\R)\cap} C^2(\rp,\R)$ with  {$\varphi(0)=0$} and
$\varphi'(s)>0$ for all $s\in \rp$.
\begin{cor}\label{cor:Diffusion}
   Let $\Omega $ be bounded with  $C^{1+\tau
}$-boundary for some $\tau >2a$, and let $1<p,q<\infty $ be such that
\eqref{eq:Ineq1} holds, assuming also $\frac1q<a$ if
$n=1$. Let
$P$ satisfy Hypothesis~\ref{hypo:3.1}, and
assume that the principal symbol $p_0(x,\xi )$ is real positive at
each boundary point   $x\in \partial\Omega$, and that $P$ maps real
functions to real functions.
Moreover, let
$\varphi \in C^2(\rp)$ be real with  $\varphi'(s)>0$ for all
$s\in\rp$, let $w_b\in {H}^{2a}_q(\Rn)\cap C^\tau(\Rn) $ be
real with  {$\inf_{x\in\overline{\Omega}}w_b(x)>0$}, and let
$w_0\colon \ol{\Omega}\to \rp$ be such that
$\varphi(w_0)-\varphi(w_b)\in
(L_q(\Omega),D_q(\overline\Omega))_{1-\frac1p,p}\cap
C^\tau(\overline{\Omega})$. Then there is some $\eps_0>0$ such that
for every $w_1\colon \Omega\to \rp$ with
$\varphi(w_1)-\varphi(w_0)\in
X_{\gamma,1}$ (cf.\ \eqref{eq:Embedding})
and
$\|\varphi(w_0)-\varphi(w_1)\|_{X_{\gamma,1}}\leq \eps_0$, the system
\eqref{eq:NonlinearDiffusion}
possesses a unique solution $w\in \bigcap_{0\leq s
  <\frac1q}L_p((0,T);\ol{H}^{a+s}_q(\Omega))\cap H^1_p((0,T) ;L_q(\Omega))$ with $\varphi(w)-\varphi(w_b)\in L_p((0,T);D_q(\overline{\Omega}))$ for some $T>0$.
\end{cor}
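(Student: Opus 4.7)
The plan is to reduce \eqref{eq:NonlinearDiffusion} to a quasilinear problem of the form \eqref{eq:NonEq} via the Kirchhoff-type substitution $u := \varphi(w) - \varphi(w_b)$, then apply Theorem~\ref{thm:Nonlinear} to produce $u$, and finally recover $w = \varphi^{-1}(u + \varphi(w_b))$. Since $\varphi\in C^2(\rp)$ with $\varphi'>0$ and $\inf_{\R^n}w_b>0$, $\varphi$ is a $C^2$-diffeomorphism of $\rp$ onto $\varphi(\rp)$, so $\varphi^{-1}$ is $C^2$ on a relative neighborhood of the range of $u+\varphi(w_b)$. Using $\partial_t u = \varphi'(w)\partial_t w$, $Pu = P\varphi(w) - P\varphi(w_b)$ and $\partial_t w = -P\varphi(w)$, a direct computation yields
\begin{equation*}
  \partial_t u + a_0(x,u)\,Pu = f(x,u) \quad\text{in }\Omega\times(0,T),
\end{equation*}
with $a_0(x,s) := \varphi'(\varphi^{-1}(s+\varphi(w_b(x))))$ and $f(x,s) := -a_0(x,s)(P\varphi(w_b))(x)$. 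The exterior condition $w=w_b$ becomes $u=0$ on $(\R^n\setminus\Omega)\times(0,T)$, and the initial data $u_0 := \varphi(w_0)-\varphi(w_b)$, $u_1 := \varphi(w_1)-\varphi(w_b)$ satisfy $\|u_0-u_1\|_{X_{\gamma,1}} = \|\varphi(w_0)-\varphi(w_1)\|_{X_{\gamma,1}}\le \eps_0$.

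Next I would verify the hypotheses of Theorem~\ref{thm:Nonlinear}. Choosing $U\subset\R$ an open neighborhood of $0$ small enough that $s+\varphi(w_b(x))\in\varphi(\rp)$ stays in a compact subset for all $x\in\R^n$, $s\in U$ (possible since $\inf w_b>0$ and $w_b\in C^\tau\subset L^\infty$), the functions $a_0$ and $f$ are well-defined on $\R^n\times U$ with $a_0>0$. Composition with the smooth functions $\varphi'$, $\varphi^{-1}$ on the relevant compact set and the regularity $w_b\in C^\tau(\R^n)$ yield $a_0\in C^{\max(1,\tau)}(\R^n\times U,\R)$, while $f$ is continuous and locally Lipschitz in $u$. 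A Moser-type composition estimate (using $\varphi\in C^2$, $2a<2$ and boundedness of $w_b$) gives $\varphi(w_b)\in H_q^{2a}(\R^n)\cap C^\tau(\R^n)$, so that $P\varphi(w_b)\in L_q(\Omega)$ and consequently $f(\cdot,u)\in L_q(\Omega)$ for each admissible $u$. Since $u_0\in X_{\gamma,1}\cap C^\tau(\comega)$ is given, Theorem~\ref{thm:Nonlinear} applies and produces, for some $T>0$, a unique
\begin{equation*}
  u\in L_p((0,T);D_q(\comega))\cap H_p^1((0,T);L_q(\Omega))
\end{equation*}
solving the transformed problem with $u|_{t=0}=u_1$.

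Finally, I would set $w := \varphi^{-1}(u + \varphi(w_b))$: by construction $w|_{t=0}=w_1$, $w=w_b$ on $\R^n\setminus\Omega$, and $w$ solves \eqref{eq:NonlinearDiffusion}. The identity $\partial_t w = \partial_t u/\varphi'(w)$, together with the uniform lower bound on $\varphi'(w)$ coming from the fact that $w$ stays in a compact subset of $\rp$ (by \eqref{eq:Embedding} applied to $u$), shows $\partial_t w\in L_p((0,T);L_q(\Omega))$. For the claim $w\in L_p((0,T);\ol H_q^{a+s}(\Omega))$, $0\le s<\tfrac1q$, I would combine the embedding $D_q(\comega)\subset \ol H_q^{a+\frac1q-\eps}(\Omega)$ from \eqref{eq:3.5a} with a composition estimate for $\varphi^{-1}$ applied to $u+\varphi(w_b)\in L_p((0,T);\ol H_q^{a+\frac1q-\eps}(\Omega))$. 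The uniqueness statement transfers from Theorem~\ref{thm:Nonlinear} via the bijective pointwise correspondence $w\leftrightarrow u$. The main obstacle is this last composition step: although $u$ incorporates the $d^a$-type singular boundary behavior encoded in $D_q(\comega)$, the function $\varphi(w_b)$ has a generically nonzero boundary trace, so $\varphi(w)=u+\varphi(w_b)$ cannot belong to $D_q(\comega)$. This is precisely why the stated regularity for $w$ is limited to $\ol H_q^{a+s}(\Omega)$ with $s<\tfrac1q$ rather than inheriting the full $D_q$-regularity of $u$, and tracking the regularity through $\varphi^{-1}$ in this mixed setting—where one factor is supported in $\comega$ with fractional boundary behavior and the other is a globally smooth background—is the delicate point.
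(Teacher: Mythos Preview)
Your overall approach---the substitution $u=\varphi(w)-\varphi(w_b)$, the identification of $a_0(x,s)=\varphi'(\varphi^{-1}(s+\varphi(w_b(x))))$ and $f(x,s)=-a_0(x,s)\,(P\varphi(w_b))(x)$, the appeal to Theorem~\ref{thm:Nonlinear}, and the recovery $w=\varphi^{-1}(u+\varphi(w_b))$---is exactly the paper's strategy, and your discussion of why $w$ only inherits $\ol H_q^{a+s}(\Omega)$-regularity for $s<\tfrac1q$ is correct. Two points, however, are genuine gaps.

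First, taking $U$ to be a \emph{small} neighborhood of $0$ is incompatible with the hypothesis $\overline{u_0(\Omega)}\subset U$ in Theorem~\ref{thm:Nonlinear}: the range of $u_0=\varphi(w_0)-\varphi(w_b)$ is in general not small. The paper circumvents this by first extending $\varphi$ to a function $\tilde\varphi\in C^2(\R,\R)$ with $\tilde\varphi'>0$ everywhere that agrees with $\varphi$ on $[\delta/2,\infty)$, where $\delta:=\min\{\inf_{\overline\Omega}w_0,\inf_{\R^n}w_b\}>0$; then $a_0$ and $f$ are defined via $\tilde\varphi$ with $U=\R$, and Theorem~\ref{thm:Nonlinear} applies without any constraint on the range of $u_0$. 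After obtaining $u$ and setting $w=\tilde\varphi^{-1}(u+\varphi(w_b))$, one uses the embedding into $C^0([0,T]\times\overline\Omega)$ to shrink $T$ so that $w>\delta/2$, whence $\tilde\varphi(w)=\varphi(w)$ and $w$ solves \eqref{eq:NonlinearDiffusion}.

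Second, uniqueness does not follow by the pointwise bijection alone. For a second solution $\tilde w$ in the stated class, $\tilde u:=\varphi(\tilde w)-\varphi(w_b)$ lies in the right space, but it satisfies the transformed equation \eqref{eq:NonEq} only where $\tilde w>\delta/2$ (so that $\tilde\varphi^{-1}(\varphi(\tilde w))=\tilde w$ and $a_0(\cdot,\tilde u)=\varphi'(\tilde w)$), and this is not known a priori on all of $[0,T]$. The paper runs a continuation argument: set $t_0=\sup\{T'\in[0,T]:w|_{[0,T']}=\tilde w|_{[0,T']}\}$ and assume $t_0<T$; by continuity $\tilde w(\cdot,t_0)=w(\cdot,t_0)>\delta/2$, so $\tilde w>\delta/2$ on some $[t_0,T']$ with $T'>t_0$, hence $\tilde u|_{[0,T']}$ solves \eqref{eq:NonEq} on $[0,T']$, and the local uniqueness from Remark~\ref{rem:Uniqueness} gives $\tilde u=u$ on $[0,T']$, contradicting the definition of $t_0$.
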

\begin{proof}
  We use a reformulation of
  \eqref{eq:NonlinearDiffusion} in the
  form \eqref{eq:NonEq}. First of all, in view of
  \eqref{eq:Ineq1}ff.,
    we have $w_0 \in C^0(\ol{\Omega})$ and therefore
  \begin{equation*}
  \delta:= \min\big\{ \inf_{x\in\ol\Omega} w_0(x), \inf_{x\in {\overline{\Omega}}}w_b(x)\big\}>0.  
  \end{equation*}
  Hence there is some $\tilde{\varphi}\in C^2(\R,\R)$ with $\tilde{\varphi}'(s)>0$ for all $s\in\R$ and $\tilde{\varphi}(s)=\varphi(s)$ for all $s\geq \frac{\delta}2$.  {Furthermore, we choose some $\tilde{w}_b\in C^\tau(\R^n)$ such that $\tilde{w}_b|_{\Omega}= w_b|_{\Omega}$ and $\inf_{x\in \R^n}\tilde{w}_b(x)>0$.} Moreover, we define
  \begin{alignat*}{2}
    a_0(x,s)&= \tilde\varphi'(\tilde{\varphi}^{-1}(s+\varphi( {\tilde{w}_b}(x))))&\quad& \text{for all }s\in U:=\R, x\in\R^n,\\
    f(x,s)&=-a_0(x,s)P(\varphi(w_b(x)))(x)&\quad&\text{for all }s\in\R, x\in \Omega,
  \end{alignat*}
  and $u_0:=
  \tilde{\varphi}(w_0)-\tilde{\varphi}(w_b)=\varphi(w_0)-\varphi(w_b)$. Hence
  we can apply Theorem~\ref{thm:Nonlinear} and get the existence of
  some $\eps_0>0$ and $T>0$ such that for every $w_1\colon \Omega\to
  \rp$ with $\tilde\varphi(w_1)-\varphi(w_0)\in X_{\gamma,1}$ (cf.\
  \eqref{eq:Embedding})
    and $\|\tilde{\varphi}(w_0)-\varphi(w_1)\|_{X_{\gamma,1}}\leq \eps_0$ there is a unique solution $u\in L_p((0,T);D_q(\overline{\Omega}))\cap H^1_p((0,T);L_q(\Omega))$ of \eqref{eq:NonEq}.
  Moreover, by choosing $\eps_0>0$ sufficiently small, we can achieve
  that $\|\tilde{\varphi}(w_0)-\varphi(w_1)\|_{X_{\gamma,1}}\leq
  \eps_0$ implies $\|w_0-w_1\|_{C^0(\ol{\Omega})}<\delta/2$, since
  $\tilde{\varphi}\colon \R\to \R$ is strictly monotone. Hence
  $\inf_{x\in \ol\Omega}w_0>\delta/2$ and $\tilde{\varphi}(w_0)=
  \varphi(w_0)$ in that case. Now let us define $w:=
  \tilde{\varphi}^{-1}(u+\varphi(w_b))$.

  Then $w\in  L_p((0,T);\ol{H}^{a+\frac1q-\eps}_q(\Omega))\cap H^1_p((0,T);L_q(\Omega))$ since
  $$
  u+\varphi(w_b)\in L_p((0,T);\ol{H}^{a+\frac1q-\eps}_q(\Omega))\cap H^1_p((0,T);L_q(\Omega)),
  $$
  $\tilde{\varphi}\in C^2(\R)$ and by well-known results on composition operators on Sobolev and Bessel potential spaces and
  \begin{align*}
    \partial_t w&= (\tilde{\varphi}^{-1})'(u+\varphi(w_b))\partial_t u = -(\tilde{\varphi}^{-1})'(u+\varphi(w_b))a_0(\cdot, u(\cdot))P(u+\varphi(w_b))= -P(\tilde{\varphi}(w)).
  \end{align*}
  Moreover, since
  \begin{align*}
    &L_p((0,T);\ol{H}^{a+\frac1q-\eps}_q(\Omega))\cap
      H^1_p((0,T);L_q(\Omega))\\
      &\hookrightarrow BUC([0,T];\ol{H}^{(a+\frac1q-\eps)(1-\frac1p)}_q(\Omega))\hookrightarrow C^0([0,T]\times \ol\Omega)
  \end{align*}
  for $\eps>0$ sufficiently small due to \eqref{eq:Ineq1}ff.\
    and $\inf_{x\in \ol\Omega}w_0>\delta$, we can achieve
  $$
  \inf_{x\in \ol\Omega,t\in [0,T]}w(x,t)> \delta/2
  $$
  by choosing $T>0$ sufficiently small. Hence $\tilde{\varphi}(w)= \varphi(w)$. Finally, $\tilde{\varphi}(w)-\varphi(w_b)= u \in L_p((0,T);D_q(\overline{\Omega}))$ by definition. This shows existence of a solution.

  It remains to show uniqueness of the constructed solution $w$. To this end let $\tilde{w}\in L_p((0,T);\ol{H}^{a+\frac1q-\eps}_q(\Omega))\cap H^1_p((0,T);L_q(\Omega))$ with $\varphi(\tilde{w})-\varphi(w_b)\in L_p((0,T);D_q(\overline{\Omega}))$ be another solution of \eqref{eq:NonlinearDiffusion} and consider
  \begin{equation*}
    t_0 := \sup \left\{ T'\in [0,T]\mid  w|_{[0,T']}\equiv \tilde{w}|_{[0,T']}\right\}.
  \end{equation*}
  We show by contradiction that $t_0=T$, which implies the uniqueness. Hence assume $t_0<T$. Since $w,\tilde{w}\in C^0([0,T]\times \ol\Omega)$, we have
  \begin{equation*}
    \inf_{x\in\ol\Omega} \tilde{w}(x,t_0)=\inf_{x\in\ol\Omega} w(x,t_0)> \delta/2.
  \end{equation*}
  Hence there is some $T'\in (t_0,T)$ such that
  \begin{equation*}
    \inf_{x\in\ol\Omega,t\in [t_0,T']} \tilde{w}(x,t)> \delta.
  \end{equation*}
  Therefore $\tilde{u}:= \varphi(\tilde{w})|_{[0,T']}= \tilde{\varphi}(\tilde{w})|_{[0,T']}\in L_p((0,T');D_q(\overline{\Omega}))\cap H^1_p((0,T');L_q(\Omega))$ is a solution of \eqref{eq:NonEq} with $(0,T)$ replaced by $(0,T')$. Since $u|_{[0,T']}$ solves the same system, the improved uniqueness statement of Remark~\ref{rem:Uniqueness} implies that $\tilde{u}|_{[0,T']}= u|_{0,T']}$. This yields $\tilde{w}|_{[0,T']}=w|_{[0,T']}$, which is a contradiction to the definition of $t_0$. Hence $t_0=T$, and uniqueness is shown.
\end{proof}

\begin{example}\label{ex:Porous} 
Choosing $\varphi(w)= w^m$ for $m>1$ in
\eqref{eq:NonlinearDiffusion} yields
a case including
the fractional porous medium equation; in the latter, $P\varphi (w)=(-\Delta )^aw^m$.

The problem with $\varphi $ was  studied e.g.\ in H\"older spaces in the case $\Omega=\R^n$
 and $P=(-\Delta)^a$
by   V\'azques, de~Pablo, Quir\'{o}s and Rodr\'{\i}guez in
   \cite{VPQR17}, which lists  a number of applications including the
   fractional porous medium equation.  Roidos and
Shao obtained maximal $L_p$-regularity results in \cite{RS22} in
cases like $P=(-\nabla\cdot \mathfrak a(x)\nabla)^a$, with $\Omega$  replaced by
a smooth closed $n$-dimensional Riemannian manifold; they applied it
in their Section 6.1
to porous medium equations for $P=(-\Delta )^a$. The present
study achieves these types of results for the first time on domains
$\Omega $
{\em with
boundary}; examples  include $P=L^a$ where $L$ is as in
\eqref{eq:diffop} below.
\end{example}

Corollary \ref{cor:Diffusion} applies moreover to pseudodifferential
operators $P$ satisfying Hypothesis~\ref{hypo:3.1} with  $p(x,\xi
)$  real and vanishing odd-numbered symbol terms $p_{2k+1}, k\in\N_0$,
so that $p(x,-\xi )=p(x,\xi )$; cf.\ Remark \ref{rem:RealMap} below.

For completeness, we give some details on when operators in complex
function spaces map real functions to real functions:

\begin{rem}\label{rem:RealMap}  A
function $u\in\mathcal S(\rn)$ is real if and only if $\hat u(-\xi
)=\overline{\hat u(\xi )}$ for all $\xi \in\rn$. It follows from \eqref{eq:2.8}
that $P=\Op(p)$ maps real functions to real functions if and only if 
$p(x,-\xi) = \overline{p(x,\xi)}$  for all $x,\xi\in \rn$. This gives
one criterion for preserving real functions.

For operators arising
from functional calculus, another criterion may be convenient: When
$A$ is a linear operator in $L_q(\rn ,\C)$  {with $\overline{u}\in D(A)$ for every $u\in D(A)$}, define $\overline A $
by $\overline Au=\overline{A\overline u}$, with $D(\overline
A)=D(A)$. Then $A$ maps real functions to real functions if and only
if $\overline A=A$. Assume this, and let $f$ be  a function on $\C$, holomorphic on
the resolvent set of $A$, satisfying
\begin{equation}
  \label{eq:real}
\overline{f(\lambda )}=f(\overline \lambda ).
\end{equation}
Let the operator
$f(A)$ be defined by a Dunford integral
$f(A)u=\tfrac i{2\pi }\int_{\mathcal C}f(\lambda )(A-\lambda )^{-1}u\, d\lambda$, 
where $\mathcal C$ is a curve encircling the spectrum of $A$ counterclockwise.
Note that from $\overline{(A-\lambda )}=A-\overline\lambda $ follows
$\overline{(A-\lambda )^{-1}}=(A-\overline\lambda )^{-1}$ when $\lambda
$ is in the resolvent set. Hence, in view of \eqref{eq:real},
\begin{align*}
\overline{f(A)}u&=\overline{\tfrac i{2\pi }\int_{\mathcal C}f(\lambda
)(A-\lambda )^{-1}\overline u\, d\lambda }
=\tfrac {-i}{2\pi }\int_{\mathcal C}f(\overline\lambda
)(\overline A-\overline\lambda )^{-1} u\, d \overline\lambda \\
&=\tfrac {i}{2\pi }\int_{\mathcal C'}f(\mu )(\overline A-\mu  )^{-1} u\, d\mu =f(A)u,
\end{align*}
 since
$\overline A=A$ (here $\mathcal C'$ is the curve obtained by conjugation
of $\mathcal C$ and oriented counterclockwise). Thus $f(A)$ preserves real functions.

This can be used for example when  $P=L^a$, where 
\begin{equation}
  \label{eq:diffop}
 Lu= -\sum_{j,k=1}^{n}\partial_ja_{jk}(x)\partial_ku+b(x)u,
\end{equation}
with $(a_{jk}(x))_{1\leq j,k\le n}$ being a real, symmetric,
 $x$-dependent matrix with a positive lower bound for $x\in\rn$, and $b(x)\ge
 0$. $L$ preserves real functions. The fractional
powers $L^a=LL^{a-1}$ ($0<a<1$) can be defined under mild smoothness hypotheses on the coefficients;
then they also preserve real functions. When
all coefficients are in $C^\infty _b(\rn)$, the construction of Seeley \cite{S67}
shows that $P$ has a smooth symbol satisfying Hypothesis \ref{hypo:3.1}. When
coefficients are just $C^\tau $, there is a principal symbol $p_0=\bigl(\sum
a_{jk}(x)\xi _j\xi _k\bigr)^a$ satisfying Hypothesis \ref{hypo:3.1}
, but the remainder term  would need further analysis.
\end{rem}

{\bf Data availability statement} There is no associated data to the
manuscript.

\medskip

{\bf Conflict of interest} There is no conflict of interests. 
\small

\def\cprime{$'$} \def\ocirc#1{\ifmmode\setbox0=\hbox{$#1$}\dimen0=\ht0
  \advance\dimen0 by1pt\rlap{\hbox to\wd0{\hss\raise\dimen0
  \hbox{\hskip.2em$\scriptscriptstyle\circ$}\hss}}#1\else {\accent"17 #1}\fi}

\end{document}